\newtheorem{theorem}{Theorem}
\newtheorem{lemma}[theorem]{Lemma}
\newtheorem{corollary}[theorem]{Corollary}
\newtheorem{proposition}[theorem]{Proposition}
\newtheorem{lettertheorem}{Theorem}
\newtheorem{letterlemma}[lettertheorem]{Lemma}
\theoremstyle{definition}
\theoremstyle{remark}
\numberwithin{equation}{section}
\newcommand{\B}{\mathcal{B}}
\newcommand{\D}{\mathbb{D}}
\newcommand{\DD}{\widehat{\mathcal{D}}}
\newcommand{\N}{\mathbb{N}}
\newcommand{\R}{\mathbb{R}}
\newcommand{\C}{\mathbb{C}}
\newcommand{\e}{\varepsilon}
\renewcommand{\phi}{\varphi}
\newcommand{\T}{\mathbb{T}}
\def\a{\alpha}       \def\b{\beta}        
\def\d{\delta}           \def\e{\varepsilon}
     \def\om{\omega}      
                  \def\z{\zeta}
\def\R{{\mathcal R}}
\def\I{{\mathcal I}}
\DeclareMathOperator{\supp}{supp}
\renewcommand{\H}{\mathcal{H}}
\begin{document}

\title[Two weight inequality for Bergman projection]{Two weight inequality for Bergman projection}

\keywords{Bergman space, reproducing kernel, Bergman projection, Muckenhoupt class, Bekoll\'e-Bonami class, regular weight, rapidly increasing weight}

\thanks{This research was supported in part by the Ram\'on y Cajal program
of MICINN (Spain); by Ministerio de Edu\-ca\-ci\'on y Ciencia, Spain, projects
MTM2011-25502 and MTM2011-26538;  by   La Junta de Andaluc{\'i}a, (FQM210) and
(P09-FQM-4468);  by Academy of Finland project no. 268009,  by V\"ais\"al\"a Foundation of Finnish Academy of Science and Letters, and by Faculty of Science and Forestry of University of Eastern Finland project no. 930349.
}

\date{\today}

\begin{abstract}
The motivation of this paper comes from the two weight inequality $$\|P_\om(f)\|_{L^p_v}\le C\|f\|_{L^p_v},\quad f\in L^p_v,$$ for the Bergman projection $P_\om$ in the unit disc. We show that the boundedness of $P_\om$ on $L^p_v$ is characterized in terms of self-improving Muckenhoupt and Bekoll\'e-Bonami type conditions when the radial weights $v$ and $\om$ admit certain smoothness. En route to the proof we describe the asymptotic behavior of the $L^p$-means and the $L^p_v$-integrability of the reproducing kernels of the weighted Bergman space $A^2_\om$.
\end{abstract}

\author{Jos\'e \'Angel Pel\'aez}
\address{Departamento de An\'alisis Matem\'atico, Universidad de M\'alaga, Campus de
Teatinos, 29071 M\'alaga, Spain} \email{japelaez@uma.es}

\author{Jouni R\"atty\"a}
\address{University of Eastern Finland, P.O.Box 111, 80101 Joensuu, Finland}
\email{jouni.rattya@uef.fi}

\maketitle

\section{Introduction}

Let $A^2_\om$ denote the subspace of analytic functions in $L^2_\om$ induced by a nonnegative integrable function $\om$ on the unit disc $\D$. If the norm convergence in the Bergman space $A^2_\om$ implies the uniform convergence on compact subsets, the Hilbert space $A^2_\om$ is a closed subspace of $L^2_\om$ and the orthogonal Bergman projection $P_\om$ from $L^2_\om$ to $A^2_\om$ is given by
    \begin{equation*}\label{intoper}
    P_\om(f)(z)=\int_{\D}f(\z)\overline{B^\om_{z}(\z)}\,\om(\z)dA(\z),
    \end{equation*}
where $B^\om_{z}$ are the reproducing kernels of $A^2_\om$.

In this paper we are mainly interested in the question of when
    \begin{equation}\label{twoweight}
    \|P_\om(f)\|_{L^p_v}\le C\|f\|_{L^p_v}.
    \end{equation}
To the best of our knowledge, the existing literature does not  offer an answer even in the case where $\om=v$ is radial. It is well-known that the boundedness of projections on $L^p$-spaces is an intriguing topic which has attracted a considerable amount of attention during the last decades. This is not only due to the mathematical difficulties the question raises, but also to its numerous applications in operator theory.
Recently, the bounded projections $P_0:L^2_{|g|^{-2}}\to L^2_{|f|^2}$ were characterized on the way to disprove the Sarason conjecture on the Toeplitz product operator $T_fT_g^\star:A^2\to A^2$, induced by analytic symbols $f$ and $g$~\cite{AlPoRe}. However,
the most commonly known results concerning the two weight inequality \eqref{twoweight} have been obtained when
the inducing weight $\om$ is standard~\cite{BB,B}. In this case the reproducing kernels are given by the neat expression $(1-\overline{z}\z)^{-(2+\alpha)}$ that is easy to work with.
The general situation is much more complicated because of the lack of explicit expressions for $B^\om_z$. Because of this fact, and due to previous studies \cite{CP2,D1,ZeyTams2012} revealing the importance that the decay of the weight plays in the analysis of~\eqref{twoweight}, we will focus on so-called regular and rapidly increasing (radial) weights. Postponing the exact definitions of these weights to the next section, we will denote these classes of weights by~$\R$ (for regular) and~$\I$ (for rapidly increasing).

The techniques employed here to study~\eqref{twoweight} require $L^p$-estimates for the Bergman reproducing kernels~$B^\om_{z}$. The first of the main results describes the asymptotic behavior of the $L^p$-means
of $B^\om_{z}$ (or its derivatives). The latter part of this theorem reveals a precise estimate for the $L^p_v$-integral of~$B^\om_{z}$.
Needless to say that such kernel estimates are frequently applied in the operator theory.

The main result of this study characterizes those regular weights $\om$ and $v$ for which \eqref{twoweight} holds. In particular we show that they coincide with those for which the sublinear operator
    $$
    P^+_\om(f)(z)=\int_{\D}|f(\z)||B^\om_{\z}(z)|\,\om(\z)dA(\z)
    $$
is bounded on $L^p_v$. The characterizing integral condition is equivalent, on one hand, to a Muckenhoupt-type condition related to Hardy operators~\cite{Muckenhoupt1972}, and on the other hand, to a generalization of the classical Bekoll\'e-Bonami condition. In contrast to the general situation for Bekoll\'e-Bonami weights~\cite{BoMathAnn04}, all these conditions are self-improving.

\par As a byproduct, we will show that $P^+_\om$ is bounded on $L^p_\om$ if $\om\in\R$ and $p>1$. The situation is different for $\om\in\I$ because then $P^+_\om$ is not bounded on $L^p_\om$. These results emphasize the general phenomena that many finer function-theoretic properties valid for $A^p_\alpha$ just simply break down for $A^p_\om$ induced by $\om\in\I$.

Throughout the paper  $\frac{1}{p}+\frac{1}{p'}=1$. Further, the letter $C=C(\cdot)$ will denote an
absolute constant whose value depends on the parameters indicated
in the parenthesis, and may change from one occurrence to another.
We will use the notation $a\lesssim b$ if there exists a constant
$C=C(\cdot)>0$ such that $a\le Cb$, and $a\gtrsim b$ is understood
in an analogous manner. In particular, if $a\lesssim b$ and
$a\gtrsim b$, then we will write $a\asymp b$.

\subsection{Background on weights}

Before presenting the main results, we will shortly discuss some classes of radial weights.

A function $\omega:\D\to [0,\infty)$, integrable over the unit disc $\D$, is called a
weight. It is radial if $\omega(z)=\omega(|z|)$ for all $z\in\D$.
We will write $\DD$ for the class of radial weights such that $\widehat{\om}(z)=\int_{|z|}^1\om(s)\,ds$ is doubling, that is, there exists $C=C(\om)\ge1$ such that $\widehat{\om}(r)\le C\widehat{\om}(\frac{1+r}{2})$ for all $0\le r<1$. The following lemma contains basic properties of these weights and will be frequently used in the sequel. The proof is elementary and therefore omitted.

\begin{letterlemma}\label{Lemma:replacement-Lemmas-Memoirs}
Let $\om$ be a radial weight. Then the following conditions are equivalent:
\begin{itemize}
\item[\rm(i)] $\om\in\DD$;
\item[\rm(ii)] There exist $C=C(\om)>0$ and $\b_0=\b_0(\om)>0$ such that
    \begin{equation}\label{Eq:replacement-Lemma1.1}
    \begin{split}
    \widehat{\om}(r)\le C\left(\frac{1-r}{1-t}\right)^{\b}\widehat{\om}(t),\quad 0\le r\le t<1,
    \end{split}
    \end{equation}
for all $\b\ge\b_0$;

\item[\rm(iii)] The asymptotic equality
    $$
    \int_0^1s^x\om(s)\,ds\asymp\widehat{\om}\left(1-\frac1x\right),\quad x\in[1,\infty),
    $$
is valid;

\item[\rm(iv)] $\om^\star(z)\asymp\widehat{\om}(z)(1-|z|)$, $|z|\to1^-$,
where
    $$
    \omega^\star(z)=\int_{|z|}^1\omega(s)\log\frac{s}{|z|}s\,ds,\quad z\in\D\setminus\{0\}.
    $$
\end{itemize}
\end{letterlemma}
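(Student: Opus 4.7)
The plan is to prove the four conditions equivalent by closing the cycle (i) $\Leftrightarrow$ (ii) $\Rightarrow$ (iii), (iv) and then showing (iii) $\Rightarrow$ (i) and (iv) $\Rightarrow$ (i).

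For (i) $\Rightarrow$ (ii), I would iterate the hypothesis along the sequence $r_n = 1 - 2^{-n}(1-r)$, for which $r_{n+1} = (1+r_n)/2$, obtaining $\widehat{\om}(r) \le C^n \widehat{\om}(r_n)$ by induction. Given $t \ge r$, I pick the smallest $n$ with $r_n \ge t$, so that $2^n \asymp (1-r)/(1-t)$; monotonicity of $\widehat{\om}$ together with $C^n = (2^n)^{\log_2 C}$ then yields (ii) with $\b_0 = \log_2 C$ (larger $\b$ is automatic since $(1-r)/(1-t)\ge 1$). The reverse (ii) $\Rightarrow$ (i) is immediate by taking $t=(1+r)/2$.

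For (ii) $\Rightarrow$ (iii), split $\int_0^1 s^x\om(s)\,ds$ at $s_0=1-1/x$. The tail is sandwiched between $(1-1/x)^x\widehat{\om}(1-1/x)\gtrsim \widehat{\om}(1-1/x)$ and $\widehat{\om}(1-1/x)$. For the head, decompose $[0,1-1/x]$ into dyadic rings $E_n=[1-2^{n+1}/x,\,1-2^n/x]$ on which $s^x\le e^{-2^n}$, and use (ii) to bound $\widehat{\om}(1-2^{n+1}/x)\le C\,2^{(n+1)\b_0}\widehat{\om}(1-1/x)$. Summing the super-exponentially convergent series $\sum_n e^{-2^n}2^{n\b_0}$ closes the head estimate. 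For (i) $\Rightarrow$ (iv), the upper bound $\om^\star(z)\lesssim \widehat{\om}(z)(1-|z|)$ follows from $\log(s/|z|)\le (1-|z|)/|z|$; for the matching lower bound one restricts the defining integral of $\om^\star$ to $s\ge (1+|z|)/2$, where $\log(s/|z|)\cdot s\gtrsim (1-|z|)$, and then invokes (i) to absorb $\widehat{\om}((1+|z|)/2)$ into $\widehat{\om}(|z|)$.

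The two reverse implications require more care. For (iv) $\Rightarrow$ (i), I split $\om^\star(r)=\int_r^{s_0}+\int_{s_0}^1$ and bound the two pieces by $\log(s_0/r)(\widehat{\om}(r)-\widehat{\om}(s_0))$ and $\log(1/s_0)\widehat{\om}(s_0)$ respectively. Choosing $s_0$ so that $\log(s_0/r)=\e(1-r)$ with $\e$ a small multiple of the constant $c$ appearing in (iv) and invoking the lower bound in (iv), the resulting inequality reads $\widehat{\om}(r)\le C_\e\widehat{\om}(s_0)$ with $1-s_0=(1-\e)(1-r)$; finitely many iterations move $s_0$ past $(1+r)/2$, yielding (i). The identity $M(x):=\int_0^1 s^x\om(s)\,ds=x\int_0^1 s^{x-1}\widehat{\om}(s)\,ds$ obtained by integration by parts, together with (iii), transfers the problem of (iii) $\Rightarrow$ (i) to bounding $M(x)/M(2x)$; splitting that integral at $1-1/x$ and exploiting the monotonicity of $\widehat{\om}$ together with the comparison $M(x)\asymp\widehat{\om}(1-1/x)$ at both scales $x$ and $2x$ closes the loop.

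The main obstacle is the implication (iii) $\Rightarrow$ (i): unlike (ii) $\Rightarrow$ (iii), where dyadic rings and the trivial super-exponential decay of $s^x$ away from $1$ do all the work, extracting doubling of $\widehat{\om}$ from a single asymptotic identity for its moments is delicate. The integration-by-parts identity is the crucial device here, since it rewrites $M(x)$ as an average of $\widehat{\om}$ itself against the kernel $x s^{x-1}$, which concentrates near $s=1-1/x$ with controlled spread, and this is exactly what is needed to propagate a comparison at the single scale $1-1/x$ to one between the scales $1-1/x$ and $1-1/(2x)$.
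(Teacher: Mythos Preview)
Your outline is sound, and each implication can be made rigorous along the lines you indicate. The paper itself omits the proof of this lemma entirely (``the proof is elementary and therefore omitted''), so there is no argument to compare against; your sketch simply fills that gap.

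Two minor remarks. In (iv)~$\Rightarrow$~(i), the crude bound on the second piece of $\om^\star(r)$ is $\log(1/r)\,\widehat{\om}(s_0)$ rather than $\log(1/s_0)\,\widehat{\om}(s_0)$, but after combining with the $-\log(s_0/r)\,\widehat{\om}(s_0)$ coming from the first piece one indeed obtains $\log(1/s_0)\,\widehat{\om}(s_0)$, so your displayed inequality is correct after regrouping. In (iii)~$\Rightarrow$~(i), the integration-by-parts identity is not actually needed: the trivial tail estimate already gives
\[
M(2x)\ge\int_{1-1/x}^1 s^{2x}\om(s)\,ds\ge\left(1-\tfrac1x\right)^{2x}\widehat{\om}\!\left(1-\tfrac1x\right)\gtrsim\widehat{\om}\!\left(1-\tfrac1x\right),
\]
and the upper bound in (iii) at scale $2x$ gives $M(2x)\lesssim\widehat{\om}(1-1/(2x))$; combining these yields (i) directly. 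So the step you flagged as ``the main obstacle'' is in fact no harder than the others.
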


Each radial weight $\om$ is closely related to its associated weight $\omega^\star$ by the Littlewood-Paley identity
    \begin{equation}\label{LP1}
    \|f\|^2_{A^2_\om}=4\|f'\|^2_{A^2_{\om^\star}}+\om(\D)|f(0)|^2,
    \end{equation}
which is a special case of a more general formula~\cite[Theorem~4.2]{PelRat}.

We call a radial weight $\om$ regular, denoted by $\om\in\R$, if $\om\in\DD$
and  $\om(r)$ behaves as its integral average over $(r,1)$, that is,
    \begin{equation*}
    \om(r)\asymp\frac{\int_r^1\om(s)\,ds}{1-r},\quad 0\le r<1.
    \end{equation*}
It is clear that  $\om\in\R$ if and only if for each $s\in[0,1)$ there exists a
constant $C=C(s,\omega)>1$ such that
    \begin{equation}\label{eq:r2}
    C^{-1}\om(t)\le \om(r)\le C\om(t),\quad 0\le r\le t\le
    r+s(1-r)<1,
    \end{equation}
and
\begin{equation*}
   \frac{\int_r^1\om(s)\,ds}{1-r}\lesssim \om(r),\quad0\le r<1.
    \end{equation*}
The definition of regular weights used here is slightly more general than that in \cite{PelRat}, but the principal properties of weights in these classes are essentially the same by Lemma~\ref{Lemma:replacement-Lemmas-Memoirs} and \cite[Chapter~1]{PelRat}. A radial continuous weight $\om$ is called rapidly increasing, denoted by $\om\in\I$, if
    \begin{equation*}
    \lim_{r\to 1^-}\frac{\int_r^1\om(s)\,ds }{\om(r)(1-r)}=\infty.
    \end{equation*}
It follows from \cite[Lemma~1.1]{PelRat} that $\I\subset\DD$. For further information on these classes, see~\cite[Chapter~1]{PelRat} and the references therein.

\section{Main results}

Let $\H(\D)$ denote the algebra of all analytic functions in the unit disc $\D=\{z:|z|<1\}$.
If $0<r<1\,$ and $f\in \H (\D)$, set
    \begin{equation*}
    \begin{split}
    M_p(r,f)&=\left(\frac{1}{2\pi}\int_{0}^{2\pi} |f(re^{it})|^p\,dt\right)^{1/p},\quad
    0<p<\infty,\\
    M_\infty(r,f)&=\sup_{|z|=r}|f(z)|.
    \end{split}
    \end{equation*}
For $0<p\le \infty $, the Hardy space $H^p$ consists of functions
$f\in \H(\mathbb D)$ such that $\Vert f\Vert _{H^p}=
\sup_{0<r<1}M_p(r,f)<\infty$. For
$0<p<\infty$ and a weight $\omega$, the weighted Bergman
space $A^p_\omega$ is the space of  $f\in\H(\D)$ for
which
    $$
    \|f\|_{A^p_\omega}^p=\int_\D|f(z)|^p\omega(z)\,dA(z)<\infty,
    $$
where $dA(z)=\frac{dx\,dy}{\pi}$ is the normalized
Lebesgue area measure on $\D$. As usual, we write~$A^p_\alpha$ for the classical weighted
Bergman space induced by
the standard radial weight $\omega(z)=(1-|z|^2)^\alpha$, where
$-1<\alpha<\infty$.

If $\om\in\DD$, the norm convergence in $A^2_\om$ implies the uniform convergence on compact subsets of $\D$, and therefore $A^2_\om$ is a closed subspace of $L^2_\om$. In particular, each point evaluation $L_ a(f)=f(a)$ is a bounded linear
functional on $A^2_\om$, and hence there exist unique reproducing kernels $B^\omega_a\in
A^2_\om$ such that $\|L_ a\|=\|B^\omega_
a\|_{A^2_\om}$ and
    $$
    f(a)=\langle f, B^\omega_ a\rangle_{A^2_\om} =\int_{\D}
    f(z)\,\overline{B^\omega_a(z)}\,\om(z)\,dA(z),\quad f\in A^2_\om.
    $$

When a closed formula for the Bergman kernel $B^\om_a$ exists, then the asymptotic growth of its $L^p$-means can be determined. For example, if the inducing weight is $\om(z)=(1-|z|^2)^\a$, then an appropriate interpretation of the well-known $L^p$-estimate allows us to write
    \begin{equation*}
    \begin{split}
    M_p^p\left(r,B^\om_a\right)\asymp\int_0^{|a|r}\frac{dt}{(1-t)^{(2+\a)p}}
    \asymp\int_0^{|a|r}\frac{dt}{\widehat{\om}(t)^p(1-t)^{p}},\quad r,|a|\to1^-,
    \end{split}
    \end{equation*}
and, for $v(z)=(1-|z|^2)^{\b}$, we therefore have
    \begin{equation*}
    \begin{split}
    \|B^\om_a\|^p_{A^p_v}
    \asymp\int_0^1 (1-r)^\b\left(\int_0^{|a|r}\frac{dt}{(1-t)^{(2+\a)p}}\right)\,dr
    \asymp\int_0^{|a|}\frac{\widehat{v}(r)}{\widehat{\om}(r)^p(1-r)^p}\,dr,\quad |a|\to1^-.
    \end{split}
    \end{equation*}
This last one is a standard Bergman kernel estimate in the unit disc, attributed to Forelli and Rudin~\cite{ForRud74Ind}, that is usually written in a slightly different form~\cite{HKZ,Zhu}. The $L^p$-behavior of the kernel $B^\om_a$ can be controlled in terms of off-diagonal pointwise estimates if the inducing radial weight tends to zero at least exponentially as $|z|\to 1^-$ \cite{ Asserda-Hichame2014,ArPau,Scuster-Varolin2013}.

Our first result shows that the discussion above regarding standard weights actually describes a general phenomenon rather than a particular case.

\begin{theorem}\label{th:kernelstimate}
Let $0<p<\infty$, $\om\in\DD$ and $N\in\N\cup\{0\}$. Then the following assertions hold:
\begin{enumerate}
\item[\rm(i)]$\displaystyle M_p^p\left(r,\left(B^\om_a\right)^{(N)}\right)\asymp
    \int_0^{|a|r}\frac{dt}{\widehat{\om}(t)^{p}(1-t)^{p(N+1)}},\quad r,|a|\to1^-.$
\item[\rm(ii)] If $v\in\DD$, then
    \begin{equation}\label{k1}
    \|\left(B^\om_a\right)^{(N)}\|^p_{A^p_v}
    \asymp \int_0^{|a|}\frac{\widehat{v}(t)}{\widehat{\om}(t)^{p}(1-t)^{p(N+1)}}\,dt,\quad |a|\to1^-.
    \end{equation}
\end{enumerate}
\end{theorem}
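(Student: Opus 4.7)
The plan is to prove (i) first and then derive (ii) via Fubini's theorem. For (i), the rotation invariance of $M_p$ and the conjugate symmetry $B^\om_a(z)=\overline{B^\om_{\overline a}(\overline z)}$ allow us to assume $a\in[0,1)$. The radial character of $\om$ makes $\{z^n\}_{n\ge 0}$ a complete orthogonal system of $A^2_\om$ with $\nm{z^n}^2_{A^2_\om}=2\om_{2n+1}$, where $\om_x=\int_0^1 s^x\om(s)\,ds$, so that
$$
(B^\om_a)^{(N)}(z)=\sum_{n\ge N}\frac{n!/(n-N)!}{2\om_{2n+1}}\,a^n z^{n-N},
$$
a power series with non-negative Taylor coefficients. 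Lemma~\ref{Lemma:replacement-Lemmas-Memoirs}(iii) then gives $\om_{2n+1}\asymp\widehat{\om}(1-1/(n+1))$, so the $n$-th coefficient is comparable to $n^N/\widehat{\om}(1-1/n)$.

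The core technical step is the $M_p^p$-mean of this positive-coefficient series. I would use a dyadic block decomposition: for $k\ge 0$, set $I_k=[2^k,2^{k+1})$ and let $F_k$ be the corresponding partial sum. Since $\widehat{\om}\in\DD$, Lemma~\ref{Lemma:replacement-Lemmas-Memoirs}(ii) shows that the coefficient sequence is essentially constant, equal to $2^{kN}/\widehat{\om}(1-2^{-k})$, on each block. Each $F_k$ is therefore, up to normalization, a shifted Dirichlet-type polynomial with spectrum in $I_k$, and the standard circular $L^p$-estimates for such polynomials yield
$$
M_p^p(r,F_k)\asymp \frac{2^{k(Np+p-1)}(ar)^{2^kp}}{\widehat{\om}(1-2^{-k})^p},\qquad 2^k\lesssim \frac{1}{1-ar},
$$
while $M_p^p(r,F_k)$ is suppressed by the factor $(ar)^{2^kp}$ for larger $k$. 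The lacunary (disjoint dyadic) spectra of the $F_k$'s permit one to show, either via a Littlewood--Paley square function for $1<p<\infty$ or by isolating the dominant block at $2^{k^*}\asymp (1-ar)^{-1}$ and controlling the remainder, that
$$
M_p^p\bigl(r,(B^\om_a)^{(N)}\bigr)\asymp \sum_{2^k\le \frac{1}{1-ar}}\frac{2^{k(Np+p-1)}(ar)^{2^kp}}{\widehat{\om}(1-2^{-k})^p}.
$$
A dyadic-to-integral comparison, again relying on the doubling of $\widehat{\om}$, equates this discrete sum with $\int_0^{ar}\widehat{\om}(t)^{-p}(1-t)^{-p(N+1)}\,dt$, establishing (i).

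For (ii), integrating (i) against $v(r)r\,dr$ and applying Fubini gives
$$
\nm{(B^\om_a)^{(N)}}^p_{A^p_v}\asymp\int_0^{|a|}\frac{1}{\widehat{\om}(t)^p(1-t)^{p(N+1)}}\int_{t/|a|}^1 v(r)r\,dr\,dt,
$$
and since $v\in\DD$, Lemma~\ref{Lemma:replacement-Lemmas-Memoirs} yields $\int_{t/|a|}^1 v(r)r\,dr\asymp\widehat{v}(t/|a|)\asymp\widehat{v}(t)$ as $|a|\to 1^-$, which yields~\eqref{k1}; the second equivalence is uniform after restricting to the bulk of the integration range, where the doubling estimate \eqref{Eq:replacement-Lemma1.1} controls the ratio $\widehat{v}(t)/\widehat{v}(t/|a|)$.

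The principal obstacle is the blockwise $L^p$-estimate together with the matching lower bound $M_p^p(r,F)\gtrsim\sum_k M_p^p(r,F_k)$ when $p\ne 2$. The upper bound follows from the triangle inequality (or its $p\le 1$ analogue), whereas the lower bound requires exploiting the lacunarity of the spectra and, for $0<p\le 1$, a direct test-function argument based on $|F_k(re^{i\theta})|\asymp F_k(r)$ on an arc of length $\asymp 2^{-k}$. Keeping all constants uniform in $a$ and $r$ by means of the doubling property of $\widehat{\om}$, so that the discrete and continuous estimates match up without logarithmic losses, is the most delicate part of the argument.
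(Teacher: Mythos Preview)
Your overall architecture---prove (i) directly via a dyadic block decomposition and then deduce (ii) by Fubini---is more direct than the paper's, and the passage from (i) to (ii) you sketch is exactly what the paper does in its final step. The difference, and the difficulty, lies entirely in your treatment of (i).

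The weak point is the claimed two-sided estimate
\[
M_p^p\bigl(r,(B^\om_a)^{(N)}\bigr)\asymp\sum_k M_p^p(r,F_k),\qquad 0<p<\infty.
\]
Littlewood--Paley theory for $1<p<\infty$ gives $M_p(r,F)\asymp M_p\bigl(r,(\sum_k|F_k|^2)^{1/2}\bigr)$, not the $\ell^p$-sum on the right; interchanging the $L^p$ and $\ell^2$ norms works in only one direction depending on whether $p\le 2$ or $p\ge 2$, so the equivalence you state does not follow. For sharp dyadic cutoffs and $0<p\le1$ the situation is worse: the upper bound is subadditivity, but the lower bound $M_p^p(r,F)\gtrsim M_p^p(r,F_{k^*})$ is not automatic, and your ``test-function'' hint (positivity forces $\mathrm{Re}\,F\gtrsim F_{k^*}(r)$ on a suitable arc) would have to be developed carefully, including control of the tail $\sum_{j>k^*}F_j$, to produce a full proof. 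As written, this is the missing idea.

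The paper avoids this obstacle by a deliberate detour. One half of (i) (lower bound for $p\le2$, upper bound for $p\ge2$) comes cheaply from the Hardy--Littlewood coefficient inequalities applied to the explicit power series, reducing matters to the sum $\sum_n r^{pn}(n+1)^{(N+1)p-2}\om_n^{-p}$, which is estimated directly (Lemma~\ref{HL}). For the other half the paper first proves (ii) in the special case $v\in\R$ continuous, using a genuine Littlewood--Paley formula for $A^p_v$, weight-adapted (not dyadic) blocks $\Delta_n^v$, the norm decomposition of \cite[Theorem~4]{PelRathg}, and multiplier lemmas (Lemmas~\ref{le:A} and \ref{le:decre}) to peel off the factors $\om_j^{-1}$. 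Then (i) is recovered from this case by choosing $v(z)=(1-|z|)^{p-1}$ and invoking the classical embeddings $\mathcal{D}^p_{p-1}\subset H^p$ for $p<2$ and $H^p\subset\mathcal{D}^p_{p-1}$ for $p>2$. Only then is (ii) extended to all $v\in\DD$ via Fubini, exactly as you do.

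In short: your plan is natural and your identification of the obstacle is correct, but the step you flag as ``the most delicate part'' is genuinely incomplete, and the paper's substantially longer argument exists precisely to sidestep it.
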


It is clear by the proof that the asymptotic inequality $\lesssim$ in \eqref{k1} is actually valid for any radial weight $v$, see \eqref{anyradial} below. The following consequence of Theorem~\ref{th:kernelstimate} is often more useful than the theorem itself.

\begin{corollary}\label{co:ernelstimaten}
Let $0<p<\infty$, $\om\in\DD$  and $N\in\N\cup\{0\}$. Then the following assertions hold.
\begin{enumerate}
\item[\rm(i)]
    $\displaystyle
    M_p^p\left(r,\left(B^\om_a\right)^{(N)}\right)\asymp \frac{1}{\widehat{\om}(ar)^{p}(1-|a|r)^{p(N+1)-1}},\quad r, |a|\to1^-,
    $\\
if and only if
    \begin{equation}\label{Eq:hypothesis-kernelmeans}
    \int_0^{|a|}\frac{dt}{\widehat{\om}(t)^{p}(1-t)^{p(N+1)}}\lesssim\frac{1}{\widehat{\om}(a)^{p}(1-|a|)^{p(N+1)-1}},\quad |a|\to1^-.
    \end{equation}
\item[\rm(ii)] If $v\in\DD$, then
\begin{equation}\label{kn4}
    \|\left(B^\om_a\right)^{(N)}\|^p_{A^p_v}\asymp\frac{\widehat{v}(a)}{\widehat{\om}(a)^{p}(1-|a|)^{p(N+1)-1}},\quad |a|\to1^-,
    \end{equation}
    if and only if
    \begin{equation}\label{Eq:hypothesis-kernel}
    \int_0^r\frac{\widehat{v}(t)}{\widehat{\om}(t)^{p}(1-t)^{p(N+1)}}\,dt\lesssim\frac{\widehat{v}(r)}{\widehat{\om}(r)^{p}(1-r)^{p(N+1)-1}},\quad r\to1^-.
    \end{equation}
\end{enumerate}
\end{corollary}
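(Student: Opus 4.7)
The approach is to extract both parts of the corollary directly from Theorem~\ref{th:kernelstimate} by observing that, under the standing assumption $\om\in\DD$ (and $v\in\DD$ in part (ii)), the integrals produced in that theorem are automatically bounded from below by the explicit quantities appearing in \eqref{kn4} and its mean-analogue. Once this automatic lower bound is in hand, the asymptotic equivalence claimed in each item collapses to the matching upper bound, which is exactly \eqref{Eq:hypothesis-kernelmeans} or \eqref{Eq:hypothesis-kernel}.

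To obtain the automatic lower bound for (i), I would restrict the integration in Theorem~\ref{th:kernelstimate}(i) to the subinterval $[2s-1,s]$ with $s=|a|r$ close to $1$. On that interval $1-t\asymp 1-s$ by inspection, while $\widehat{\om}(t)\asymp \widehat{\om}(s)$ because $\widehat{\om}$ is decreasing and the doubling hypothesis $\om\in\DD$ gives $\widehat{\om}(2s-1)\le C\,\widehat{\om}(s)$. Since this interval has length $1-s$, one concludes
\[
\int_0^{s}\frac{dt}{\widehat{\om}(t)^{p}(1-t)^{p(N+1)}}\ge \int_{2s-1}^{s}\frac{dt}{\widehat{\om}(t)^{p}(1-t)^{p(N+1)}}\asymp \frac{1}{\widehat{\om}(s)^{p}(1-s)^{p(N+1)-1}}.
\]
Combined with Theorem~\ref{th:kernelstimate}(i), this shows that the asymptotic equality in (i) is equivalent to the reverse inequality being valid as $s=|a|r\to 1^-$. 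Since any $s$ close to $1$ may be written as $s=|a|r$ with $r,|a|$ close to $1$ (e.g.\ $r=|a|=\sqrt{s}$), and since conversely $|a|r\to 1^-$ whenever $r,|a|\to 1^-$, this upper bound is precisely \eqref{Eq:hypothesis-kernelmeans}.

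Part (ii) is handled in exactly the same fashion. Using $v\in\DD$, the doubling of $\widehat{v}$ additionally yields $\widehat{v}(t)\asymp \widehat{v}(r)$ on $[2r-1,r]$, and the same length-versus-size argument produces the automatic lower bound
\[
\int_0^{r}\frac{\widehat{v}(t)}{\widehat{\om}(t)^{p}(1-t)^{p(N+1)}}\,dt\gtrsim \frac{\widehat{v}(r)}{\widehat{\om}(r)^{p}(1-r)^{p(N+1)-1}}.
\]
In view of \eqref{k1}, the asymptotic identity \eqref{kn4} is then equivalent to the matching upper bound, which is \eqref{Eq:hypothesis-kernel}.

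There is no real obstacle here, since the substantive content has already been carried out in Theorem~\ref{th:kernelstimate}. The only point requiring any care is verifying that the automatic lower bound for the integral follows solely from the doubling of $\widehat{\om}$ and $\widehat{v}$, and for this it is essential to choose the integration subinterval of length comparable to $1-s$ on which both $\widehat{\om}(t)$ (respectively $\widehat{v}(t)$) and $1-t$ remain comparable to their values at the endpoint.
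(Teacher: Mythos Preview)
Your proposal is correct and follows essentially the same approach as the paper: the paper's proof also invokes Theorem~\ref{th:kernelstimate} together with the lower-bound estimates \eqref{bi} and \eqref{domi}, which are precisely the ``automatic lower bounds'' you obtain by restricting the integral to an interval of length comparable to $1-s$ and using the doubling of $\widehat\om$ (and $\widehat v$). The only cosmetic difference is the choice of subinterval---the paper uses $[\tfrac{4r-1}{3},r]$ in \eqref{bi} whereas you use $[2s-1,s]$---but the argument is the same.
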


There are two instances in the recent literature where the Bergman kernel $B^\om_a$ induced by a standard weight is estimated. In the first one $v$ is assumed to be related to the classical Bekoll\'e-Bonami weights~\cite[Lemma~2.1]{AlCo}, and in the second one $v\in\I\cup\R$~\cite[Lemma~2.3(a)]{PelRat}.

The proof of Theorem~\ref{th:kernelstimate} consists of several steps. First, we deduce the upper bound in (i) for $p>2$ (and the lower bound for $p<2$) by using Hardy-Littlewood inequalities. These estimates could then be used to establish the corresponding cases in (ii), but in order to give a more uniform treatment, we will argue differently. Indeed, as the second step, we will prove (ii) for $v\in\R$ continuous by using a Littlewood-Paley theorem (to boost the order of differentiation), decomposition norm estimates for $A^p_v$ with a precise control (induced by the regularity of $v$) over the size of the blocks, results on smooth polynomials related to Hadamard products, and the lower bound for $p\le1$ in (i). As the third step,
we will apply (ii) for $v(z)=(1-|z|)^{p-1}$ and classical embeddings to obtain~(i). The final step is to deduce (ii) for $v\in\DD$ from (i).

Once we get Theorem~\ref{th:kernelstimate} we turn to consider the Bergman projection $P_\om$ acting on an $L^p$-space that is induced by a different weight than the kernel itself. This leads us to study the two weight inequality \eqref{twoweight}.
Bekoll\'e and Bonami described the weights (not necessarily radial) such that the Bergman projection
    \begin{equation*}
    P_\a(f)(z)=(\a+1)\int_\D \frac{f(\z)(1-|\z|^2)^\a}{(1-z\overline{\z})^{2+\a}}\,dA(\z),\quad \a>-1,
    \end{equation*}
induced by the standard weight $\om(z)=(\a+1)(1-|z|^2)^\a$, is bounded on $L^p_v$ for $p>1$~\cite{BB,B}.
They also showed that these weights are exactly those for which the sublinear operator
    $$
    P^+_\a(f)(z)=(\a+1)\int_\D \frac{|f(\z)|(1-|\z|^2)^\a}{|1-z\overline{\z}|^{2+\a}}\,dA(\z)
    $$
is bounded on $L^p_v$. It is worth mentioning that even if the Bekoll\'e-Bonami weights are a kind of analogue of the Muckenhoupt class, these classes have significant differences~\cite{BoMathAnn04}.

The next theorem is the main result of this paper.

\begin{theorem}\label{theorem:projections2}
Let $1<p<\infty$ and $\om,v\in\R$. Then the following conditions are equivalent:
\begin{enumerate}
\item[\rm(a)] $P^+_\om:L^p_v\to L^p_v$ is bounded;
\item[\rm(b)] $P_\om:L^p_v\to L^p_v$ is bounded;
\item[\rm(c)]  $\left(\frac{\om}{v}\right)^{p'}\,v$ is a regular weight.
\end{enumerate}
\end{theorem}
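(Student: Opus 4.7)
The plan is to close the cycle $(a)\Rightarrow(b)\Rightarrow(c)\Rightarrow(a)$. The implication $(a)\Rightarrow(b)$ is immediate from the pointwise bound $|P_\om f(z)|\le P^+_\om(|f|)(z)$.

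For $(b)\Rightarrow(c)$, the strategy is standard testing. For $a\in\D$ close to $\partial\D$, take $f_a(\zeta) = B^\om_a(\zeta)\,|B^\om_a(\zeta)|^{p'-2}v(\zeta)^{1-p'}$ (suitably truncated so it lies in $L^p_v$). Then $\|f_a\|_{L^p_v}^p\asymp \int|B^\om_a|^{p'}v^{1-p'}\,dA$ and $P_\om f_a(a) = \int|B^\om_a|^{p'}v^{1-p'}\om\,dA > 0$. Combined with the standard subharmonicity bound $|P_\om f_a(a)|^p v(a)(1-|a|)^2\lesssim\|P_\om f_a\|_{L^p_v}^p$ for the analytic function $P_\om f_a$, and using Theorem~\ref{th:kernelstimate}(ii) together with the regularity of $\om, v$ to compute both integrals, the boundedness hypothesis $(b)$ yields the integral condition~\eqref{Eq:hypothesis-kernel} of Corollary~\ref{co:ernelstimaten}(ii) for $W:=(\om/v)^{p'}v$, which is equivalent to $W\in\R$.

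For $(c)\Rightarrow(a)$ I would apply Schur's test on the positive kernel $K(z,\zeta)=|B^\om_z(\zeta)|\om(\zeta)$ acting on $L^p_v$. After the standard reduction, one needs a positive radial $\psi$ with
\[
\int K(z,\zeta)\psi(\zeta)^{p'}\,dA(\zeta)\lesssim \psi(z)^{p'}\quad\text{and}\quad \int K(z,\zeta)\psi(z)^p v(z)\,dA(z)\lesssim \psi(\zeta)^p\frac{v(\zeta)}{\om(\zeta)}.
\]
A natural candidate is $\psi=\widehat\om^{\alpha}\widehat v^{\beta}$ with $(\alpha,\beta)$ tuned so that $\psi^{p'}\om$ and $\psi^p v$ combine with the kernel to produce powers of $\widehat W$. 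Angular integration and Corollary~\ref{co:ernelstimaten}(i) with $p=1,N=0$ give $M_1(r,B^\om_z)\asymp 1/\widehat\om(|z|r)$ for $\om\in\R$ (the hypothesis of the corollary is easily checked by the change of variable $u=\widehat\om(s)$ using $(1-s)\om(s)\asymp\widehat\om(s)$). The doubling identity $\widehat\om(|z|r)\asymp\widehat\om(\min(|z|,r))$ splits each Schur integral into two one-dimensional Hardy-type pieces, controlled exactly by the regularity of $W$ via Lemma~\ref{Lemma:replacement-Lemmas-Memoirs} applied to $W$, combined with the identity $W(r)(1-r)\asymp\widehat\om(r)^{p'}\widehat v(r)^{1-p'}$.

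The main obstacle is the precise balancing of the exponents $(\alpha,\beta)$ in Schur's test function so that both inequalities are driven by the regularity of $W$---not merely by the regularity of $\om$ and $v$ separately. Once this balance is identified, the remaining estimates are routine Fubini-plus-doubling computations; the analogous balancing difficulty arises in the choice of test functions for $(b)\Rightarrow(c)$.
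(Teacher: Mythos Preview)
Your Schur-test plan for $(c)\Rightarrow(a)$ is the paper's route, though the paper does not use a power ansatz $\widehat\om^\alpha\widehat v^\beta$: it takes $h=v^{1/p}\,\widehat W^{1/(pp')}$, so that $(\om/h)^{p'}=W\widehat W^{-1/p}$ and $\int_t^1(\om/h)^{p'}\,ds$ telescopes to $p'\,\widehat W(t)^{1/p'}$. This collapses the first Schur inequality to hypothesis~(c) in one line, and a Bernoulli--l'H\^opital argument then disposes of the second.

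The step $(b)\Rightarrow(c)$, however, has a genuine gap, and not merely the integrability issue you flag (indeed $v^{1-p'}$ need not be a weight, e.g.\ $v(r)=(1-r)^b$ with $b\ge p-1$, so Theorem~\ref{th:kernelstimate}(ii) does not apply to it). The deeper problem is that even when everything converges the resulting inequality is vacuous. By duality your test recovers exactly the bound $\|B^\om_a\|_{L^{p'}_W}\lesssim(\widehat v(a)(1-|a|))^{-1/p}$; for standard weights $\om=(1-r)^\alpha$, $v=(1-r)^\beta$ a direct Forelli--Rudin computation shows both sides of $I_2^p\,\widehat v(a)(1-|a|)\lesssim I_1$ carry the \emph{same} power of $1-|a|$, so no relation between $\alpha$ and $\beta$ is extracted. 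This is structural: your $f_a$ is the extremizer of the functional $f\mapsto P_\om f(a)$ on $L^p_v$, while the subharmonicity bound is the point-evaluation norm on $A^p_v$, so combining them merely reproduces that norm and gains nothing from the boundedness of $P_\om$. The paper instead passes to the adjoint $P^\star_\om$ (with respect to $\langle\cdot,\cdot\rangle_{L^2_v}$) and tests it on the monomials $g_n(z)=z^n$: since $P^\star_\om(g_n)=(v_n/\om_n)(\om/v)\,\zeta^n$, boundedness on $L^{p'}_v$ gives $(v_n/\om_n)^{p'}\int_0^1 s^{np'+1}W(s)\,ds\lesssim v_{np'/2}$, and the moment asymptotics of Lemma~\ref{Lemma:replacement-Lemmas-Memoirs}(iii) convert this directly into the Bekoll\'e--Bonami supremum, i.e.\ $W\in\R$.
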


To prove Theorem~\ref{theorem:projections2}, we will first use the boundedness of the adjoint of $P_\om$, with the monomials as test functions, to see that
    \begin{equation}\label{Eq:BB-intro}
   \sup_{0<r<1} \frac{\widehat{v}(r)^{\frac{1}{p}}
    \left(\int_r^1\left(\frac{\om(s)}{v(s)}\right)^{p'}\,v(s)ds\right)^{\frac{1}{p'}}
    }{\widehat\om(r)}<\infty,
     \end{equation}
that is, the integrand is a regular weight. If $\om(z)=(1-|z|^2)^\a$, then this is the same as saying that the radial weight $\frac{v}{\om}$ satisfies the corresponding Bekoll\'e-Bonami condition. The more involved part of the proof is to show that \eqref{Eq:BB-intro} is also a sufficient condition, and that will be achieved by using an instance of Schur's test and Theorem~\ref{th:kernelstimate}. Further, we will show that \eqref{Eq:BB-intro} is equivalent to the Muckenhoupt-type condition on Hardy operators
    \begin{equation}\label{Eq:Muckenhoupt-intro}
    \displaystyle \sup_{0<r<1}\left(\int_0^r\frac{v(s)}{\om(s)^p(1-s)^p}\,ds\right)^{\frac{1}{p}}
    \left(\int_r^1\left(\frac{\om(s)}{v(s)}\right)^{p'}\,v(s)ds\right)^{\frac{1}{p'}}<\infty,
    \end{equation}
which coincides with
    \begin{equation}\label{Eq:improving-intro}
   \sup_{0<r<1} \frac{\widehat{\om}(r)^p}{\widehat{v}(r)}\int_0^r  \frac{\widehat{v}(s)}{\widehat{\om}(s)^p (1-s)}\,ds<\infty.
    \end{equation}
The condition \eqref{Eq:Muckenhoupt-intro} follows also directly by the boundedness of $P^+_\om$. Therefore several equivalent integral conditions characterize the boundedness of $P_\om$ on $L^p_v$ when $1<p<\infty$. The condition \eqref{Eq:improving-intro}, as well as all the others, is self-improving in the sense that if it is satisfied for some $p$, then it is also satisfied when $p$ is replaced by $p-\d$, where $\d>0$ is sufficiently small, see Lemma~\ref{Lemma:self-improving} below. Recall that the Bekoll\'e-Bonami condition is not self-improving in general \cite{BoMathAnn04}.

It is worth noticing that \eqref{Eq:improving-intro} makes sense also for $p=1$, and it turns out to be the right condition for describing those regular weights such that $P_\om$ is bounded on $L^1_v$.

\begin{theorem}\label{theorem:projections5}
Let $\om,v\in\R$. Then the following conditions are equivalent:
\begin{itemize}
\item[\rm(a)] $P_\om:L^1_v\to L^1_v$ is bounded;
\item[\rm(b)] $P^+_\om:L^1_v\to L^1_v$ is bounded;
\item[\rm(c)] $\displaystyle \sup_{0<r<1}\frac{\om(r)}{v(r)}\int_0^r\frac{\widehat{v}(s)}{\widehat{\om}(s)(1-s)}\,ds<\infty$;
\item[\rm(d)]$\displaystyle   \sup_{0<r<1}\frac{\widehat{v}(r)}{\widehat{\om}(r)}\int_r^1\frac{\om(s)}{v(s)(1-s)}\,ds<\infty$.
\end{itemize}
\end{theorem}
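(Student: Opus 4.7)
The plan is to close the cycle (b)$\Rightarrow$(a)$\Rightarrow$(c)$\Rightarrow$(b) and then establish the equivalence (c)$\Leftrightarrow$(d) as a purely weight-theoretic statement. The first implication is immediate from the pointwise bound $|P_\om f(z)|\le P^+_\om(|f|)(z)$.

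For (a)$\Rightarrow$(c) I would appeal to duality. With the pairing $\langle f,g\rangle_v=\int_\D f\overline{g}\,v\,dA$ one has $(L^1_v)^*\cong L^\infty$, and Fubini identifies the adjoint of $P_\om$ as
\[
P_\om^*(g)(\z)=\frac{\om(\z)}{v(\z)}\int_\D B^\om_z(\z)\,g(z)\,v(z)\,dA(z),\qquad g\in L^\infty.
\]
For each $a\in\D$, the unimodular test function $g_a(z)=B^\om_a(z)/|B^\om_a(z)|$ (defined a.e.) satisfies $B^\om_z(a)\,g_a(z)=|B^\om_a(z)|$ by conjugate symmetry of the kernel, and therefore $P_\om^*(g_a)(a)=\frac{\om(a)}{v(a)}\|B^\om_a\|_{A^1_v}$. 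Since boundedness of $P_\om$ on $L^1_v$ is equivalent to that of $P_\om^*$ on $L^\infty$, it forces $\sup_{a\in\D}\frac{\om(a)}{v(a)}\|B^\om_a\|_{A^1_v}<\infty$; Theorem~\ref{th:kernelstimate}(ii) applied with $N=0$, $p=1$ and $v\in\R\subset\DD$ converts this into exactly (c). Conversely, for (c)$\Rightarrow$(b), Fubini and the same kernel estimate give
\[
\|P^+_\om f\|_{L^1_v}=\int_\D|f(\z)|\,\om(\z)\,\|B^\om_\z\|_{A^1_v}\,dA(\z)\asymp\int_\D|f(\z)|\,\frac{\om(\z)}{v(\z)}\left(\int_0^{|\z|}\frac{\widehat v(t)\,dt}{\widehat\om(t)(1-t)}\right)v(\z)\,dA(\z),
\]
and (c) bounds the bracketed factor uniformly, yielding $\|P^+_\om f\|_{L^1_v}\lesssim\|f\|_{L^1_v}$.

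For (c)$\Leftrightarrow$(d), the regularity identities $\om(r)\asymp\widehat\om(r)/(1-r)$ and $v(r)\asymp\widehat v(r)/(1-r)$ yield $\om/v\asymp\widehat\om/\widehat v$, so both conditions become Hardy-type statements on the single function $\phi(s)=\widehat v(s)/\widehat\om(s)$: namely, $\sup_r\phi(r)^{-1}\int_0^r\phi(s)(1-s)^{-1}\,ds<\infty$ and $\sup_r\phi(r)\int_r^1\phi(s)^{-1}(1-s)^{-1}\,ds<\infty$, respectively. The doubling of $\widehat\om$ and $\widehat v$ supplied by Lemma~\ref{Lemma:replacement-Lemmas-Memoirs} transfers to $\phi$, yielding $\phi(r)\asymp\phi(s)$ whenever $1-r\asymp 1-s$. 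Under this regularity, (c) is equivalent to the differential inequality $\psi(r)\lesssim(1-r)\psi'(r)$ for $\psi(r)=\int_0^r\phi(s)/(1-s)\,ds$; integration gives $\phi(s)\gtrsim\phi(r)\bigl((1-r)/(1-s)\bigr)^{1/C}$ for all $s\ge r$, and a direct power-law estimate of the integral appearing in (d) then recovers (d); the reverse direction is symmetric. This last equivalence is the main obstacle: Hardy-type integral conditions at opposite endpoints are not equivalent in general, and the regularity of $\om$ and $v$ (inherited by $\phi$) is precisely what makes the argument work.
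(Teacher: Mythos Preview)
Your argument is correct and follows essentially the same route as the paper. The cycle (b)$\Rightarrow$(a)$\Rightarrow$(c)$\Rightarrow$(b) via the adjoint, the unimodular test functions $g_a$, and Theorem~\ref{th:kernelstimate}(ii) is exactly how the paper proceeds (the paper packages (c)$\Rightarrow$(b) through the auxiliary operator $\widetilde{P^+_\om}$, but the content is identical to your Fubini computation).

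The only place you deviate is (c)$\Leftrightarrow$(d). The paper observes that (d) says precisely that $\frac{\om(r)}{(1-r)v(r)}$ is regular and then invokes Lemma~\ref{Lemma:Muckenhoupt} with $\alpha=2$ to identify this with (c). Your hands-on approach---rewriting both conditions in terms of $\phi=\widehat v/\widehat\om$, reading (c) as the differential inequality $\psi\lesssim(1-r)\psi'$, integrating to a power-type lower bound on $\phi$, and then estimating the tail integral in (d)---is a correct self-contained substitute; it is in effect a direct proof of the relevant instance of Lemma~\ref{Lemma:Muckenhoupt}. The paper's version is shorter because the lemma is already available, while yours makes the mechanism (a Gronwall-type growth bound coming from regularity) explicit.
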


We will also show that \eqref{twoweight} is equivalent to the inequality $\kappa_\om<p\kappa_v$, whenever $\om,v\in\R$ are
such that $\kappa_\om=\lim_{r\to1^-}\frac{\psi_\om(r)}{1-r}$ and $\kappa_v=\lim_{r\to1^-}\frac{\psi_v(r)}{1-r}$ exist and $1\le p<\infty$.
This neat inequality reduces to the known condition $(\b+1)<p(\a+1)$ when $\om(z)=(1-|z|)^\a$ and $v(z)=(1-|z|)^\beta$, see~\cite{ForRud74Ind} and \cite[Theorem~4.24]{Zhu}.

It immediately follows from Theorem~\ref{theorem:projections2} that $P^+_\om$ is bounded on $L^p_\om$ when $\om\in\R$ and $1<p<\infty$. We will see that this does not remain true if $\om\in\I$, and therefore cancelation plays a role when $A^p_\om$ is sufficiently close to the Hardy space~$H^p$. It is worth mentioning that $P_\om$ fails to be bounded on $L^p_\om$ if $\om$ decreases sufficiently fast (at least exponentially) and is smooth enough~\cite{CP2,D1,D3,ZeyTams2012}. In fact, as far as we know, to characterize those radial weights for which $P_\om:L^p_\om\to L^p_\om$ is bounded, is an open problem~\cite[p.~116]{D1}. 
Regarding the case $p=\infty$, we prove that $P_\om:L^\infty(\D)\to \mathcal{B}$ is bounded if $\om\in\R$. Here~$\mathcal{B}$ denotes the Bloch space that consists of $f\in\H(\D)$ such that
    $$
    \|f\|_{\mathcal{B}}=\sup_{z\in\D}|f'(z)|(1-|z|^2)+|f(0)|<\infty.
    $$
These results are gathered in the following theorem.

\begin{theorem}\label{theorem:projections}
Let $1<p<\infty$.
\begin{enumerate}
\item[\rm(i)] If $\om\in\R$, then $P^+_\om:L^p_\om\to L^p_\om$ is bounded. In particular, $P_\om:L^p_\om\to A^p_\om$ is bounded.
\item[\rm(ii)] If $\om\in\R$, then $P_\om:L^\infty(\D)\to \mathcal{B}$ is bounded.
\item[\rm(iii)] If $\om\in\I$, then $P^+_\om$ is not bounded from $L^p_\om$ to $L^p_\om$.
\end{enumerate}
\end{theorem}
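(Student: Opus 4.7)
Part (i) should follow immediately from Theorem~\ref{theorem:projections2}: setting $v=\om$, condition (c) reads that $(\om/\om)^{p'}\om=\om$ is regular, which is true by hypothesis. Hence (a) yields the boundedness of $P^+_\om$ on $L^p_\om$, and the pointwise majorization $|P_\om f|\le P^+_\om(|f|)$ transfers the bound to $P_\om$; analyticity places its image in $A^p_\om$.

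For part (ii), I would differentiate under the integral. Using the conjugate symmetry $\overline{B^\om_z(\z)}=B^\om_\z(z)$, together with $B^\om_\z(z)=K_\om(\bar\z z)$ for radial $\om$ (where $K_\om$ has real Taylor coefficients), one gets $(P_\om f)'(z)=\int_\D f(\z)(B^\om_\z)'(z)\om(\z)dA(\z)$ and the elementary identity $|(B^\om_\z)'(z)|=(|\z|/|z|)|(B^\om_z)'(\z)|$. Hence for $|z|\ge1/2$, $(1-|z|)|(P_\om f)'(z)|\le 2(1-|z|)\|f\|_\infty\,\|(B^\om_z)'\|_{A^1_\om}$. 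Applying Theorem~\ref{th:kernelstimate}(ii) with $v=\om$, $p=1$, $N=1$ (noting $\om\in\R\subset\DD$) gives $\|(B^\om_z)'\|_{A^1_\om}\asymp 1/(1-|z|)$, closing the estimate. The case $|z|<1/2$ is handled by the uniform boundedness of the kernel and its derivative on compact subsets of $\D$, together with $|P_\om f(0)|\le \|f\|_\infty\,\|B^\om_0\|_{A^1_\om}$.

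Part (iii) is the hardest. My plan is to derive a Muckenhoupt-type necessary condition from the boundedness of $P^+_\om$ and then show it fails for every $\om\in\I$. Testing the bilinear form associated with $P^+_\om$ on the radial pair $f(\z)=\chi_{\{|\z|<r\}}\om(\z)^{1-p'}$, $g(z)=\chi_{\{|z|>r\}}$, Fubini reduces matters to an $L^1$-mean of $K_\om$ in the intermediate annulus, which by Theorem~\ref{th:kernelstimate}(i) is $\asymp\int_0^{\rho|z|}dt/(\widehat\om(t)(1-t))$. Carrying the computation through, boundedness of $P^+_\om$ forces
$\sup_{0<r<1}\widehat\om(r)^{p-1}\int_0^r\om(s)^{1-p}(1-s)^{-p}\,ds<\infty$.
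To contradict this for $\om\in\I$, use the defining property: $\e(s):=\om(s)(1-s)/\widehat\om(s)\to 0^+$ as $s\to 1^-$, so $\om(s)^{1-p}(1-s)^{-p}=\widehat\om(s)^{1-p}\e(s)^{1-p}(1-s)^{-1}$ with $\e^{1-p}\to\infty$ for $p>1$. Given $G\gg 1$, choose $r_0$ with $\e(s)<1/G$ on $(r_0,1)$ and set $r=(1+r_0)/2$. Since $\I\subset\DD$, doubling of $\widehat\om$ gives $\widehat\om(s)\asymp\widehat\om(r)$ on $[r_0,r]$, so the integral over that window is $\gtrsim G^{p-1}\widehat\om(r)^{1-p}\log 2$; multiplying by $\widehat\om(r)^{p-1}$ yields a lower bound $\gtrsim G^{p-1}$, which is unbounded as $G\to\infty$.

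The main obstacle I anticipate is the derivation of the Muckenhoupt-type condition in part (iii): while Theorem~\ref{theorem:projections2} produces such a condition for $\om\in\R$ via a slick monomial/adjoint argument, the present $\om$ lies in $\I$ and the same route does not apply verbatim. A direct test-function argument grounded in the kernel estimates of Theorem~\ref{th:kernelstimate} should still work, but the bookkeeping of the ensuing Hardy-type inequalities will require some care.
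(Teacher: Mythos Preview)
Your treatments of (i) and (ii) match the paper's: (i) is an immediate specialization of Theorem~\ref{theorem:projections2} (equivalently Theorem~\ref{theorem:projections3}) with $v=\om$, and for (ii) the paper differentiates under the integral and applies Theorem~\ref{th:kernelstimate}(ii) exactly as you do (you make the symmetry $|(B^\om_\z)'(z)|=(|\z|/|z|)|(B^\om_z)'(\z)|$ explicit, which the paper leaves implicit).

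For (iii) your overall strategy---derive a necessary Muckenhoupt-type condition and show it fails for $\om\in\I$---is the same as the paper's, but the implementations diverge, and your derivation step has a gap. Your proposed test pair $f=\chi_{\{|\z|<r\}}\om^{1-p'}$, $g=\chi_{\{|z|>r\}}$ feeds into the bilinear form to give an expression $\int_r^1\om(s)\int_0^r\om(\rho)^{2-p'}K(\rho s)\,d\rho\,ds$ with $K(t)=\int_0^t du/(\widehat\om(u)(1-u))$; because $\rho<r<s$, the product $\rho s$ ranges over $(0,r)$ and $K(\rho s)$ does not separate into factors that produce $\int_0^r\om^{1-p}(1-s)^{-p}\,ds$ on one side and $\widehat\om(r)^{1/p'}$ on the other. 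I do not see how your claimed condition
\[
\sup_{0<r<1}\widehat\om(r)^{p-1}\int_0^r\om(s)^{1-p}(1-s)^{-p}\,ds<\infty
\]
emerges from this test; note this is precisely condition (e) of Theorem~\ref{theorem:projections3} with $v=\om$, whose derivation there used $\om\in\R$, which is unavailable here. The paper avoids this by testing with the single function $\phi_t=\chi_{[t,1)}$ and using the lower bound $P^+_\om(\phi)(z)\gtrsim K(|z|)\int_{|z|}^1\phi\,\om$ (from Theorem~\ref{th:kernelstimate}(i) plus $K(|z|^2)\asymp K(|z|)$), which yields the different necessary condition $\sup_{t}\widehat\om(t)^{p-1}\int_0^t K(r)^p\om(r)\,dr<\infty$; this is then contradicted via two applications of Bernoulli--l'H\^opital, using that $\liminf_{r\to1^-}K(r)\widehat\om(r)\ge\liminf_{r\to1^-}\psi_\om(r)/(1-r)=\infty$.

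Your \emph{failure} argument, by contrast, is correct and pleasant: rewriting $\om^{1-p}(1-s)^{-p}=\varepsilon(s)^{1-p}\widehat\om(s)^{1-p}(1-s)^{-1}$ with $\varepsilon(s)\to0$ and exploiting doubling on $[r_0,(1+r_0)/2]$ gives an elementary blow-up, a nice alternative to l'H\^opital. If you can either repair the testing step (different test functions, or a lower bound on $P^+_\om$ that leads to a Hardy operator whose Muckenhoupt condition is exactly yours), or else switch to the paper's test and apply your window estimate to $K^p\om$, the argument will close.
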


The projection $P_\om$ is not bounded on $L^1_\om$ if $\om$ is continuous. However, for $\om\in\R$
there are plenty of bounded projections on $L^1_\om$, as Theorem~\ref{theorem:projections5} shows. See also \cite[Proposition~2.1]{AlCo} and \cite[Lemma~2.1]{PelRat}.
The situation is completely different for $\om\in\I$ by a result due to Shields and Williams~\cite[Theorem~3]{ShiWiMich82}. For the sake of
completeness, we will rewrite this result in our language to show that there are no bounded projections from $L^1_\om$ to $A^1_\om$ if $\om\in\I$ is smooth enough.

\begin{lettertheorem}\label{th:L1pI}
Let $\om\in\I$ and assume that there exists an increasing function $\Psi:[0,\infty)\to [0,\infty)$, convex or concave, such that
    $$
    \Psi(x)\asymp \frac{1}{\widehat{\om}\left(1-\frac{1}{x+1}\right)},\quad x\in[0,1).
    $$
Then, there are no bounded projections from $L^1_\om$ to $A^1_\om$.
\end{lettertheorem}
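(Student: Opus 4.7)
We argue by contradiction. Assume $Q\colon L^1_\om\to A^1_\om$ is a bounded projection of norm $M$. Since $\om$ is radial, rotations $R_\theta f(z)=f(e^{i\theta}z)$ act isometrically on $L^1_\om$ and preserve $A^1_\om$, so the averaged operator
\[
\widetilde Q:=\frac{1}{2\pi}\int_0^{2\pi}R_{-\theta}\,Q\,R_\theta\,d\theta
\]
is again a bounded projection of norm $\le M$ that commutes with all rotations. A rotation-commuting projection onto $A^1_\om$ acts on each Fourier mode separately: writing $f_n(r)=\frac{1}{2\pi}\int_0^{2\pi}f(re^{it})e^{-int}\,dt$, one must have
\[
\widetilde Q(f)(z)=\sum_{n\ge0}T_n(f_n)\,z^n,
\]
where each $T_n$ is a continuous linear functional on $L^1((0,1),r\om(r)\,dr)$ satisfying the normalization $T_n(r^n)=1$ (so that $\widetilde Q(z^n)=z^n$).

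Next, represent $T_n(g)=\int_0^1 g(r)\psi_n(r)\,r\om(r)\,dr$ with $\psi_n\in L^\infty([0,1])$. Testing $\|\widetilde Q\|\le M$ on $f(z)=g(|z|)e^{in\theta}$ yields $|T_n(g)|\,\|z^n\|_{A^1_\om}\le M\,\|g\|_{L^1(r\om)}$, whence $\|\psi_n\|_\infty\lesssim M/\|z^n\|_{A^1_\om}$. By Lemma~\ref{Lemma:replacement-Lemmas-Memoirs}(iii) and the hypothesis on $\Psi$,
\[
\|z^n\|_{A^1_\om}=2\int_0^1 r^{n+1}\om(r)\,dr\asymp\widehat\om(1-1/n)\asymp 1/\Psi(n),
\]
so $\|\psi_n\|_\infty\lesssim M\Psi(n)$; the normalization $T_n(r^n)=1$ forces the matching lower bound via $L^1$--$L^\infty$ duality, and therefore $\|\psi_n\|_\infty\asymp\Psi(n)$ uniformly in $n$.

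The concluding step is a Shields--Williams-type contradiction via a lacunary test function. Pick a lacunary sequence $n_k\uparrow\infty$ and, for each $k$, a point $r_k\in(0,1)$ where $|\psi_{n_k}|\asymp\Psi(n_k)$; by $\om\in\I$ these accumulate at $1$. On thin pairwise disjoint annuli $I_k$ about $r_k$ consider $f(z)=\sum_k c_k\chi_{I_k}(|z|)e^{in_k\theta}$. Disjointness of the radial supports pins down $\|f\|_{L^1_\om}=\sum_k|c_k|\,\|\chi_{I_k}e^{in_k\theta}\|_{L^1_\om}$, while the Taylor coefficients of $\widetilde Qf$ satisfy $c_k T_{n_k}(\chi_{I_k})\asymp c_k\,\Psi(n_k)\,\|\chi_{I_k}e^{in_k\theta}\|_{L^1_\om}$. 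A Zygmund--Khintchin lacunary inequality for analytic series then gives
\[
\|\widetilde Qf\|_{A^1_\om}\asymp 2\int_0^1\Big(\sum_k|c_k T_{n_k}(\chi_{I_k})|^2 r^{2n_k}\Big)^{1/2}r\om(r)\,dr,
\]
and the convex (respectively concave) hypothesis on $\Psi$ permits a calibration of the scalars $(c_k)$ via Jensen's inequality applied in the correct direction that forces the ratio $\|\widetilde Qf\|_{A^1_\om}/\|f\|_{L^1_\om}$ to grow without bound as the number of active modes $\to\infty$, contradicting $\|\widetilde Q\|\le M$.

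\textbf{Main obstacle.} The delicate point is precisely this last calibration: one must reconcile the $\ell^2$-type lacunary lower bound for $\widetilde Qf$ with the $\ell^1$-type additivity of $\|f\|_{L^1_\om}$. The convex/concave assumption on $\Psi$ is exactly what converts the pointwise identity $\|\psi_n\|_\infty\asymp\Psi(n)$ into a divergent aggregate comparison, via Jensen applied upwards (convex case) or downwards (concave case) against the discrete measure determined by the weights $|c_k|\,\|\chi_{I_k}e^{in_k\theta}\|_{L^1_\om}$; extracting the correct Zygmund-type lower estimate in the weighted space $A^1_\om$ is the technical heart of the argument.
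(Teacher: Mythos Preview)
Your reduction via rotation-averaging to a diagonal operator $\widetilde Q(f)=\sum_{n\ge0}T_n(f_n)z^n$ with $T_n(r^n)=1$ and $\|\psi_n\|_\infty\asymp\Psi(n)$ is correct and is indeed the opening move of the Shields--Williams argument the paper invokes. The genuine gap is in the concluding step: your lacunary test function $f=\sum_k c_k\chi_{I_k}(|z|)e^{in_k\theta}$ can \emph{never} force $\|\widetilde Qf\|_{A^1_\om}/\|f\|_{L^1_\om}$ to diverge. Writing $d_k=c_k\int_{I_k}r\om(r)\,dr$ so that $\|f\|_{L^1_\om}\asymp\sum_k|d_k|$, the Taylor coefficients $a_k=c_kT_{n_k}(\chi_{I_k})$ satisfy $|a_k|\le\|\psi_{n_k}\|_\infty|d_k|\lesssim|d_k|\Psi(n_k)$, and the Zygmund estimate combined with $(\sum x_k^2)^{1/2}\le\sum|x_k|$ and Lemma~\ref{Lemma:replacement-Lemmas-Memoirs}(iii) gives
\[
\|\widetilde Qf\|_{A^1_\om}\asymp\int_0^1\Big(\sum_k|a_k|^2r^{2n_k}\Big)^{1/2}r\om(r)\,dr
\le\sum_k|a_k|\int_0^1r^{n_k+1}\om(r)\,dr\asymp\sum_k\frac{|a_k|}{\Psi(n_k)}\lesssim\sum_k|d_k|.
\]
Thus the ratio is bounded above by a fixed constant regardless of the choice of $(c_k)$. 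The $\ell^2$ structure of lacunary analytic series works \emph{against} you here --- it makes $\widetilde Qf$ small relative to $f$, not large --- and no Jensen calibration in either direction can overturn this inequality.

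The paper does not attempt an independent proof. It verifies that $\Psi$ satisfies condition~(U) of Shields--Williams \cite{ShiWiMich82} (this is where $\om\in\I$ enters), then quotes \cite[Lemma~2 and Theorem~3]{ShiWiMich82}: the existence of a bounded projection would force
\[
x\longmapsto\widehat\om\!\left(1-\tfrac{1}{x+1}\right)\int_{1/2}^{x}\frac{dt}{\widehat\om\!\left(1-\tfrac{1}{t+1}\right)t}
\;\asymp\;\frac{1}{\Psi(x)}\int_{1/2}^{x}\frac{\Psi(t)}{t}\,dt
\]
to be bounded, and a change of variable plus a Bernoulli--l'H\^opital argument (parallel to \eqref{pillu}) shows this diverges for $\om\in\I$. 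The Shields--Williams obstruction arises from an aggregate moment comparison exploiting the full family of constraints $T_n(r^n)=1$, not from a lacunary $\ell^2$-versus-$\ell^1$ test; if you wish to reconstruct their proof, that is the direction to pursue after your step~4.
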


This result is strongly connected with the fact that there are no bounded projections from $L^1$ of the unit circle to
$H^1$~\cite[Theorem~9.7]{Zhu}.

The boundedness of projections plays an important role in many characterizations of dual spaces, and therefore it is natural to expect that
Theorem~\ref{theorem:projections2} can be used to establish such results. It turns out that \eqref{twoweight} can be reformulated in terms of a duality relation when the weights are regular.

\begin{theorem}\label{th:duality3}
Let $1<p<\infty$ and $\om,v\in\R$, and denote $V_{p'}=V_{p'}(\om,v)=\left(\frac{\om}{v}\right)^{p'}v$. Then the following assertions are equivalent:
\begin{itemize}
\item[\rm(a)] $P_\om:L^p_v\to L^p_v$ is bounded;
\item[\rm(b)] The dual of $A^{p'}_{V_{p'}}$ can be identified with $A^p_v$ (up to an equivalence of norms) under the pairing
    \begin{equation}\label{pairingom}
    \langle f,g\rangle_{A^2_\om}=\int_{\D}f(z)\overline{g(z)}\,\om(z)dA(z).
    \end{equation}
    \end{itemize}
\end{theorem}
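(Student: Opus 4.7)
The plan is to exploit the algebraic identity $\omega=V_{p'}^{1/p'}v^{1/p}$, which, via H\"older's inequality, makes the pairing \eqref{pairingom} automatically bounded at the bilinear level: $|\langle f,g\rangle_{A^2_\omega}|\le\|f\|_{A^{p'}_{V_{p'}}}\|g\|_{A^p_v}$ for every $f\in A^{p'}_{V_{p'}}$ and $g\in A^p_v$. Hence each $g\in A^p_v$ will yield $L_g\in(A^{p'}_{V_{p'}})^*$ with $\|L_g\|\le\|g\|_{A^p_v}$, and the map $g\mapsto L_g$ will be injective since the radiality of $\omega$ and the orthogonality of monomials in $A^2_\omega$ recover the Taylor coefficients of $g$ from $L_g(z^n)$. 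The projection $P_\omega$ will enter only to produce an analytic representer for an arbitrary functional.

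For (a)$\Rightarrow$(b), I would take $L\in(A^{p'}_{V_{p'}})^*$, extend it by Hahn--Banach to $\tilde L\in (L^{p'}_{V_{p'}})^*$ with $\|\tilde L\|=\|L\|$, and use the standard duality $(L^{p'}(V_{p'}\,dA))^*=L^p(V_{p'}\,dA)$ to write $\tilde L(f)=\int f\,\bar h\,V_{p'}\,dA$ for some $h\in L^p_{V_{p'}}$. Setting $g_0:=h V_{p'}/\omega$, I would use $V_{p'}^{p-1}=\omega^p/v$ (which follows from $p'(p-1)=p$ and $(1-p')(p-1)=-1$) to check $|g_0|^p v=|h|^p V_{p'}$, so $g_0\in L^p_v$ with $\|g_0\|_{L^p_v}=\|L\|$ and $\tilde L(f)=\int f\,\overline{g_0}\,\omega\,dA$. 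I then define $g:=P_\omega g_0$; by hypothesis (a), $g\in A^p_v$ with $\|g\|_{A^p_v}\lesssim\|L\|$. For a polynomial $f$, Fubini's theorem (justified by the kernel integrability built into Theorem~\ref{th:kernelstimate}) together with the reproducing property of $B^\omega_\zeta$ on $A^2_\omega$ yields
\[
\int f\,\overline{P_\omega g_0}\,\omega\,dA=\int\overline{g_0(\zeta)}\,\omega(\zeta)\left(\int f(z)\,\overline{B^\omega_\zeta(z)}\,\omega(z)\,dA(z)\right)dA(\zeta)=\int f\,\overline{g_0}\,\omega\,dA=L(f).
\]
Theorem~\ref{theorem:projections2} guarantees $V_{p'}\in\R$, so polynomials are dense in $A^{p'}_{V_{p'}}$ and the identity $L=L_g$ propagates to the full space.

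For (b)$\Rightarrow$(a), I would take $f\in L^p_v$ bounded and compactly supported, so that $P_\omega f\in\H(\D)$ is well defined. For a polynomial $g$ the same Fubini--reproducing-kernel calculation gives $\langle P_\omega f,g\rangle_{A^2_\omega}=\int f\,\bar g\,\omega\,dA$, and H\"older bounds this by $\|f\|_{L^p_v}\|g\|_{A^{p'}_{V_{p'}}}$. The density of polynomials in $A^{p'}_{V_{p'}}$ combined with the isomorphism in (b) then forces
\[
\|P_\omega f\|_{A^p_v}\asymp\sup_{\|g\|_{A^{p'}_{V_{p'}}}\le 1}|\langle P_\omega f,g\rangle_{A^2_\omega}|\lesssim\|f\|_{L^p_v},
\]
and a standard density argument in $L^p_v$ finishes the proof.

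The main obstacle will be the construction of the representer in (a)$\Rightarrow$(b): $g_0$ lies in $L^p_v\cap L^1_\omega$ but in general not in $L^2_\omega$, so the reproducing property cannot be invoked directly on $P_\omega g_0$ paired with itself. The fix will be to test only against polynomials, where the kernel estimates of Theorem~\ref{th:kernelstimate} legitimize the Fubini swap, and then to close by density; both this density and the boundedness of $P_\omega$ needed at that step rest on the fact, granted by Theorem~\ref{theorem:projections2}, that $V_{p'}\in\R$.
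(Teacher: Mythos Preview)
Your argument is correct and follows the same strategy as the paper: Hahn--Banach plus $L^p$-duality to obtain a measurable representer, then $P_\omega$ to make it analytic, with Fubini shifting the projection across the $\omega$-pairing. The differences are cosmetic---the paper phrases the duality directly as $(L^{p'}_{V_{p'}})^\star\simeq L^p_v$ under $\langle\cdot,\cdot\rangle_{L^2_\omega}$ (your substitution $g_0=hV_{p'}/\omega$ in disguise), justifies Fubini via Proposition~\ref{pr:duality} rather than by restricting to polynomials, and in (b)$\Rightarrow$(a) works with an arbitrary $h\in L^p_v$ and explicitly identifies $P_\omega h$ with the representer $g$ by testing on monomials, a step you leave implicit in your final displayed estimate but which is needed to conclude $P_\omega f\in A^p_v$.
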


The argument used in the proof readily shows that $(A^p_v)^\star\simeq A^{p'}_{V_{p'}}$, if $P_\om:L^p_v\to L^p_v$ is bounded, and hence, in this case, $A^p_v$ is reflexive.

Finally, we will discuss two cases which are probably the most neat ones in this context. Part~(i) of the next result follows from Theorem~\ref{th:duality3}, and Part~(ii) is probably known, at least to experts working on the field.

\begin{corollary}\label{th:duality2}
Let $1<p<\infty$ and $\om\in\R$. Then the following assertions hold under the pairing~\eqref{pairingom}:
\begin{itemize}
\item[\rm(i)] $(A^p_\om)^\star\simeq A^{p'}_\om$;
\item[\rm(ii)] $(A^1_\om)^\star\simeq\B$.
\end{itemize}
\end{corollary}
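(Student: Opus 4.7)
For Part (i), the plan is simply to instantiate Theorem~\ref{th:duality3} with $v=\om$. The required hypothesis that $P_\om:L^p_\om\to L^p_\om$ is bounded is exactly the content of Theorem~\ref{theorem:projections}(i), and the weight $V_{p'}(\om,\om)=(\om/\om)^{p'}\om$ collapses to $\om$ itself, so the conclusion $(A^p_\om)^\star\simeq A^{p'}_\om$ under the pairing~\eqref{pairingom} drops out at once with no further work.

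For Part (ii) I would prove the two inclusions separately. To embed $\B$ into $(A^1_\om)^\star$, given $g\in\B$ define $T_g(f)=\langle f,g\rangle_{A^2_\om}$ on polynomials and polarize the Littlewood-Paley identity~\eqref{LP1} to rewrite
\begin{equation*}
T_g(f)=4\int_\D f'(z)\overline{g'(z)}\,\om^\star(z)\,dA(z)+\om(\D)f(0)\overline{g(0)}.
\end{equation*}
Using $|g'(z)|(1-|z|)\le\|g\|_\B$ together with Lemma~\ref{Lemma:replacement-Lemmas-Memoirs}(iv), which gives $\om^\star(z)\asymp\widehat{\om}(z)(1-|z|)$, this is bounded above by a constant multiple of $\|g\|_\B\left(\int_\D|f'(z)|\widehat{\om}(z)\,dA(z)+|f(0)|\right)$. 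Since $\om\in\R$ implies $\widehat{\om}(z)\asymp\om(z)(1-|z|)$, the proof is reduced to the standard $L^1$ derivative characterization $\|f\|_{A^1_\om}\asymp\int_\D|f'(z)|(1-|z|)\om(z)\,dA(z)+|f(0)|$ valid for regular weights. Density of polynomials in $A^1_\om$ then extends $T_g$ to a functional of norm $\lesssim\|g\|_\B$.

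For the reverse inclusion, given $\Lambda\in(A^1_\om)^\star$ I would apply Hahn-Banach to extend $\Lambda$ to $L^1_\om$ and use the duality $(L^1_\om)^\star\simeq L^\infty(\D)$ to obtain $h\in L^\infty(\D)$ with $\Lambda(f)=\int_\D f\,\overline{h}\,\om\,dA$. Setting $g=P_\om h$ and appealing to Theorem~\ref{theorem:projections}(ii) gives $g\in\B$ with $\|g\|_\B\lesssim\|h\|_\infty$. A Fubini computation on polynomials $f$, using the Hermitian symmetry $B^\om_z(\zeta)=\overline{B^\om_\zeta(z)}$ and the reproducing property $\int_\D f(z)\overline{B^\om_\zeta(z)}\om(z)\,dA(z)=f(\zeta)$, then shows
\begin{equation*}
\langle f,g\rangle_{A^2_\om}=\int_\D\overline{h(\zeta)}\,f(\zeta)\,\om(\zeta)\,dA(\zeta)=\Lambda(f),
\end{equation*}
and the equality extends to all $f\in A^1_\om$ by density together with the continuity of the pairing established in the first step.

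The main obstacle I anticipate is the $L^1$ derivative characterization $\|f\|_{A^1_\om}\asymp\int_\D|f'(z)|(1-|z|)\om(z)\,dA(z)+|f(0)|$ for $\om\in\R$; while such equivalences are well established for regular weights via the machinery of the Peláez-Rättyä memoir, this is the only non-formal input in the argument, the remaining ingredients being algebraic manipulations of the reproducing formula and the polarized Littlewood-Paley identity.
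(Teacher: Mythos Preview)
Your proposal is correct and follows essentially the same route as the paper. Part~(i) is handled identically (note only that Theorem~\ref{th:duality3} literally gives $(A^{p'}_\om)^\star\simeq A^p_\om$, so one silently swaps $p\leftrightarrow p'$); for Part~(ii) the paper also polarizes~\eqref{LP1}, invokes $\om^\star(z)\asymp\widehat\om(z)(1-|z|)$ and the Littlewood--Paley formula~\eqref{LPformula} (your ``$L^1$ derivative characterization''), and for the converse uses Hahn--Banach, $(L^1_\om)^\star\simeq L^\infty$, and Theorem~\ref{theorem:projections}(ii) --- the only cosmetic difference being that the paper justifies the Fubini/reproducing step via dilations $f(r\,\cdot)$ and a limit $r\to1^-$ rather than via polynomials and density.
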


\section{Integrability of reproducing kernels}

In this section we will prove Theorem~\ref{th:kernelstimate} and then deduce Corollary~\ref{co:ernelstimaten}. We will need several auxiliary results that are presented first.

\subsection{Preliminary results}

We begin with auxiliary results on smooth Hadamard products, and then apply Hardy-Littlewood-inequalities to obtain estimates for $L^p$-means of the reproducing kernels.

Throughout this section we will assume, without loss of generality, that
$\int_0^1 \om(s)\,ds=1$. For each $n\in\N\cup\{0\}$, let $r_n=r_n(\om)\in[0,1)$
be defined by
    \begin{equation}\label{rn}
    \widehat{\om}(r_n)=\int_{r_n}^1 \om(s)\,ds=\frac{1}{2^n}.
    \end{equation}
Clearly, $\{r_n\}_{n=0}^\infty$ is a non-decreasing sequence of
distinct points on $[0,1)$ such that $r_0=0$ and $r_n\to1^-$, as
$n\to\infty$. For $x\in[0,\infty)$, let $E(x)$ denote the integer
such that $E(x)\le x<E(x)+1$, and set
$M_n=E\left(\frac{1}{1-r_{n}}\right)$. Write
    $$
    I(0)=I_{\om}(0)=\left\{k\in\N\cup\{0\}:k<M_1\right\}
    $$
and
   \begin{equation*}
    I(n)=I_{\om}(n)=\left\{k\in\N:M_n\le
    k<M_{n+1}\right\}
  \end{equation*}
for all $n\in\N$. If
$f(z)=\sum_{n=0}^\infty a_nz^n$ is analytic in~$\D,$ define the
polynomials $\Delta^{\om}_nf$ by
    \[
    \Delta_n^{\om}f(z)=\sum_{k\in I_{\om}(n)} a_kz^k,\quad n\in\N\cup\{0\}.
    \]
The next result on partial sums $\Delta_n^{\om}f$ together with \cite[Theorem~4]{PelRathg} is one of the
principal ingredients in the proof of Theorem~\ref{th:kernelstimate}.

\begin{lemma}\label{le:de1}
Let $0<p\le 1$ and $\om\in\DD$. Then
    \[ \|f\|^p_{A^p_\om}\lesssim
    \sum_{n=0}^\infty 2^{-n} \|\Delta^{\om}_n
    f\|_{H^p}^p\]
for all $f\in\H(\D)$.
\end{lemma}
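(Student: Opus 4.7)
The plan is to decompose $f$ as $f=\sum_{n=0}^\infty \Delta_n^\om f$ and, using the fact that $x\mapsto x^p$ is subadditive for $0<p\le1$, reduce the estimate to a block-by-block bound of the form
\[
\|\Delta_n^\om f\|_{A^p_\om}^p\lesssim 2^{-n}\|\Delta_n^\om f\|_{H^p}^p,
\]
from which summation over $n$ yields the lemma.

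The key observation for the block estimate is that $\Delta_n^\om f$ is a polynomial supported on frequencies in $I_\om(n)$, whose minimum is $M_n$. Writing $\Delta_n^\om f(z)=z^{M_n}Q_n(z)$ with $Q_n$ a polynomial, the identity $M_p(r,\Delta_n^\om f)=r^{M_n}M_p(r,Q_n)$ together with Hardy's convexity theorem (saying $r\mapsto M_p(r,Q_n)$ is nondecreasing) yields
\[
M_p(r,\Delta_n^\om f)\le r^{M_n}\|Q_n\|_{H^p}=r^{M_n}\|\Delta_n^\om f\|_{H^p},\qquad 0<r<1.
\]
Substituting this into the definition of the $A^p_\om$-norm I get
\[
\|\Delta_n^\om f\|_{A^p_\om}^p\le \|\Delta_n^\om f\|_{H^p}^p\int_0^1 r^{M_np}\,\om(r)\,2r\,dr.
\]

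It remains to show that the integral is $\lesssim 2^{-n}$ uniformly in $n$. For this I invoke Lemma~\ref{Lemma:replacement-Lemmas-Memoirs}(iii) to write
\[
\int_0^1 r^{M_n p}\om(r)\,dr\asymp \widehat{\om}\!\left(1-\tfrac{1}{M_np}\right),
\]
valid once $M_n p\ge 1$ (the finitely many small-$n$ exceptions cause no harm and absorb into the implied constant). Now recall $M_n\asymp 1/(1-r_n)$, so $\widehat{\om}(1-1/M_n)\asymp\widehat{\om}(r_n)=2^{-n}$. Since $p\le 1$, one has $1-1/(M_n p)\le 1-1/M_n$, and the doubling property in the form \eqref{Eq:replacement-Lemma1.1} of Lemma~\ref{Lemma:replacement-Lemmas-Memoirs}(ii) gives
\[
\widehat{\om}\!\left(1-\tfrac{1}{M_n p}\right)\le C\,p^{-\b_0}\widehat{\om}(r_n)\asymp 2^{-n}.
\]
Combining the inequalities above yields the block estimate, and hence the lemma.

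The main obstacle is just the bookkeeping around the lowest frequencies (the case $n=0$, where $M_0=0$, and the requirement $M_np\ge 1$ in Lemma~\ref{Lemma:replacement-Lemmas-Memoirs}(iii)). Both issues are handled by absorbing finitely many blocks into the constant, since $\widehat{\om}$ is bounded on $\D$; the heart of the argument is the interplay between the $z^{M_n}$ factor produced by the low-frequency cutoff of $\Delta_n^\om f$ and the doubling-based $\widehat{\om}$-calibration of the decomposition points $r_n$.
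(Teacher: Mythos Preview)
Your proof is correct and follows essentially the same approach as the paper: subadditivity of $x\mapsto x^p$ gives $M_p^p(r,f)\le\sum_n r^{pM_n}\|\Delta_n^\om f\|_{H^p}^p$, and then one integrates against $\om$ and shows $\int_0^1 r^{pM_n}\om(r)\,dr\asymp 2^{-n}$. The only cosmetic difference is that the paper obtains this last estimate by citing \cite[Proposition~9]{PelRathg} (after noting it extends to $\om\in\DD$), whereas you derive it directly from Lemma~\ref{Lemma:replacement-Lemmas-Memoirs}(iii) and doubling --- an alternative the paper itself mentions in the last sentence of its proof.
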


\begin{proof}
Since $0<p\le 1$, \cite[(3.13)]{PelRathg} yields
    \begin{equation*}
    M^p_p(r,f)\le\sum_{n=0}^\infty M^p_p(r,\Delta^{\om}_nf)\le\sum_{n=0}^\infty
    r^{pM_n}\|\Delta^{\om}_n f\|^p_{H^p}.
    \end{equation*}
With Lemma~\ref{Lemma:replacement-Lemmas-Memoirs} in hand, one readily sees that \cite[Lemma~8(i) and (ii)]{PelRathg} is valid for $\om\in\DD$, and hence \cite[Proposition~9]{PelRathg} also. This latter result, with $p=1=\a$, gives
    \begin{equation*}
    \begin{split}
    \|f\|^p_{A^p_\om}\le\int_0^1 \left( \sum_{n=0}^\infty
    r^{pM_n}\|\Delta^{\om}_n f\|^p_{H^p}\right)\om(r)\,dr
    \asymp\sum_{n=0}^\infty 2^{-n}\|\Delta^{\om}_n
    f\|_{H^p}^p,
    \end{split}
    \end{equation*}
which is the inequality we wanted to prove. The last asymptotic equality can also be directly deduced from \eqref{rn} and  Lemma~\ref{Lemma:replacement-Lemmas-Memoirs}.
\end{proof}

We will need background on certain smooth polynomials defined in
terms of Hadamard products. If $W(z)=\sum_{k\in J}b_kz^k$ is a
polynomial and $f(z)=\sum_{k=0}^{\infty}a_kz^k\in \H(\D)$, then
the Hadamard product
    $$
    (W\ast f)(z)=\sum_{k\in J}b_ka_kz^k
    $$
is well defined.

If $\Phi:\mathbb{R}\to\C$ is a $C^\infty$-function such that its
support, $\supp(\Phi)$, is a compact subset of $(0,\infty)$, we set
    $$
    A_{\Phi,m}=\max_{s\in\mathbb{R}}|\Phi(s)|+\max_{s\in\mathbb{R}}|\Phi^{(m)}(s)|,
    $$
and consider the polynomials
    $$
    W_N^\Phi(z)=\sum_{k\in\mathbb
    N}\Phi\left(\frac{k}{N}\right)z^k,\quad N\in\N.
    $$
With this notation we can state the next result that follows by
\cite[p.~111--113]{Pabook}.

\begin{lettertheorem}\label{th:cesaro}
Let $\Phi:\mathbb{R}\to\C$ be a $C^\infty$-function such that
$\supp(\Phi)\subset(0, \infty)$ is compact. Then for each $p\in(0,\infty)$ and $m\in\N$ with $mp>1$, there exists a constant
$C=C(p)>0$ such that
    $$
    \|W_N^\Phi\ast f\|_{H^p}\le C A_{\Phi,m}\|f\|_{H^p}
    $$
for all $f\in H^p$ and $N\in\N$.
\end{lettertheorem}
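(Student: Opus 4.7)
The plan is to realize $f\mapsto W_N^\Phi\ast f$ as a periodic convolution on $\T$ with a kernel whose pointwise size is controlled by the smoothness of $\Phi$, and then transfer that control to the $H^p$-scale. The first step is to introduce
$$K_N(e^{i\theta})=\sum_{k=1}^{\infty}\Phi\!\left(\frac{k}{N}\right)e^{ik\theta},$$
so that the boundary values of $W_N^\Phi\ast f$ coincide with the periodic convolution of those of $f$ with $K_N/(2\pi)$. Since $\Phi\in C^\infty(\mathbb{R})$ has compact support in $(0,\infty)$, the function $x\mapsto\Phi(x/N)$ is Schwartz, and iterating integration by parts $m$ times gives the Fourier-transform bound $|\widehat\Phi(\xi)|\lesssim A_{\Phi,m}(1+|\xi|)^{-m}$. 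Applying the Poisson summation formula and isolating the $j=0$ term would then yield the crucial pointwise kernel estimate
$$|K_N(e^{i\theta})|\lesssim\frac{A_{\Phi,m}\,N}{(1+N|\theta|)^m},\qquad |\theta|\le\pi.$$

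In the range $p\ge 1$, the hypothesis $mp>1$ with $m\in\N$ forces $m\ge 2$, so that the substitution $s=N\theta$ in the preceding estimate yields $\|K_N\|_{L^1(\T)}\lesssim A_{\Phi,m}\int_{\mathbb{R}}(1+|s|)^{-m}\,ds\lesssim A_{\Phi,m}$. Young's convolution inequality, applied after passing to boundary values via $M_p(r,g)\nearrow\|g\|_{H^p}$, then immediately gives
$$\|W_N^\Phi\ast f\|_{H^p}\le\|K_N\|_{L^1(\T)}\,\|f\|_{H^p}\lesssim A_{\Phi,m}\,\|f\|_{H^p}.$$

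The delicate case is $0<p<1$, where I would use the atomic decomposition of $H^p(\T)$: write $f=\sum_j\lambda_j a_j$ with $\sum_j|\lambda_j|^p\lesssim\|f\|_{H^p}^p$ and each $a_j$ an $H^p$-atom supported in an arc $I_j$. By the $p$-subadditivity of $\|\cdot\|_{H^p}^p$ it suffices to establish a uniform atomic estimate $\|W_N^\Phi\ast a\|_{H^p}\lesssim A_{\Phi,m}$. One then splits $\T$ into a neighborhood of $\supp(a)$, controlled via H\"older and an appropriate $L^q$-norm of $K_N$ extracted from the pointwise bound above, and its complement, where the vanishing moments of $a$ combined with the smoothness of the kernel (for instance, $|\partial_\theta K_N(e^{i\theta})|\lesssim A_{\Phi,m}\,N^2(1+N|\theta|)^{-m}$) would yield H\"ormander-type cancellation. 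The hypothesis $mp>1$ is exactly what makes the far-field integral $\int_{|\theta|>1/N}(1+N|\theta|)^{-mp}\,d\theta$ uniformly bounded in $N$, which explains the precise form of the assumption.

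The main obstacle is this last step, where the decay order $m$ of the kernel has to be balanced against the cancellation order enforced by the smallness of $p$; the quantitative condition $mp>1$ is precisely the input on which the atomic estimate depends.
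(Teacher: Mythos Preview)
The paper does not prove this statement; it is quoted from Pavlovi\'c's monograph \cite[p.~111--113]{Pabook} (the \texttt{lettertheorem} environment is reserved in this paper for results imported from the literature). There is therefore no in-paper argument to compare your proposal against.

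Your outline is a reasonable route, but one step is simply wrong. The assertion that for $p\ge1$ the hypothesis $mp>1$ with $m\in\N$ forces $m\ge2$ fails whenever $p>1$: then $m=1$ is admissible, yet your kernel bound $|K_N(e^{i\theta})|\lesssim A_{\Phi,1}N(1+N|\theta|)^{-1}$ yields only $\|K_N\|_{L^1(\T)}\asymp\log N$, so Young's inequality does not close. That case needs a genuine multiplier argument (Marcinkiewicz, or the M.~Riesz projection combined with a single summation by parts), not an $L^1$ bound on the kernel. For $0<p<1$ your sketch is plausible, but note that exploiting the vanishing moments of an $H^p$-atom in the far field requires control of as many derivatives of $K_N$ as the atom has moments, whereas $A_{\Phi,m}$ bounds only $\Phi$ and $\Phi^{(m)}$; filling this in carefully is exactly the ``main obstacle'' you yourself flag.

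For comparison, the argument in Pavlovi\'c proceeds differently and uniformly in $p$: one applies $m$ successive summations by parts to express $W_N^\Phi\ast f$ as a superposition of Ces\`aro means of order $m-1$ of $f$, weighted by $m$-th finite differences of the sequence $\{\Phi(k/N)\}$, and then invokes the classical fact that the Ces\`aro means of order $\alpha$ are uniformly bounded on $H^p$ precisely when $\alpha>1/p-1$, that is, when $mp>1$. The quantity $A_{\Phi,m}$ enters because the $m$-th differences are controlled by $N^{-m}\|\Phi^{(m)}\|_\infty$ over $O(N)$ indices, with $\|\Phi\|_\infty$ handling the boundary terms. This avoids both Poisson summation and atomic decomposition and makes the role of the hypothesis $mp>1$ completely transparent.
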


For $g(z)=\sum_{k=0}^\infty b_k z^k\in\H(\D)$ and
$n_1,n_2\in\N\cup\{0\}$, we set
    $$
    S_{n_1,n_2}g(z)=\sum_{k=n_1}^{n_2-1}b_kz^k,\quad n_1<n_2,
    $$
and for each radial weight $\om$, we write
    $$
    \om_x=\int_0^1 r^{2x+1}\om(r)\,dr,\quad x>-1.
    $$

The next result is known and can be proved by summing by parts and using the M.~Riesz projection
theorem, see \cite[Lemma~E]{PelRathg}.

\begin{letterlemma}\label{le:A}
Let $1<p<\infty$ and
$\lambda=\left\{\lambda_k\right\}_{k=0}^\infty$ be a monotone
sequence of positive numbers. Let $(\lambda
g)(z)=\sum_{k=0}^{\infty} \lambda_kb_k z^k$, where
$g(z)=\sum_{k=0}^\infty b_k z^k$.
\begin{itemize}
\item[\rm(a)] If $\left\{\lambda_k\right\}_{n=0}^\infty$ is
nondecreasing, then there exists a constant $C>0$ such that
    $$
    C^{-1}\lambda_{n_1}\|S_{n_1,n_2} g\|_{H^p}\le\|S_{n_1,n_2}\lambda g\|_{H^p}
    \le C\lambda_{n_2}\|S_{n_1,n_2}g\|_{H^p}.
    $$
\item[\rm(b)] If $\left\{\lambda_n\right\}_{n=0}^\infty$ is
nonincreasing, then there exists a constant $C>0$ such that
    $$
    C^{-1}\lambda_{n_2}\|S_{n_1,n_2} g\|_{H^p}
    \le\|S_{n_1,n_2} \lambda g\|_{H^p}
    \le C\lambda_{n_1}\|S_{n_1,n_2} g\|_{H^p}.
    $$
\end{itemize}
 \end{letterlemma}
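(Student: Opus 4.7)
The plan is to reduce Lemma~\ref{le:A} to the uniform $H^p$-boundedness of the partial-sum operators, which for $1<p<\infty$ is a standard consequence of the M.~Riesz projection theorem, and then to rearrange the weighted sum by Abel summation.

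I would introduce the running partial sums $T_m g(z)=\sum_{k=n_1}^{m}b_k z^k$ for $n_1-1\le m\le n_2-1$, with the convention $T_{n_1-1}g=0$. The key elementary observation is that for $n_1\le m\le n_2-1$ one has $T_m g=S_{0,m+1}(S_{n_1,n_2}g)$, and since the truncations $S_{0,M}$ are uniformly bounded on $H^p$ by M.~Riesz, this yields
\[
\|T_m g\|_{H^p}\le C\,\|S_{n_1,n_2}g\|_{H^p},
\]
with $C$ independent of $m, n_1, n_2$. Writing $b_k z^k=T_k g-T_{k-1}g$ and summing by parts then produces the telescoping identity
\[
S_{n_1,n_2}(\lambda g)=\lambda_{n_2-1}\,T_{n_2-1}g+\sum_{k=n_1}^{n_2-2}(\lambda_k-\lambda_{k+1})\,T_k g.
\]

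If $\{\lambda_k\}$ is nondecreasing, the differences $\lambda_{k+1}-\lambda_k$ are nonnegative and telescope to $\lambda_{n_2-1}-\lambda_{n_1}$. Applying the triangle inequality in $H^p$ (legitimate since $p>1$) together with the uniform estimate on $\|T_m g\|_{H^p}$ then delivers the upper bound $\|S_{n_1,n_2}(\lambda g)\|_{H^p}\lesssim\lambda_{n_2}\|S_{n_1,n_2}g\|_{H^p}$ claimed in part~(a). The nonincreasing case in part~(b) is handled by the same manipulation, with $\lambda_{n_1}$ now controlling the telescoping sum.

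For the reverse inequalities, the trick I would employ is to apply the upper bound already established to the reciprocal sequence $\{1/\lambda_k\}$, which has the opposite monotonicity, with the polynomial $\lambda g$ in place of $g$; since $(1/\lambda)(\lambda g)=g$, this immediately yields the matching lower bound in each case. The only real obstacle is recognising the combination \emph{Abel summation plus M.~Riesz partial-sum boundedness}; once that is in place the proof is essentially bookkeeping, and the hypothesis $1<p<\infty$ cannot be dispensed with because both ingredients genuinely require it.
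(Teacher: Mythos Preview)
Your argument is correct and coincides with what the paper indicates: the authors do not spell out a proof but explicitly state that the result ``can be proved by summing by parts and using the M.~Riesz projection theorem,'' citing \cite[Lemma~E]{PelRathg}. Your Abel summation identity together with the uniform $H^p$-boundedness of the partial-sum operators is precisely that scheme, and the reciprocal-sequence trick for the lower bounds is the standard way to close the argument.
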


We will also need an extension of this result for $0<p\le 1$ in the case when $\lambda_k$ is either $\om_k$ or $\om_k^{-1}$.

\begin{lemma}\label{le:decre}
Let $0<p<\infty$, $\om$ a radial weight and
$n_1,n_2\in\N$ with $n_1<n_2$. Let $g(z)=\sum_{k=0}^{\infty}c_k
z^k$ be analytic in $\D$, and assume that both,
$h(z)=\sum_{k=0}^{\infty}c_k\om_kz^k$ and
$H(z)=\sum_{k=0}^{\infty}\frac{c_k}{\om_k}z^k$, are analytic in $\D$ as
well. Then the following assertions hold:
    \begin{enumerate}
    \item[(i)] There exists a constant $C=C(p)>0$ such that
    \begin{equation*}
    \|S_{n_1,n_2}h\|_{H^p} \le C\om_{\frac{n_1-1}{2}}\| g\|_{H^p}.
    \end{equation*}
    \item[(ii)] If $\om\in\DD$ and $n_1<n_2\le Kn_1$ for some $K>0$, then
    there exists a constant $C=C(p,\om,K)>0$ such that
    \begin{equation*}
    \|S_{n_1,n_2}H\|_{H^p} \le C\left(\om_{\frac{n_1-1}{2}}\right)^{-1}\| g\|_{H^p}.
    \end{equation*}
    \end{enumerate}
\end{lemma}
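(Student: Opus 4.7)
The plan is to represent both $S_{n_1,n_2}h$ and $S_{n_1,n_2}H$ as scalar multiples of Hadamard products $W_N^\Phi\ast g$ against a $C^\infty$ symbol $\Phi$ of compact support in $(0,\infty)$, and then to apply Theorem~\ref{th:cesaro}. The enabling observation is that the moment sequence $k\mapsto\om_k$ extends naturally to the real-analytic function $\om_s=\int_0^1 r^{2s+1}\om(r)\,dr$, $s>-1$, which provides the smooth interpolation needed to build~$\Phi$.

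Fix $N=(n_1-1)/2$ and a bump $\eta\in C_c^\infty(\R)$ with $\supp\eta\subset(1,3K+3)$ and $\eta\equiv 1$ on $[2,2K+2]$. For part~(ii), set $\Phi(x)=\eta(x)\,\om_N/\om_{Nx}$. Since $n_2\le Kn_1$, every $k\in[n_1,n_2-1]$ satisfies $k/N\in[2,2K+2]$, so $\Phi(k/N)=\om_N/\om_k$ on that range; after a minor refinement of $\eta$ to vanish at the remaining integers in $\supp\eta(\cdot/N)$, the product $\om_N\cdot(W_N^\Phi\ast g)$ coincides with $S_{n_1,n_2}H$. Theorem~\ref{th:cesaro} applied with $m$ such that $mp>1$ then reduces~(ii) to a uniform bound $A_{\Phi,m}\le C(\om,m,K)$.

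Bounding $A_{\Phi,m}$ uniformly in $N$ is the core technical task. By the chain rule, $\Phi^{(j)}(x)$ is a polynomial in the $\eta^{(i)}(x)$ and $N^{j-i}$ times rational expressions in $\om_s^{(\ell)}|_{s=Nx}$, with
\[
\om_s^{(\ell)}=\int_0^1 r^{2s+1}(\log r^2)^\ell\om(r)\,dr.
\]
The mass of this integrand concentrates in the layer $1-r\asymp 1/s$, on which $|\log r|\asymp 1/s$; together with $\om\in\DD$ and Lemma~\ref{Lemma:replacement-Lemmas-Memoirs}(iii) this yields both $|\om_s^{(\ell)}|\lesssim s^{-\ell}\om_s$ and the comparability $\om_s\asymp\om_N$ on $s\in[N,(3K+3)N]$. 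These estimates precisely balance the chain-rule factor $N^{j-i}$, forcing $A_{\Phi,m}\lesssim 1$, which completes~(ii).

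For part~(i), if $p>1$ the inequality follows at once from Lemma~\ref{le:A}(b) applied to the nonincreasing sequence $\lambda_k=\om_k$, combined with the M.~Riesz bound $\|S_{n_1,n_2}g\|_{H^p}\lesssim\|g\|_{H^p}$ and the monotonicity $\om_{n_1}\le\om_{(n_1-1)/2}$. For $0<p\le 1$, partial sums fail to be bounded on $H^p$, so one decomposes $[n_1,n_2-1]$ into exponentially growing blocks $[2^jn_1,2^{j+1}n_1)$, applies the part~(ii) construction on each (with $K=2$), and sums via the $p$-subadditivity $\|\sum F_j\|_{H^p}^p\le\sum\|F_j\|_{H^p}^p$; the geometric decay $\om_{2^jn_1}\lesssim 2^{-j\beta_0}\om_{n_1}$ supplied by Lemma~\ref{Lemma:replacement-Lemmas-Memoirs}(ii) yields a convergent geometric series dominated by $\om_{(n_1-1)/2}^p\|g\|_{H^p}^p$. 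The principal obstacle throughout is the uniform control of $A_{\Phi,m}$, which rests on the decay estimate $|\om_s^{(\ell)}|\lesssim s^{-\ell}\om_s$ described above.
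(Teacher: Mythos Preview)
Your construction for part~(ii) matches the paper's: both realize $S_{n_1,n_2}H$ as a Hadamard product against a smooth multiplier built from $s\mapsto\om_s^{-1}$, and both control $A_{\Phi,m}$ via the derivative bound $|\om_s^{(\ell)}|\lesssim s^{-\ell}\om_s$ (which indeed uses $\om\in\DD$) together with the comparability $\om_s\asymp\om_N$ on $s\in[N,(3K+3)N]$. The ``minor refinement of $\eta$ to vanish at the remaining integers'' is not quite minor---those extra integers correspond to points $k/N$ spaced $1/N$ apart, so a smooth cutoff killing them would have $m$-th derivative $\sim N^m$---but the paper sidesteps this by taking $N=n_1$ and $\supp\Phi\subset\bigl(1-\tfrac1{2n_1},\tfrac{n_2}{n_1}\bigr)$, so that the only integers hit are exactly $n_1,\dots,n_2-1$. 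For part~(i) with $p>1$, your route through Lemma~\ref{le:A}(b) and the M.~Riesz projection is correct and a bit shorter than the paper's uniform multiplier argument.

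The genuine gap is in part~(i) for $0<p\le1$. First, part~(i) assumes only that $\om$ is a radial weight---no $\DD$ hypothesis---yet both your derivative estimate and your summation invoke $\DD$. Second, and more seriously, the claimed geometric decay $\om_{2^jn_1}\lesssim2^{-j\beta_0}\om_{n_1}$ is false even under $\DD$: Lemma~\ref{Lemma:replacement-Lemmas-Memoirs}(ii) gives $\widehat\om(r)\le C\bigl(\tfrac{1-r}{1-t}\bigr)^\beta\widehat\om(t)$ for $r\le t$, which translates to the \emph{lower} bound $\om_{2^jn_1}\gtrsim C^{-j}\om_{n_1}$, not an upper bound. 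For instance, if $\om(r)=(1-r)^{-1}\bigl(\log\tfrac{e}{1-r}\bigr)^{-2}\in\I\subset\DD$ then $\om_k\asymp(\log k)^{-1}$ and $\sum_j\om_{2^jn_1}^p$ diverges for every $p\le1$. The paper avoids dyadic summation altogether: writing $\Upsilon_{n_1}(s)=\int_0^1 r^{2n_1s+1}\om(r)\,dr$, the elementary inequality $\sup_{0<x<1}\bigl(\log\tfrac1x\bigr)^m x^{1/2}<\infty$ yields $|\Upsilon_{n_1}^{(m)}(s)|\le C(m)\,\om_{(n_1-1)/2}$ uniformly for \emph{all} $s\ge1-\tfrac1{2n_1}$, so a single multiplier handles the entire range $[n_1,n_2)$ at once, with no assumption beyond $\om$ being a radial weight.
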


\begin{proof} (i). Define
    $$
    \Upsilon_{n_1}(s)=\int_0^1 r^{2n_1s+1}\om(r)\,dr,\quad s\ge 0.
    $$
Clearly, $\Upsilon_{n_1}$ is a $C^\infty$-function and
    \begin{equation}\label{eq:up1}
    \left| \Upsilon_{n_1}(s)\right|
    \le\int_0^1 r^{n_1}\om(r)\,dr,\quad s\ge1-\frac{1}{2n_1}.
    \end{equation}
Further, since
    $
    C(m)=\sup_{0<x<1}\left(\log
    \frac{1}{x}\right)^m x^{1/2}<\infty,
    $
we have
\begin{equation}
\begin{split}\label{eq:up2}
\left| \Upsilon^{(m)}_{n_1}(s)\right| &
\le \int_0^1\left[\left(\log\frac{1}{r^{2n_1}}\right)^m r^{n_1}\right]\,r^{2n_1s+1-n_1}\om(r)\,dr
\\  & \le C(m)\int_0^1 r^{n_1}\om(r)\,dr,\quad s\ge1-\frac{1}{2n_1}.
\end{split}
\end{equation}
Therefore, by using \eqref{eq:up1} and \eqref{eq:up2}, we can find
a function $\Phi_{n_1}\in C^\infty$ such that
$\supp(\Phi_{n_1})\in\left(1-\frac{1}{2n_1},
\frac{n_2}{n_1}\right)$,
    $$
    \Phi_{n_1}(s)= \Upsilon_{n_1}(s),\quad s\in\,\left[1,\frac{n_2-1}{n_1}\right],
    $$
and
    $$
    A_{\Phi_{n_1},m}=\max_{s\in\mathbb{R}}|\Phi_{n_1}(s)|+\max_{s\in\mathbb{R}}|\Phi_{n_1}^{(m)}(s)|\le
    C(m)\om_{\frac{n_1-1}{2}}.
    $$
Therefore we can write
    \begin{align*}
    S_{n_1,n_2}h(z)&=
    \sum_{k=n_1}^{n_2-1}c_k\om_kz^k
    \\ & =\sum_{k=n_1}^{n_2-1}c_k\Phi_{n_1}\left (\frac{k}{n_1}\right )z^k
    =\left(W_{n_1}^{\Phi_{n_1}}\ast g\right)(z),\quad z\in\T.
    \end{align*}
Hence, by fixing $m$ sufficiently large so that $mp>1$, and using Theorem~\ref{th:cesaro},
we obtain
    \begin{equation*}
    \begin{split}
    \| S_{n_1,n_2}h\|_{H^p}
    = \|W_{n_1}^{\Phi_{n_1}}\ast g\|_{H^p}
     \le C_2A_{\Phi_{n_1},m}\|g\|_{H^p}
    \le C(m)C_2 \om_{\frac{n_1-1}{2}}
    \|g\|_{H^p},
    \end{split}
    \end{equation*}
where $C_2=C_2(p)>0$ is a constant. Thus (i) is proved.

(ii). We set $\varphi_{n_1}(s)=\left(\Upsilon_{n_1}(s)\right)^{-1}$ and will prove that
    \begin{equation}\label{m}
    \begin{split}
    A_{\phi_{n_1},m}&=\max_{ 1-\frac{1}{2n_1}\le s\le \frac{n_2}{n_1}}|\phi_{n_1}(s)|
    +\max_{ 1-\frac{1}{2n_1}\le s\le \frac{n_2}{n_1}}|\phi_{n_1}^{(m)}(s)|\\
    &\le C(m,\om,K)\left(\om_{\frac{n_1-1}{2}}\right)^{-1},\quad m\in\N\cup\{0\}.
    \end{split}
    \end{equation}
Since
    $$
    \varphi_{n_1}(s)\le \frac{1}{\int_0^1 r^{2n_2+1}\om(r)\,dr},\quad 0\le s\le \frac{n_2}{n_1},
    $$
Lemma~\ref{Lemma:replacement-Lemmas-Memoirs} and the hypothesis $n_2\le K n_1$ yield
    \begin{equation}
    \begin{split}\label{eq:n1}
    \varphi_{n_1}(s)&\le\frac{\int_0^1 r^{n_1}\om(r)\,dr}{\int_0^1 r^{2n_2+1}\om(r)\,dr}\left( \om_{\frac{n_1-1}{2}}\right)^{-1}
    \asymp\frac{\widehat{\om}\left(1-\frac{1}{n_1}\right)}{\widehat{\om}\left(1-\frac{1}{2n_2+1}\right)}\left(\om_{\frac{n_1-1}{2}}\right)^{-1}\\
    &\lesssim\left(\frac{2n_2+1}{n_1}\right)^\b \left( \om_{\frac{n_1-1}{2}}\right)^{-1}
    \lesssim\left( \om_{\frac{n_1-1}{2}}\right)^{-1}
    \end{split}
    \end{equation}
for all $0\le s\le \frac{n_2}{n_1}$, where $\beta=\beta(\om)\in (0,\infty)$. This gives \eqref{m} for
$m=0$.

If $m=1$, we may use \eqref{eq:up2} and \eqref{eq:n1} to obtain
    $$
    |\varphi'_{n_1}(s)|=\frac{|\Upsilon'_{n_1}(s)|}{|\Upsilon_{n_1}(s)|^2}=|\Upsilon'_{n_1}(s)||\varphi_{n_1}(s)|^2 \lesssim\left( \om_{\frac{n_1-1}{2}}\right)^{-1}
    $$
for all $1-\frac{1}{2n_1}\le s\le \frac{n_2}{n_1}$. The general
case is now proved by induction. Assume that \eqref{m} holds
for $j=1,\dots,m-1$, where $m>1$. Since
$1=\varphi_{n_1}(s)\Upsilon_{n_1}(s)$, we have
    $$
    0=(\varphi_{n_1}\Upsilon_{n_1})^{(m)}(s)=\sum_{j=0}^m
    \binom{m}{j} \Upsilon_{n_1}^{(m-j)}(s)\varphi^{(j)}_{n_1}(s),
    $$
which implies
    $$
    |\varphi_{n_1}^{(m)}(s)|\le\frac{\sum_{j=0}^{m-1}\binom{m} {j}\left|\Upsilon_{n_1}^{(m-j)}(s)\varphi^{(j)}_{n_1}(s)\right|}{|\Upsilon_{n_1}(s)|}.
    $$
This together with the induction hypothesis and \eqref{eq:up2} gives \eqref{m}. The proof can be completed arguing as in (i). We omit the details.
\end{proof}

We now turn to $L^p$-estimates. For that purpose we will use the fact that if $\{e_n\}$ is an orthonormal basis of a Hilbert
space $H$, that is continuously embedded into $\H(\D)$, then its reproducing kernel is given by
    \begin{equation}\label{RKformula}
    K_z(\zeta)=\sum_n e_ n(\zeta)\,\overline{e_ n(z)}
    \end{equation}
for all $z$ and $\zeta$ in $\D$, see~\cite[Theorem~4.19]{Zhu}. We shall write  $\omega_\b(z)=(1-|z|)^\b\omega(z)$ for all
$\b\in\mathbb{R}$ and $z\in\D$.

\begin{lemma}\label{HL}
Let $\om\in\DD$ and $n\in\N\cup\{0\}$.
\begin{enumerate}
\item[\rm(i)] If $0<p\le2$, then
    $$
  M_p^p\left(r,\left(B^\om_a\right)^{(N)}\right)\gtrsim
     \int_0^{|a|r}\frac{dt}{\widehat{\om}(t)^{p}(1-t)^{p(N+1)}},\quad r,|a|\to1^-.
    $$
  \item[\rm(ii)] If $2\le p<\infty$, then
    $$
    M_p^p\left(r,\left(B^\om_a\right)^{(N)}\right)\lesssim
     \int_0^{|a|r}\frac{dt}{\widehat{\om}(t)^{p}(1-t)^{p(N+1)}},\quad r,|a|\to1^-.
    $$
\end{enumerate}
\end{lemma}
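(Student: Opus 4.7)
The plan is to expand the reproducing kernel in the orthonormal basis of monomials, differentiate term-by-term, and then apply the classical Hardy-Littlewood inequalities that compare the $H^p$-norm to a weighted $\ell^p$-sum of Taylor coefficients. Since $\om$ is radial, the system $e_n(z)=z^n/\sqrt{2\om_n}$ with $\om_n=\int_0^1 r^{2n+1}\om(r)\,dr$ is an orthonormal basis of $A^2_\om$, so \eqref{RKformula} gives
\begin{equation*}
B^\om_a(z)=\sum_{n=0}^\infty\frac{(\bar a z)^n}{2\om_n},\qquad \bigl(B^\om_a\bigr)^{(N)}(z)=\sum_{k=0}^\infty c_k z^k,
\end{equation*}
with $c_k=\frac{(k+N)!/k!}{2\om_{k+N}}\bar a^{k+N}$, so $|c_k|\asymp(k+1)^N\om_{k+N}^{-1}|a|^{k+N}$.

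Applying Hardy-Littlewood to $z\mapsto(B^\om_a)^{(N)}(rz)$, whose Taylor coefficients are $c_k r^k$, yields
\begin{equation*}
M_p^p\bigl(r,(B^\om_a)^{(N)}\bigr)\gtrsim\sum_{k=0}^\infty(k+1)^{p-2}|c_k|^p r^{pk}\quad\text{for }0<p\le 2,
\end{equation*}
and the reverse inequality for $p\ge 2$. By Lemma~\ref{Lemma:replacement-Lemmas-Memoirs}(iii), $\om_{k+N}\asymp\widehat{\om}(1-1/(k+1))$, so (after absorbing the harmless factor $|a|^{pN}\to 1$) the series on the right-hand side is comparable to
\begin{equation*}
\sum_{k=0}^\infty\frac{(k+1)^{p(N+1)-2}}{\widehat{\om}(1-1/(k+1))^p}(|a|r)^{pk}.
\end{equation*}

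It remains to identify this series with the integral on the right-hand side of the lemma. A standard dyadic splitting, or equivalently the substitution $t=1-1/k$, gives
\begin{equation*}
\int_0^s\frac{dt}{\widehat{\om}(t)^p(1-t)^{p(N+1)}}\asymp\sum_{k=1}^{\lfloor 1/(1-s)\rfloor}\frac{(k+1)^{p(N+1)-2}}{\widehat{\om}(1-1/(k+1))^p}.
\end{equation*}
For (i), the lower bound is then immediate upon restricting the series to $k\le K:=\lfloor 1/(1-|a|r)\rfloor$, on which $(|a|r)^{pk}\gtrsim e^{-p}$. The harder part is (ii), where the entire series up to $k=\infty$ must be controlled by the truncated integral; this tail estimate is the main obstacle. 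Here the hypothesis $\om\in\DD$ is essential: Lemma~\ref{Lemma:replacement-Lemmas-Memoirs}(ii) provides $\widehat{\om}(1-1/(k+1))^{-p}\lesssim ((k+1)/K)^{p\beta}\widehat{\om}(1-1/K)^{-p}$ for $k\ge K$, i.e.\ at most polynomial growth of $\widehat{\om}^{-p}$, whereas $(|a|r)^{pk}$ decays super-polynomially once $k-K\gg K$. Splitting the range $k>K$ into dyadic blocks $2^m K<k\le 2^{m+1}K$ and summing $m\ge 0$, the tail is dominated by its contribution at the scale $k\asymp K$, hence absorbed into the integral. This balance between polynomial growth of $\widehat{\om}^{-p}$ and geometric decay of $(|a|r)^{pk}$ is the technical core of the argument; the remaining steps are purely bookkeeping.
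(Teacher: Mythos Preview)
Your proposal is correct and follows essentially the same route as the paper: expand $B^\om_a$ in the monomial basis, apply the Hardy--Littlewood inequalities to the dilated function, and reduce both parts to the asymptotic $\sum_k (k+1)^{p(N+1)-2}\widehat{\om}(1-1/(k+1))^{-p}(|a|r)^{pk}\asymp\int_0^{|a|r}\widehat{\om}(t)^{-p}(1-t)^{-p(N+1)}\,dt$, with the tail controlled via the doubling property of~$\widehat{\om}$. The only cosmetic difference is in the tail estimate for (ii): the paper extracts the essentially monotone factor $(n+1)^{-M}\widehat{\om}(1-1/n)^{-p}$ (Lemma~\ref{Lemma:replacement-Lemmas-Memoirs}) and sums the remaining power series $\sum_n (n+1)^{p(N+1)-2+M}r^{pn}$ in closed form, whereas you arrive at the same bound via dyadic blocks.
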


\begin{proof}
By using the standard orthonormal basis
$\{z^{j}/\sqrt{2\om_{j}}\}$, $j\in \N\cup\{0\}$, of
$A^2_\om$ and \eqref{RKformula} we obtain
    \begin{equation}\label{kernelformula}
    B^\om_a(z)=\sum_{n=0}^\infty\frac{(z\overline{a})^n}{2\om_n},
    \end{equation}
which implies
    \begin{equation*}
    \left(B^\om_a\right)^{(N)}(z)=\sum_{j= N}^\infty
    \frac{j(j-1)\cdots(j-N+1)z^{j-N}\overline{a}^{j}}{2\om_{j}},\quad n\in\N.
    \end{equation*}
Therefore the classical Hardy-Littlewood inequalities \cite[Theorem~6.2]{Duren1970} applied to the dilated function show that it suffices to prove
    \begin{equation}\label{series}
    \sum_{n=N}^\infty\frac{r^{pn}}{(n+1)^{-(N+1)p+2}\om_{n}^{p}}\asymp \int_0^{r}\frac{dt}{\widehat{\om}_{N+1}(t)^{p}},
    \quad r \to 1^-.
    \end{equation}
Assume, without loss of generality, that $r>1-\frac1{N+1}$. Choose now $N^\star\in\N$ such that $1-\frac{1}{N^\star}\le r<1-\frac{1}{N^\star+1}$. Then Lemma~\ref{Lemma:replacement-Lemmas-Memoirs} yields
    \begin{equation*}
    \begin{split}
    \sum_{n=N}^{N^\star}\frac{r^{pn}}{(n+1)^{-(N+1)p+2}\om_{n}^{p}}
    &\asymp \sum_{n=N}^{N^\star}\frac{1}{(n+1)^{-(N+1)p+2}\om_{n}^{p}}\\
    &\asymp\sum_{n=N}^{N^\star}\frac{1}{(n+1)^{-N(p+1)+2}\widehat{\om}(1-\frac{1}{2n+1})^{p}}\\
    &\gtrsim\int_{N+1}^{N^\star+1}\frac{ds}{s^{-N(p+1)+2}\widehat{\om}(1-\frac{1}{s})^{p}}\\
    & \ge \int_{N+1}^{\frac{1}{1-r}}\frac{ds}{s^{-N(p+1)+2}\widehat{\om}(1-\frac{1}{s})^{p}}\\
    &=\int_{1-\frac{1}{N+1}}^{r}\frac{dt}{\widehat{\om}_{N+1}(t)^{p}}\asymp \int_{0}^{r}\frac{dt}{\widehat{\om}_{N+1}(t)^{p}},\quad r \to 1^-.
    \end{split}
    \end{equation*}
Lemma~\ref{Lemma:replacement-Lemmas-Memoirs} allows us to establish the same upper bound in a similar manner, so
  \begin{equation*}
    \begin{split}
    \sum_{n=N}^{N^\star}\frac{r^{pn}}{(n+1)^{-(N+1)p+2}\om_{n}^{p}}
   \asymp \int_{0}^{r}\frac{dt}{\widehat{\om}_{N+1}(t)^{p}},\quad r\to 1^-.
    \end{split}
    \end{equation*}
Lemma~\ref{Lemma:replacement-Lemmas-Memoirs} also implies
    \begin{equation}\label{bi}
    \frac{1}{\widehat{\om}(r)^{p}(1-r)^{p(N+1)-1}}
    \asymp\int_{\frac{4r-1}{3}}^r  \frac{dt}{\widehat{\om}_{N+1}(t)^{p}},\quad r\to 1^-,
    \end{equation}
and the existence of $M=M(p,\om)>1$ such that $\frac{\widehat{\om}(r)^p}{(1-r)^M}$ is essentially increasing. Hence
    \begin{equation*}
    \begin{split}
    \sum_{n=N^\star}^\infty\frac{r^{pn}}{(n+1)^{-(N+1)p+2}\om_{n}^{p}}
    &\asymp \sum_{n=N^\star}^\infty \frac{r^{n}}{(n+1)^{-N(p+1)+2}\widehat{\om}(1-\frac{1}{n})^{p}}
    \\ & \lesssim \frac{1}{(N^\star+1)^M \widehat{\om}(1-\frac{1}{N^\star})^{p}}
    \sum_{n=N^\star}^\infty (n+1)^{p(N+1)-2+M}r^{n}
    \\ & \asymp \frac{ (N^\star+1)^{p(N+1)-1}}{ \widehat{\om}(1-\frac{1}{N^\star})^{p}}
    \asymp\frac{1}{(1-r)^{p(N+1)-1}\widehat{\om}(r)^{p}}
   \\ &  \asymp\int_{\frac{4r-1}{3}}^r  \frac{dt}{\widehat{\om}_{N+1}(t)^{p}}
    \le \int_{0}^{r}\frac{dt}{\widehat{\om}_{N+1}(t)^{p}},\quad r\to 1^-,
    \end{split}
    \end{equation*}
and the proof is complete.
\end{proof}

\subsection{Proof of Theorem~\ref{th:kernelstimate}.}

We begin with proving (ii) for $v\in\R$ continuous. In this case we have two advantages compared to the general case $v\in\DD$. First, the main result in
~\cite{PavP} implies the Littlewood-Paley formula
    \begin{equation}\label{LPformula}
    \|f\|_{A^p_v}^p\asymp\int_{\D}|f^{(n)}(z)|^p(1-|z|)^{np}v(z)\,dA(z)+\sum_{j=0}^{n-1}|f^{(j)}(0)|^p,\quad f\in \H(\D),
    \end{equation}
for all $0<p<\infty$, $v\in\R$ and $n\in\N$. This allows us to assume that the order $N$ of the derivative is sufficiently large, and in that way we avoid some difficulties in the proof. Note that \eqref{LPformula} fails in general for $v\in\I\subset\DD$ by~\cite[Proposition~4.3]{PelRat}. Second, when $v\in\R$, we have precise control over the size of the blocks $\Delta^v_n f$
appearing in the decomposition of the $A^p_v$-norm of $f$.

\medskip

{\bf{ Part~(ii). Case~$\mathbf{v\in\R}$} continuous.}
By the Littlewood-Paley formula \eqref{LPformula} we may assume that $N>\frac{1}{p}-1$.
Without loss of generality, we may also assume $\int_0^1 v(r)\,dr=1$.
Therefore Lemma~\ref{le:de1} (the case $p\le1$) or
\cite[Theorem~4]{PelRathg} (the case $p>1$),
\eqref{kernelformula} and Lemma~\ref{le:A} give
    \begin{equation}
    \begin{split}\label{eq:ke1}
    &\int_{\D}\left|\left(B^\om_a\right)^{(N)}(z)\right|^p v(z)\,dA(z)
    =\int_{\D}\left|\left(B^\om_{|a|}\right)^{(N)}(z)\right|^p v(z)\,dA(z)\\
    &\lesssim\sum_{n=0}^\infty 2^{-n}\left\|\Delta^{v}_n
    \left(B^\om_{|a|}\right)^{(N)}\right\|_{H^p}^p\\
    &=\sum_{n=0}^\infty 2^{-n}\left\|\sum_{j\in I_{v}(n),j\ge N}
    \frac{j(j-1)\cdots(j-N+1)z^{j-N}|a|^{j}}{2\om_j}\right\|_{H^p}^p\\
    &\lesssim\sum_{n=0}^\infty 2^{-n}|a|^{M_n}\left\|\sum_{j\in I_{v}(n),j\ge N}
    \frac{j(j-1)\cdots(j-N+1)z^{j-N}}{2\om_j}\right\|_{H^p}^p.
    \end{split}
    \end{equation}
Now, since $v$ is regular, \cite[Lemma~6]{PelRathg} implies $\sup_{n\ge 0}\frac{M_{n+1}}{M_n}<\infty$, where $M_n=E\left(\frac{1}{1-r_n}\right)$ are associated to $v$ via $\widehat{v}(r_n)=2^{-n}$. So, by using this, Lemma~\ref{le:decre}(ii), \cite[Lemma~10]{PelRathg} and the assumption $N>\frac{1}{p}-1$, we get
    \begin{equation}\label{11111}
    \begin{split}
    &\left\|\sum_{j\in I_{v}(n),j\ge N}
    \frac{j(j-1)\cdots(j-N+1)z^{j-N}}{2\om_j}\right\|_{H^p}^p
    \\ & \lesssim
    \frac{1}{\left(\om_{\frac{M_n-1}{2}}\right)^p}
    M^p_p\left( 1-\frac{1}{M_{n+1}},\frac{d^{(N)}}{dz^{N}}
    \left(\frac{1}{1-z}\right)\right)
    \\ & \lesssim  \frac{1}{\left(\om_{\frac{M_n-1}{2}}\right)^p}
    M^p_p\left( 1-\frac{1}{M_{n+1}},\frac{1}{(1-z)^{N+1}}\right)
    \\ & \lesssim \frac{M^{(N+1)p-1}_{n+1}}{\left(\om_{\frac{M_n-1}{2}}\right)^p}
    \lesssim \frac{M^{(N+1)p-1}_{n}}{\left(\om_{\frac{M_n-1}{2}}\right)^p}.
    \end{split}
    \end{equation}
Next, \eqref{LP1} for $f(z)=z^{n}$ and Lemma~\ref{Lemma:replacement-Lemmas-Memoirs} applied to $\om^\star\in\DD$ yield
    \begin{equation}\label{moments}
    \om_{n}=4n^2\om^\star_{n-1}\asymp n^2\om^\star_{n},
    \end{equation}
which together with \eqref{eq:ke1} and \eqref{11111} implies
    \begin{equation}\label{eq:ke2}
    \int_{\D}\left|\left(B^\om_a\right)^{(N)}(z)\right|^p v(z)\,dA(z)
    \lesssim\sum_{n=0}^\infty \frac{2^{-n}M^{(N-1)p-1}_{n}}
    {\left(\om^\star_{\frac{M_n-1}{2}}\right)^p}|a|^{M_n}.
    \end{equation}
The last step in this part of the proof consists of bounding the series in
\eqref{eq:ke2}.

Since $v$ is a regular weight, the definition $\widehat{v}(r_n)=2^{-n}$ and Lemma~\ref{Lemma:replacement-Lemmas-Memoirs} imply $v^\star (r_n)\asymp 2^{-n}M_n^{-1}$.
Moreover, since $\om^\star\in\DD$, Lemma~\ref{Lemma:replacement-Lemmas-Memoirs} and $\om^\star(z)\asymp\widehat{\om}(z)(1-|z|)$ yield
   \begin{equation}\label{eqmo}
    \om^\star_{\frac{M_n-1}{2}}\asymp \widehat{\om^\star}\left(1-\frac1{M_n}\right)
    \asymp \om^\star\left(1-\frac{1}{M_n}\right)M_n^{-1}.
    \end{equation}
Therefore, by using \cite[Lemma~6]{PelRathg} and Lemma~\ref{Lemma:replacement-Lemmas-Memoirs}, we deduce
    \begin{equation}
    \begin{split}\label{eq:ke3}
    &\sum_{n=0}^\infty \frac{2^{-n}M^{(N-1)p-1}_{n} }{\left(\om^\star_{\frac{M_n-1}{2}}\right)^p}|a|^{M_n}
      \asymp 1+\sum_{n=1}^\infty \frac{v^\star (r_n)M^{Np}_{n} }{\left( \om^\star\left(1-\frac{1}{M_n}\right)\right)^p}|a|^{M_n}\\
      &\asymp 1+\sum_{n=1}^\infty \frac{v^\star \left(1-\frac{1}{M_n}\right)M^{Np-1}_{n} }{\left( \om^\star\left(1-\frac{1}{M_n}\right)\right)^p}\left(M_n-M_{n-1}\right)|a|^{M_n}
  \\  &\le 1+\sum_{n=1}^\infty \frac{v^\star \left(1-\frac{1}{M_n}\right)M^{Np-1}_{n}}{\left( \om^\star\left(1-\frac{1}{M_n}\right)\right)^p}
    \sum_{j\in I_{v}(n-1)}|a|^{j}\\
      &\asymp 1+\sum_{n=1}^\infty
    \sum_{j\in I_{v}(n-1)}  \frac{v^\star \left(1-\frac{1}{j+1}\right)(j+1)^{Np-1}}{\left( \om^\star\left(1-\frac{1}{j+1}\right)\right)^p} |a|^{j}
    \\ & \asymp 1+\sum_{j=1}^\infty \frac{v^\star \left(1-\frac{1}{j+1}\right)(j+1)^{Np-1}}{\om^\star\left(1-\frac{1}{j+1}\right)^p} |a|^{j}.
    \end{split}
    \end{equation}
Let now $|a|\ge\frac34$. We observe that Lemma~\ref{Lemma:replacement-Lemmas-Memoirs} imply
    \begin{equation}\begin{split}\label{domi}
    \frac {v^\star \left(a\right)}{\om^\star(a)^{p}(1-|a|)^{Np}}&\asymp
    \int_{2|a|-1}^{|a|}\frac{\widehat{v}\left(s\right)}{\widehat{\om}(s)^p(1-s)^{(N+1)p}}\,ds
    \le \int_{0}^{|a|}\frac{\widehat{v}\left(s\right)}{{\widehat{\om}_{N+1}(s)^{p}}}\,ds.
    \end{split}\end{equation}
Next, take $N^\star\in\N$ such that $1-\frac{1}{N^\star}\le
|a|<1-\frac{1}{N^\star+1}$. Then, by \eqref{domi},
    \begin{equation}
    \begin{split}\label{eq:ke4}
    \sum_{j=1}^{N^\star} \frac{v^\star \left(1-\frac{1}{j+1}\right)(j+1)^{Np-1}}{\om^\star\left(1-\frac{1}{j+1}\right)^p} |a|^{j}
    & \lesssim \int_2^{\frac1{1-|a|}+2}\frac{v^\star\left(1-\frac1x\right)}{\om^\star\left(1-\frac1x\right)^p}x^{Np-1}\,dx\\
    &=\int_{1/2}^{\frac{1+|a|}{2}}\frac{v^\star\left(s\right)}{\om^\star\left(s\right)^p(1-s)^{Np+1}}\,ds\\
   &\lesssim\int_0^{|a|}\frac{\widehat{v}\left(s\right)}{\widehat{\om}_{N+1}(s)^{p}}\,ds
    \end{split}
    \end{equation}
for all $|a|\ge\frac34$. On the other hand, the function $h(r)=\widehat\om(r)(1-r)^{-\b}$ is essentially increasing on $[0,1)$ for $\b=\b(\om)$ sufficiently large by Lemma~\ref{Lemma:replacement-Lemmas-Memoirs}, and therefore
    $$
    \left\{(j+1)^{1+\b}\om^\star \left(1-\frac{1}{j+1}\right)\right\}_{j=1}^\infty
    $$
is an essentially increasing sequence. This and \eqref{domi} together with the fact that
    $$
    \left\{(j+1)v^\star \left(1-\frac{1}{j+1}\right)\right\}_{j=1}^\infty
    $$
is essentially decreasing, give
    \begin{equation*}
    \begin{split}
    &\sum_{j=N^\star+1}^\infty \frac{v^\star \left(1-\frac{1}{j+1}\right)(j+1)^{Np-1}}{\om^\star\left(1-\frac{1}{j+1}\right)^p} |a|^{j}
    \\ & \lesssim v^\star \left(1-\frac{1}{N^\star+2}\right)(N^\star+2)
    \sum_{j=N^\star+1}^\infty \frac{(j+1)^{Np-2}}{\om^\star\left(1-\frac{1}{j+1}\right)^p} |a|^{j}
    \\ & \lesssim \frac{v^\star \left(1-\frac{1}{N^\star+2}\right)(N^\star+2)^{1-\left(1+\b\right)p}}{\om^\star \left(1-\frac{1}{N^\star+2}\right)^p}
    \sum_{j=N^\star+1}^\infty (j+1)^{(N+1+\b)p-2} |a|^{j}
    \\ & \asymp \frac{v^\star \left(a\right)}{(1-|a|)^{1-\left(1+\b\right)p}\om^\star \left(a\right)^p}
    \sum_{j=N^\star+1}^\infty (j+1)^{(N+1+\b)p-2} |a|^{j}
    \\ &  \asymp \frac {v^\star \left(a\right)}{(1-|a|)^{Np}\left(\om^\star(a)\right)^{p}}
    \lesssim \int_0^{|a|}\frac{\widehat{v}\left(s\right)}{\widehat{\om}_{N+1}(s)^{p}}\,ds,
    \end{split}
    \end{equation*}
where in the last asymptotic equality we used our choice $N>\frac{1}{p}-1$.
This combined with \eqref{eq:ke2}, \eqref{eq:ke3} and
\eqref{eq:ke4} finishes the proof of the upper bound in \eqref{k1}, when $v\in\R$ is continuous.
\smallskip

In order to establish the same lower estimate, we will consider the cases $p>1$ and $0< p\le 1$ separately.
Let first $p>1$. By \cite[Theorem~4]{PelRathg}, \eqref{kernelformula}, Lemma~\ref{le:A},  \cite[Lemma~10]{PelRathg} and \eqref{moments} we deduce
    \begin{equation*}
    \begin{split}
       &\int_{\D}\left|\left(B^\om_a\right)^{(N)}(z)\right|^p v(z)\,dA(z)
    =\int_{\D}\left|\left(B^\om_{|a|}\right)^{(N)}(z)\right|^p v(z)\,dA(z)\\
    &\asymp\sum_{n=0}^\infty 2^{-n}\left\|\Delta^{v}_n
    \left(B^\om_{|a|}\right)^{(N)}\right\|_{H^p}^p\\
    &=\sum_{n=0}^\infty 2^{-n}\left\|\sum_{j\in I_{v}(n),j\ge N}
    \frac{j(j-1)\cdots(j-N+1)z^{j-N}|a|^{j}}{2\om_j}\right\|_{H^p}^p\\
    &\gtrsim\sum_{n=0}^\infty 2^{-n}|a|^{M_{n+1}}\left\|\sum_{j\in I_{v}(n),j\ge N}
    \frac{j(j-1)\cdots(j-N+1)z^{j-N}|a|^{j}}{2\om_j}\right\|_{H^p}^p
   \\ &\gtrsim\sum_{n=0}^\infty 2^{-n}|a|^{M_{n+1}}\frac{1}{\left(\om_{M_n}\right)^p}
     M^p_p\left( 1-\frac{1}{M_{n+1}},\frac{1}{(1-z)^{N+1}}\right)\\
    &\asymp\sum_{M_n\ge N}^\infty 2^{-n}|a|^{M_{n+1}}\frac{M^{(N-1)p-1}_{n+1}}{\left(\om^\star_{M_n}\right)^p}
    \asymp \sum_{M_{n-1}\ge N} \frac{2^{-n}M^{(N-1)p-1}_{n}}
   {\left(\om^\star_{M_n}\right)^p}|a|^{M_{n}},
    \end{split}
    \end{equation*}
where in the last step we have used the fact $M_n\asymp M_{n+1}$ and Lemma~\ref{Lemma:replacement-Lemmas-Memoirs} for $\om^\star\in\DD$.
Next, by using \eqref{eqmo}, \cite[Lemma~6]{PelRathg}, Lemma~\ref{Lemma:replacement-Lemmas-Memoirs}
and arguing in a manner similar to \eqref{eq:ke3}, we deduce
    \begin{equation*}
    \begin{split}\label{eq:ke3r}
    &\sum_{M_{n-1}\ge N} \frac{2^{-n}M^{(N-1)p-1}_{n} }{\left(\om^\star_{M_n}\right)^p}|a|^{M_n}
   \asymp \sum_{j\ge N} \frac{v^\star \left(1-\frac{1}{j+1}\right)(j+1)^{Np-1}}{\om^\star\left(1-\frac{1}{j+1}\right)^p} |a|^{j}.
    \end{split}\end{equation*}
Without loss of generality, we may assume $|a|>\max\{1-\frac1{N-1},\frac34\}$. Take $N^\star\in\N$ such that $1-\frac{1}{N^\star}\le
|a|<1-\frac{1}{N^\star+1}$. Then, arguing in a way similar to \eqref{eq:ke4} and bearing in mind \eqref{domi}, we get
 \begin{equation*}
    \begin{split}
    \sum_{j=N}^{N^\star+1} \frac{v^\star \left(1-\frac{1}{j+1}\right)(j+1)^{Np-1}}{\om^\star\left(1-\frac{1}{j+1}\right)^p} |a|^{j}
   \gtrsim\int_0^{|a|}\frac{\widehat{v}\left(s\right)}{\widehat{\om}_{N+1}(s)^{p}}\,ds,
    \end{split}
    \end{equation*}
and the desired lower bound follows.

Finally, let $0<p\le 1$. Then, by Lemma~\ref{HL}, Fubini's theorem, Lemma~\ref{Lemma:replacement-Lemmas-Memoirs} and
\eqref{domi}, we get
    \begin{equation}
    \begin{split}\label{eqn1}\int_{\D}\left|\left(B^\om_a\right)^{(N)}(z)\right|^p v(z)\,dA(z)
    & \gtrsim \int_0^1 \left(\int_{0}^{s|a|} \frac{dt}{\widehat{\om}_{N+1}(t)^{p}}\right) v(s)\,ds
    \\ & =\int_0^{|a|}\frac{\widehat{v}\left(\frac{t}{|a|}\right)}{\widehat{\om}_{N+1}(t)^{p}}\,dt\\
    &\gtrsim\int_0^{2|a|-1}\frac{\widehat{v}(t)}{\widehat{\om}_{N+1}(t)^{p}}\left(\frac{1-\frac t{|a|}}{1-t}\right)^\b\,dt\\
    &\gtrsim\int_0^{2|a|-1}\frac{\widehat{v}(t)}{\widehat{\om}_{N+1}(t)^{p}}\,dt\\
    &\asymp\int_0^{|a|}\frac{\widehat{v}(t)}{\widehat{\om}_{N+1}(t)^{p}}\,dt,\quad |a|\to 1^-.
    \end{split}
    \end{equation}
 Theorem~1(ii) for $v\in\R$ continuous is now proved.

\smallskip

Before proving (ii) for $v\in\DD$, we will prove (i).
To do this we will use the well known 
inclusions
    \begin{equation}\label{2}
    \mathcal{D}^p_{p-1}\subsetneq H^p,\quad0<p<2,
    \end{equation}
and
    \begin{equation}\label{1}
    H\sp p\subsetneq\mathcal{D}^p_{p-1},\quad2<p<\infty,
    \end{equation}
where $\mathcal{D}^p_{p-1}$ denotes the space of  $f\in\H(\D)$ such that
    $
    \int_\D|f'(z)|^p(1-|z|)^{p-1}\,dA(z)<\infty.
    $

\medskip

{\bf{Part~(i). Case~$\mathbf{0<p\le 2}$}}. Let $r\in[\frac12,1)$. Then, by \eqref{2},
and Theorem~\ref{th:kernelstimate}(ii) for the regular weight $v(z)=(1-|z|)^{p-1}$,
    \begin{equation*}
    \begin{split}
    M_p^p\left(\left(B^\om_a\right)^{(N)},r\right)&=\left\|\left(B^\om_a\right)^{(N)}_r\right\|_{H^p}^p
    \lesssim1+\int_\D\left|\frac{\partial^{N+1}B^\om_a(z,ra)}{\partial^{N+1}z}\right|^p(1-|z|)^{p-1}\,dA(z)\\
    &\asymp 1+\int_0^{r|a|}\frac{ds}{\widehat{\om}_{N+1}(s)^{p}},\quad |a|\to 1^-.
    \end{split}
    \end{equation*}
The reverse implication follows by Lemma~\ref{HL}(i).
\medskip

{\bf{Part~(i). Case~$\mathbf{2<p<\infty}$}}. It can be proved similarly, by using \eqref{1}, Theorem~\ref{th:kernelstimate}(ii) for $v(z)=(1-|z|)^{p-1}$ and Lemma~\ref{HL}(ii).

\medskip

The proof of Theorem~\ref{th:kernelstimate}(i) is now complete.

\medskip

{\bf{Part~(ii). Case~$\mathbf{v\in\DD}$}}. If $v$ is a radial weight, then Theorem~\ref{th:kernelstimate}(i) and
Fubini's theorem yield
    \begin{equation}
    \begin{split}\label{anyradial}
    \int_\D\left|\left(B^\om_a\right)^{(N)}(z)\right|^pv(z)\,dA(z)
    & \asymp \int_0^1 \left(\int_{0}^{s|a|} \frac{dt}{\widehat{\om}_{N+1}(t)^{p}}\right) v(s)\,ds
    \\ & =\int_0^{|a|}\frac{\widehat{v}\left(\frac{t}{|a|}\right)}{\widehat{\om}_{N+1}(t)^{p}}\,dt
    \\ & \le \int_0^{|a|}\frac{\widehat{v}\left(t\right)}{\widehat{\om}_{N+1}(t)^{p}}\,dt,\quad |a|\ge\frac12.
     \end{split}
    \end{equation}
The reverse inequality for $v\in\DD$ can be proved by combining \eqref{anyradial} with the argument used in \eqref{eqn1}.
This finishes the proof of Theorem~\ref{th:kernelstimate}(ii).\hfill$\Box$

\medskip

\noindent\emph{Proof of Corollary~\ref{co:ernelstimaten}.} The equivalence between the asymptotic equality (i) and \eqref{Eq:hypothesis-kernelmeans} follows by Theorem~\ref{th:kernelstimate}(i) and \eqref{bi}. Moreover, \eqref{kn4} is equivalent to \eqref{Eq:hypothesis-kernel} by Theorem~\ref{th:kernelstimate}(ii) and \eqref{domi}.\hfill$\Box$

\section{Projections}\label{sec:projections}

\subsection{Two weight inequality}

Theorem~\ref{theorem:projections2} is contained in the following result.

\begin{theorem}\label{theorem:projections3}
Let $1<p<\infty$ and $\om,v\in\R$. Then the following conditions are equivalent:
\begin{itemize}
\item[\rm(a)] $P^+_\om:L^p_v\to L^p_v$ is bounded;
\item[\rm(b)] $P_\om:L^p_v\to L^p_v$ is bounded;
\item[\rm(c)]  $\displaystyle
\sup_{0<r<1} \frac{\widehat{v}(r)^{\frac{1}{p}}
    \left(\int_r^1\left(\frac{\om(s)}{v(s)}\right)^{p'}\,v(s)ds\right)^{\frac{1}{p'}}
    }{\widehat\om(r)}<\infty$;
\item[\rm(d)] $\displaystyle \sup_{0<r<1}\frac{\om(r)^p(1-r)^{p-1}}{v(r)}\int_0^r\frac{v(s)}{\om(s)^p(1-s)^p}\,ds<\infty$;
\item[\rm(e)] $\displaystyle \sup_{0<r<1}\left(\int_0^r\frac{v(s)}{\om(s)^p(1-s)^p}\,ds\right)^{\frac{1}{p}}
    \left(\int_r^1\left(\frac{\om(s)}{v(s)}\right)^{p'}\,v(s)ds\right)^{\frac{1}{p'}}<\infty;$
\item[\rm(f)] $\displaystyle
\sup_{0<r<1} \frac{\widehat{v}(r)^{\frac{1}{p}}
    \int_r^1\frac{\om(s)}{\left((1-s)v(s)\right)^{1/p}}\,ds
    }{\widehat\om(r)}<\infty$;
\item[\rm(g)]$\displaystyle \sup_{0<r<1}\frac{\om(r)(1-r)^{\frac{1}{p'}}}{v(r)^{1/p}}\int_0^r\frac{v(s)^\frac{1}{p}}{\om(s)(1-s)^{1+\frac{1}{p'}}}\,ds<\infty$.
\end{itemize}
\end{theorem}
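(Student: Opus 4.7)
The plan is to establish the cycle
    $$
    \mathrm{(a)}\Rightarrow\mathrm{(b)}\Rightarrow\mathrm{(c)}\Leftrightarrow\mathrm{(d)}\Leftrightarrow\mathrm{(e)}\Leftrightarrow\mathrm{(f)}\Leftrightarrow\mathrm{(g)}\Rightarrow\mathrm{(a)}.
    $$
The implication $\mathrm{(a)}\Rightarrow\mathrm{(b)}$ is immediate from the pointwise bound $|P_\om f(z)|\le P^+_\om(|f|)(z)$. For $\mathrm{(b)}\Rightarrow\mathrm{(c)}$ I would compute the formal adjoint of $P_\om$ with respect to the pairing $\int_\D f\,\bar g\,v\,dA$, obtaining $P_\om^{\star}(g)=(\om/v)P_\om(gv/\om)$, and then test the resulting bound $\|P_\om^{\star}(g)\|_{L^{p'}_v}\lesssim\|g\|_{L^{p'}_v}$ on the monomials $g_n(\zeta)=\bar\zeta^n$. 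Using radiality of $\om,v$ together with the kernel expansion $B^\om_z(\zeta)=\sum_{k\ge 0}(z\bar\zeta)^k/(2\om_k)$, a short computation gives $P_\om^{\star}(\bar\zeta^n)(z)=(v_n/\om_n)(\om(z)/v(z))\,\bar z^n$. Taking $L^{p'}_v$-norms on both sides and rewriting the moments of $v$, $\om$ and $(\om/v)^{p'}v$ via Lemma~\ref{Lemma:replacement-Lemmas-Memoirs}(iii) delivers (c) along the sequence $r=1-1/n$, and the $\DD$-doubling property extends this to every $r\in(0,1)$.

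The string of equivalences $\mathrm{(c)}\Leftrightarrow\mathrm{(d)}\Leftrightarrow\mathrm{(e)}\Leftrightarrow\mathrm{(f)}\Leftrightarrow\mathrm{(g)}$ is entirely a matter of manipulating integrals of weights. The regularity hypothesis $\om,v\in\R$ supplies the two-way comparisons $\om(r)\asymp\widehat\om(r)/(1-r)$ and $v(r)\asymp\widehat v(r)/(1-r)$, so every occurrence of $\om$ or $v$ inside a $\sup_r$ or inside an integral may be replaced with $\widehat\om/(1-s)$ or $\widehat v/(1-s)$. With this in hand I would use the identity $d\widehat\om(s)/ds=-\om(s)$ and integration by parts, together with the essential monotonicity of $\widehat\om(s)(1-s)^{-\beta}$ for $\beta=\beta(\om)$ large enough (Lemma~\ref{Lemma:replacement-Lemmas-Memoirs}(ii)), to transfer between the one-integral supremum conditions (c), (d), (f), (g) and the two-integral Muckenhoupt-type condition (e). Boundary contributions arising during these manipulations would be absorbed by the self-improving Lemma~\ref{Lemma:self-improving} alluded to in the introduction.

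Finally, $\mathrm{(e)}\Rightarrow\mathrm{(a)}$ is the main analytic step and I would prove it by Schur's test. Viewing $P^+_\om$ as an integral operator on $L^p(v\,dA)$ with non-negative kernel $K(z,\zeta)=|B^\om_\zeta(z)|\om(\zeta)/v(\zeta)$, I would look for a radial Schur weight of the form $h(z)=\widehat\om(z)^{\alpha}\widehat v(z)^{\beta}(1-|z|)^{\gamma}$ with the exponents calibrated via the self-improved form of (e). The two Schur inequalities then reduce to
    $$
    \|B^\om_z\|_{A^1_{h^{p'}\om}}\lesssim h(z)^{p'}\quad\text{and}\quad \|B^\om_\zeta\|_{A^1_{h^{p}v}}\lesssim h(\zeta)^{p}\,\frac{v(\zeta)}{\om(\zeta)},
    $$
and, provided the auxiliary weights $h^{p'}\om$ and $h^{p}v$ lie in $\DD$, Theorem~\ref{th:kernelstimate}(ii) applied with $p=1$ converts each left-hand side into a one-variable integral of the form $\int_0^{|z|}\widehat\mu(t)/[\widehat\om(t)(1-t)]\,dt$. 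The hypothesis (e), rewritten through the equivalences of the preceding paragraph in the self-improved form of \eqref{Eq:improving-intro}, is exactly what controls these integrals by the right-hand sides above. The main obstacle in the whole argument is this final step: one must pin down the Schur exponents $(\alpha,\beta,\gamma)$ so that both $h^{p'}\om$ and $h^{p}v$ are doubling \emph{and} the one-variable integrals produced by Theorem~\ref{th:kernelstimate}(ii) can be closed with strict margin under the Muckenhoupt-type hypothesis. The self-improving nature of (e) is precisely what provides this margin.
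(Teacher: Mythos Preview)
Your overall architecture matches the paper's: $\mathrm{(a)}\Rightarrow\mathrm{(b)}$ trivially, $\mathrm{(b)}\Rightarrow\mathrm{(c)}$ via the adjoint tested on monomials and Lemma~\ref{Lemma:replacement-Lemmas-Memoirs}(iii), the equivalences $\mathrm{(c)}\Leftrightarrow\cdots\Leftrightarrow\mathrm{(g)}$ by weight manipulations, and the return implication via a Schur test fed by the kernel estimates of Theorem~\ref{th:kernelstimate}. Two points deserve comment.

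First, for the chain $\mathrm{(c)}\Leftrightarrow\cdots\Leftrightarrow\mathrm{(g)}$ the paper does not invoke the self-improving Lemma~\ref{Lemma:self-improving} at all; it packages the equivalences into Lemma~\ref{Lemma:Muckenhoupt}, which says precisely that for $\om,v\in\R$ each of these conditions is equivalent to the single statement ``$\bigl(\om/v\bigr)^{p'}v\in\R$''. Your appeal to self-improvement here is misplaced (and mildly circular, since Lemma~\ref{Lemma:self-improving} already presupposes one of the conditions); the boundary terms from integration by parts are instead controlled directly by regularity and Lemma~\ref{Lemma:replacement-Lemmas-Memoirs}(ii).

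Second, and more substantively, the Schur step in the paper is executed differently. Rather than a power-type trial function $\widehat\om^{\alpha}\widehat v^{\beta}(1-|z|)^{\gamma}$ with exponents to be calibrated via self-improvement, the paper proves $\mathrm{(c)}\Rightarrow\mathrm{(a)}$ directly with the explicit choice
\[
h(r)=v(r)^{1/p}\left(\int_r^1\Bigl(\tfrac{\om(s)}{v(s)}\Bigr)^{p'}v(s)\,ds\right)^{1/(pp')}.
\]
This choice is tailored so that $\int_t^1(\om/h)^{p'}\,ds$ telescopes \emph{exactly} to $p'\bigl(\int_t^1(\om/v)^{p'}v\bigr)^{1/p'}$, whence hypothesis~(c) closes the first Schur inequality immediately; the second is then handled by Theorem~\ref{th:kernelstimate}(i) with Fubini and a Bernoulli--l'H\^opital argument, with no recourse to self-improvement. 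Your power-weight route is viable in principle, but you have not specified the exponents, and making both auxiliary weights lie in $\DD$ while leaving strict margin in the resulting one-variable integrals is exactly the delicate bookkeeping the paper's choice of $h$ sidesteps. In short: same strategy, but the paper's Schur weight is sharper and self-contained, whereas yours would need Lemma~\ref{Lemma:self-improving} and a concrete exponent selection to be complete.
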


\begin{proof}
The implication (a)$\Rightarrow$(b) is obvious, so assume (b). A direct calculation shows that
the adjoint of $P_\om$, with respect to $\langle \cdot,\cdot\rangle_{L^2_v}$, is given by
    \begin{equation}\label{eq:adjoint}
    P^\star_\om(g)(\z)=\frac{\om(\z)}{v(\z)}\int_{\D}g(z)B^\om(\z,z)v(z)\,dA(z),\quad g\in L^{p'}_v.
    \end{equation}
By the hypothesis, $P_\om^\star:L^{p'}_v\to L^{p'}_v$ is bounded, and hence, by choosing $g_n(z)=z^n$, $n\in\N$, and using \eqref{RKformula} with the standard basis of $A^2_\om$, we deduce
    \begin{equation*}
    \begin{split}
    \left(\frac{v_n}{\om_n}\right)^{p'}\int_\D|\z|^{np'}\left(\frac{\om(\z)}{v(\z)}\right)^{p'}v(\z)\,dA(\z)
    \asymp\|P_\om^\star(g_n)\|_{L^{p'}_v}^{p'}\lesssim\|g_n\|_{L^{p'}_v}^{p'}=2v_{\frac{np'}{2}}.
    \end{split}
    \end{equation*}
This together with Lemma~\ref{Lemma:replacement-Lemmas-Memoirs} gives
    \begin{equation*}
    \begin{split}
    \infty&>\sup_{n}\left(\frac{v_n^{p'}}{v_{\frac{np'}{2}}\om^{p'}_n}\int_0^1 s^{np'+1}\left(\frac{\om(s)}{v(s)}\right)^{p'}v(s)\,ds\right)\\
    &\asymp\sup_{n}\left(\frac{\widehat{v}\left(1-\frac{1}{n}\right)^{p'-1}}{\widehat{\om}\left(1-\frac{1}{n}\right)^{p'}}
    \int_0^1 s^{np'+1}\left(\frac{\om(s)}{v(s)}\right)^{p'}v(s)\,ds\right)\\
    &\gtrsim\sup_{n}\left(\frac{\widehat{v}\left(1-\frac{1}{n}\right)^{p'-1}}{\widehat{\om}\left(1-\frac{1}{n}\right)^{p'}}
    \int_{1-\frac1n}^1\left(\frac{\om(s)}{v(s)}\right)^{p'}v(s)\,ds\right).
    \end{split}
    \end{equation*}
By choosing $r\in[0,1)$ such that $r\in [1-\frac{1}{n},1-\frac{1}{n+1})$, we deduce
    \begin{equation*}
    \begin{split}
    \sup_{r\in[0,1)}\frac{\widehat{v}(r)^{p'-1}}{\widehat{\om}(r)^{p'}}\int_{r}^1\left(\frac{\om(s)}{v(s)}\right)^{p'}v(s)\,ds<\infty,
    \end{split}
    \end{equation*}
which is equivalent to (c).

(c)$\Rightarrow$(a). Let $h(r)=v^{1/p}(r)\left(\int_r^1\left(\frac{\om(s)}{v(s)}\right)^{p'}\,v(s)ds\right)^{\frac{1}{pp'}}$.
By the hypothesis (c),
    \begin{equation}
    \begin{split}\label{eq:mu1}
    \int_{t}^1 \left(\frac{\om(s)}{h(s)}\right)^{p'}\,ds
    =p'\left(\int_r^1\left(\frac{\om(s)}{v(s)}\right)^{p'}\,v(s)ds\right)^{\frac{1}{p'}}
    \lesssim \frac{\widehat{\om}(r)}{\widehat{v}(r)^{1/p}}.
    \end{split}
    \end{equation}
H\"older's inequality yields
    \begin{equation}
    \begin{split}\label{eq:mu2}
    \|P^+_\om(f)\|^p_{L^p_v}
    &\le\int_{\D}\left(\int_{\D}|f(\z)|^ph(\z)^p|B^\om(z,\z)|\,dA(\z)\right)\\
    &\quad\cdot\left(\int_{\D}|B^\om(z,\z)|\left(\frac{\om(\z)}{h(\z)}\right)^{p'}\,dA(\z)\right)^{p/p'}v(z)\,dA(z),
    \end{split}
    \end{equation}
where, by Theorem~\ref{th:kernelstimate}(i) and \eqref{eq:mu1},
    \begin{equation}
    \begin{split}\label{eq:mu3}
    \int_{\D}|B^\om(z,\z)|\left(\frac{\om(\z)}{h(\z)}\right)^{p'}\,dA(\z)
    &\lesssim \int_0^{1}\left(\frac{\om(s)}{h(s)}\right)^{p'}\left(\int_0^{s|z|}\frac{dt}{\widehat{\om}(t)(1-t)}\right)\,ds
    \\ & =  \int_0^{|z|}\left(\int_{t/|z|}^1\left(\frac{\om(s)}{h(s)}\right)^{p'}\,ds\right)\frac{dt}{\widehat{\om}(t)(1-t)}
    \\ & \le\int_0^{|z|}\left(\int_{t}^1\left(\frac{\om(s)}{h(s)}\right)^{p'}\,ds\right)\frac{dt}{\widehat{\om}(t)(1-t)}
    \\ & \lesssim
    \int_0^{|z|}\frac{dt}{\widehat{v}(t)^{1/p}(1-t)}.
    \end{split}
    \end{equation}
Since $v\in\R$, Lemma~\ref{Lemma:Muckenhoupt} below, with $\alpha=1+\frac{1}{p}$, gives
    \begin{equation*}\begin{split}
    \int_0^{|z|}\frac{dt}{(1-t)\widehat{v}(t)^{1/p}}
   \asymp \frac{1}{\widehat{v}(z)^{1/p}},
  \end{split}\end{equation*}
which combined with \eqref{eq:mu3} yields
    \begin{equation*}
    \begin{split}
    &\int_{\D}|B^\om(z,\z)|\left(\frac{\om(\z)}{h(\z)}\right)^{p'}\,dA(\z)\lesssim \frac{1}{\widehat{v}(z)^{1/p}}.
    \end{split}
    \end{equation*}
This together with \eqref{eq:mu2} and Fubini's theorem give
    \begin{equation}
    \begin{split}\label{eq:mu4}
    \|P^+_\om(f)\|^p_{L^p_v}
    &\lesssim
    \int_{\D}\left(\int_{\D}|f(\z)|^ph(\z)^p|B^\om(z,\z)|\,dA(\z)\right)\frac{v(z)}{\widehat{v}(z)^{1/p'}}\,dA(z)\\
    &=\int_{\D}|f(\z)|^ph(\z)^{p}\left(\int_{\D}|B^\om(z,\z)|\frac{v(z)}{\widehat{v}(z)^{1/p'}}\,dA(z)\right)dA(\z).
    \end{split}
    \end{equation}
Another application of Theorem~\ref{th:kernelstimate}(i) and an integration by parts give
    \begin{equation*}
    \begin{split}
    \int_{\D}|B^\om(z,\z)|\frac{v(z)}{\widehat{v}(z)^{1/p'}}\,dA(z)
    &\lesssim \int_0^{1}\frac{v(s)}{\widehat{v}^{1/p'}(s)}\left(\int_0^{s|\z|}\frac{dt}{\widehat{\om}(t)(1-t)}\right)\,ds
    \\ & =  \int_0^{|\z|}\left(\int_{t/|\z|}^1\frac{v(s)}{\widehat{v}^{1/p'}(s)}\,ds\right)\frac{dt}{\widehat{\om}(t)(1-t)}
    \\ & \le \int_0^{|\z|}\widehat{\left(\frac{v}{\widehat{v}^{1/p'}}\right)}(t)\frac{dt}{\widehat{\om}(t)(1-t)}
    \asymp \int_0^{|\z|}\frac{\widehat{v}^{1/p}(t)}{\widehat{\om}(t)}\,\frac{dt}{(1-t)},
    \end{split}
    \end{equation*}
and so
    \begin{equation*}
    \begin{split}
    &h(\z)^{p}\left(\int_{\D}|B^\om(z,\z)|\frac{v(z)}{\widehat{v}(z)^{1/p'}}\,dA(z)\right)\\
    &\lesssim
    v(\z)\left(\int_{|\z|}^1\left(\frac{\om(s)}{v(s)}\right)^{p'}\,v(s)ds\right)^{\frac{1}{p'}}
    \int_0^{|\z|}\frac{\widehat{v}^{1/p}(t)}{\widehat{\om}(t)}\,\frac{dt}{(1-t)}.
    \end{split}
    \end{equation*}
By Bernoulli-l'H\^{o}pital theorem and the hypotheses (c) and $\om,v\in\R$, we deduce
    \begin{equation*}
    \begin{split}
    &\limsup_{r\to 1^-}
    \frac{\int_0^{r}\frac{\widehat{v}^{1/p}(t)}{\widehat{\om}(t)}\,\frac{dt}{(1-t)}}
    {\left(\int_{r}^1\left(\frac{\om(s)}{v(s)}\right)^{p'}\,v(s)ds\right)^{-\frac{1}{p'}}}
    \\ & \lesssim
    \limsup_{r\to 1^-} \left(\int_{r}^1\left(\frac{\om(s)}{v(s)}\right)^{p'}\,v(s)ds\right)^{1+\frac{1}{p'}}\left(\frac{\widehat{v}(r)^{1/p}(1-r)^\frac1{p'}}{\widehat{\om}(r)}\right)^{p'}
    \frac{\widehat{v}^{1/p}(r)}{\widehat{\om}(r)}\frac{1}{1-r}
    \\ & \lesssim
    \limsup_{r\to 1^-}
    \left(\frac{\widehat{\om}(r)}{\widehat{v}^{1/p}(r)}\right)^{p'+1}
    \left(\frac{\widehat{v}(r)^{1/p}}{\widehat{\om}(r)}\right)^{p'}
    \frac{\widehat{v}^{1/p}(r)}{\widehat{\om}(r)}
    =1,
    \end{split}
    \end{equation*}
and consequently,
    $$
    h(\z)^{p}\left(\int_{\D}|B^\om(z,\z)|\frac{v(z)}{\widehat{v}(z)^{1/p'}}\,dA(z)\right)
    \lesssim v(\z).
    $$
This and \eqref{eq:mu4} give $\|P^+_\om(f)\|_{L^p_v}\lesssim\|f\|_{L^p_v}$, and thus we have shown that (a), (b) and (c) are equivalent.

The condition (c) is equivalent to saying that $\left(\frac{\om}{v^{1/p}}\right)^{p'}\in\R$ because $\om,v\in\R$ by the hypothesis. An application of Lemma~\ref{Lemma:Muckenhoupt} below, with $\alpha=p$, now implies that (c), (d), (e) and (f) are equivalent. Further, (f) together with the hypotheses $\om,v\in\R$ shows that $\frac{\om(r)}{(1-r)^{1/p}v(r)^{1/p}}$ is a regular weight, and another application of Lemma~\ref{Lemma:Muckenhoupt}, with $\alpha=2$, gives (f)$\Leftrightarrow$(g).
\end{proof}

For $\om:[0,1)\to(0,\infty)$, define
    $$
    \widetilde{\psi}_\om(r)=\frac{1}{\om(r)}\int_0^r\om(s)\,ds,\quad r\in[0,1),
    $$
and recall that, for each weight $\om$,
    $$
    \psi_\om(r)=\frac1{\om(r)}\int_r^1\om(s)\,ds.
    $$
Several useful characterizations of regular weights are gathered to the
following lemma, the proof of which is standard and therefore omitted.

\begin{lemma}\label{Lemma:Muckenhoupt}
Let $\om$ be a radial weight and $1<\a<\infty$. Denote $\om_1(r)=\om(r)^{1-\a}(1-r)^{-\a}$ and $\om_2(r)=(\om(r)(1-r))^{-\frac1\a}\om(r)$. Then the following assertions are equivalent:
\begin{itemize}
\item[\rm(i)] $\om\in\R$;
\item[\rm(ii)] $\displaystyle
    \frac{\widetilde{\psi}_{\om_1}(r)}{1-r}\asymp1,\quad r\to1^-; 
    $
\item[\rm(iii)] $\om$ satisfies \eqref{eq:r2} and
    \begin{equation}\label{Eq:Muckenhoupt}
    \sup_{0<r<1}\left(\frac{\widetilde{\psi}_{\om_1}(r)}{1-r}\right)\left(\frac{\psi_\om(r)}{1-r}\right)^{\a-1}<\infty;
    \end{equation}
\item[\rm(iv)] $\om_2\in\R$.
\end{itemize}
\end{lemma}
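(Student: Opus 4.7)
The plan is to prove the chain (i)$\Rightarrow$(iii)$\Rightarrow$(ii)$\Rightarrow$(i) and the equivalence (i)$\Leftrightarrow$(iv) separately, using the characterization $\om\in\R$ iff \eqref{eq:r2} and $\widehat\om(r)\asymp\om(r)(1-r)$ (the latter forcing $\om\in\DD$ so that Lemma~\ref{Lemma:replacement-Lemmas-Memoirs} becomes available). A direct computation yields the identity
\begin{equation*}
\frac{\widetilde\psi_{\om_1}(r)}{1-r}\left(\frac{\psi_\om(r)}{1-r}\right)^{\a-1}
=\widehat\om(r)^{\a-1}\int_0^r\frac{ds}{\om(s)^{\a-1}(1-s)^\a},
\end{equation*}
which reduces the first three conditions to integral estimates involving $\widehat\om$.

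For (i)$\Rightarrow$(iii), I would substitute $\om(s)(1-s)\asymp\widehat\om(s)$ in the integrand and invoke Lemma~\ref{Lemma:replacement-Lemmas-Memoirs}(ii) with $\b$ large enough that $\b(\a-1)>1$; the Carleson-type interval $(r-\delta(1-r),r)$ produces a bulk $\asymp\widehat\om(r)^{-(\a-1)}$ while the rest is tamed by the power bound $\widehat\om(s)\gtrsim\widehat\om(r)((1-s)/(1-r))^\b$. Hence the product is $\asymp 1$, and \eqref{eq:r2} comes for free with (i). For (iii)$\Rightarrow$(ii), \eqref{eq:r2} guarantees $\widetilde\psi_{\om_1}(r)/(1-r)\gtrsim 1$ upon integrating $\om_1(s)\asymp\om_1(r)$ over $(r-\delta(1-r),r)$, and the same principle applied to $\om$ gives $\psi_\om(r)/(1-r)\gtrsim 1$; together with the sup condition, this forces $\widetilde\psi_{\om_1}(r)/(1-r)\lesssim 1$.

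The critical step is (ii)$\Rightarrow$(i). Setting $G(r)=\int_0^r\om_1(s)\,ds$, the hypothesis reads $G(r)\asymp(1-r)G'(r)$, i.e.\ $G'(r)/G(r)\asymp 1/(1-r)$. Integrating this two-sided differential estimate yields constants $0<c_1\le c_2$ with
\begin{equation*}
\Bigl(\tfrac{1-r_0}{1-r}\Bigr)^{c_1}\lesssim\frac{G(r)}{G(r_0)}\lesssim\Bigl(\tfrac{1-r_0}{1-r}\Bigr)^{c_2},\quad r_0\le r<1.
\end{equation*}
Evaluating at $t=r+s(1-r)$ shows $G(t)/G(r)\asymp 1$ with constants depending only on $s$, whence $\om_1(t)/\om_1(r)\asymp(G(t)/G(r))(1-r)/(1-t)\asymp 1$ (again up to constants depending on $s$); via $\om_1=\om^{1-\a}(1-r)^{-\a}$ this translates into \eqref{eq:r2} for $\om$. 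For the remaining piece $\widehat\om(r)\lesssim\om(r)(1-r)$, I would use $\om(r)(1-r)\asymp G(r)^{1/(1-\a)}$ (read off directly from (ii)) together with the lower bound $G(s)\gtrsim G(r)((1-r)/(1-s))^{c_1}$ for $s\ge r$ to obtain
\begin{equation*}
\widehat\om(r)\asymp\int_r^1\frac{G(s)^{1/(1-\a)}}{1-s}\,ds
\lesssim G(r)^{1/(1-\a)}\asymp\om(r)(1-r),
\end{equation*}
the integral converging precisely because $c_1/(\a-1)>0$.

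Finally, (i)$\Leftrightarrow$(iv) follows by the change of variable $t=\widehat\om(s)$ and its reverse. Under (i), the formula $\om(s)\asymp\widehat\om(s)/(1-s)$ produces
\begin{equation*}
\widehat{\om_2}(r)\asymp\int_r^1\frac{\widehat\om(s)^{(\a-1)/\a}}{1-s}\,ds
\asymp\int_0^{\widehat\om(r)}t^{-1/\a}\,dt
\asymp\widehat\om(r)^{(\a-1)/\a}\asymp\om_2(r)(1-r),
\end{equation*}
which, combined with \eqref{eq:r2} for $\om_2$ (equivalent to \eqref{eq:r2} for $\om$ by the multiplicative form $\om_2=\om^{(\a-1)/\a}(1-r)^{-1/\a}$), delivers (iv). The converse is the same substitution with $\om_2$ in place of $\om$. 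The main obstacle throughout is the ODE-type step in (ii)$\Rightarrow$(i), where the integral/averaged hypothesis on $\om_1$ must be refined, via the two-sided bound on $G'/G$, into the pointwise regularity \eqref{eq:r2} for $\om$.
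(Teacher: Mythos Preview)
The paper omits this proof entirely (it states ``the proof of which is standard and therefore omitted''), so there is nothing to compare your argument against. Your scheme (i)$\Rightarrow$(iii)$\Rightarrow$(ii)$\Rightarrow$(i) together with (i)$\Leftrightarrow$(iv) is sound, and the key identity you display for the product $\bigl(\widetilde\psi_{\om_1}(r)/(1-r)\bigr)\bigl(\psi_\om(r)/(1-r)\bigr)^{\a-1}$ is correct and does the bookkeeping cleanly. The ODE step in (ii)$\Rightarrow$(i), integrating the two-sided bound on $G'/G$ to obtain polynomial control of $G$ and thence \eqref{eq:r2} for $\om$, is exactly the right mechanism, and the substitution $t=\widehat\om(s)$ (resp.\ $t=\widehat{\om_2}(s)$) for (i)$\Leftrightarrow$(iv) works as you say.

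There is one genuine slip in (i)$\Rightarrow$(iii). You invoke Lemma~\ref{Lemma:replacement-Lemmas-Memoirs}(ii) and write ``$\widehat\om(s)\gtrsim\widehat\om(r)((1-s)/(1-r))^\b$ with $\b$ large enough that $\b(\a-1)>1$.'' Lemma~\ref{Lemma:replacement-Lemmas-Memoirs}(ii) gives the \emph{opposite} inequality $\widehat\om(s)\lesssim\widehat\om(r)((1-s)/(1-r))^\b$ for $s\le r$, and your stated lower bound with large $\b$ is in fact false (take $s$ near $0$). What you need is the reverse-doubling estimate, available because $\om\in\R$: from $\om(r)\gtrsim\widehat\om(r)/(1-r)$ one integrates $(\log\widehat\om)'\le -c/(1-r)$ to get a \emph{small} exponent $\gamma=\gamma(\om)>0$ with $\widehat\om(s)\gtrsim\widehat\om(r)((1-s)/(1-r))^\gamma$ for $s\le r$. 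With this, the tail of $\int_0^r\widehat\om(s)^{1-\a}(1-s)^{-1}\,ds$ is controlled for every $\gamma>0$ (no threshold on $\gamma(\a-1)$ is needed beyond positivity), and the argument goes through. With that correction your proof is complete.
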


We now deal with the case $p=1$.

\medskip

\noindent\emph{Proof of Theorem~\ref{theorem:projections5}.} By \eqref{eq:adjoint}, $P_\om:L^1_v\to L^1_v$ is bounded if and only if
\begin{equation}\label{eq:case11}
\sup_{\z\in\D}\left|\frac{\om(\z)}{v(\z)}\int_{\D}g(z)B^\om(\z,z)v(z)\,dA(z)\right|\lesssim\|g\|_{L^\infty},\quad g\in L^\infty.
\end{equation}
For each $a\in \D$, define
    \begin{equation*}
    g_a(z)=\begin{cases}
    &\frac{|B^\om(a,z)|}{B^\om(a,z)},\quad \text{if $B^\om(a,z)\neq 0$}\\
    & 0, \quad\quad \text{if $B^\om(a,z)=0$}.
    \end{cases}
    \end{equation*}
By using this family as test functions, \eqref{eq:case11} shows that (a) is equivalent to
    \begin{equation*}\label{eq:case12}
    \sup_{\z\in\D}\frac{\om(\z)}{v(\z)}\int_{\D}\left|B^\om(\z,z)\right|v(z)\,dA(z)<\infty,
    \end{equation*}
which is in turn equivalent to (c) by Theorem~\ref{th:kernelstimate}.

A similar argument shows that the condition (c) characterizes the bounded linear operators
    $$
    \widetilde{P^+_\om}(f)(z)=\int_{\D}f(\z)|B^\om(z,\z)|\om(\z)\,dA(\z)
    $$
on $L^1_v$, and since clearly $\widetilde{P^+_\om}$ is bounded on $L^1_v$ if and only if (b) is satisfied, we have shown that (a), (b) and (c) are equivalent.

To complete the proof it suffices to notice that (d) says that $\frac{\om(r)}{(1-r)v(r)}$ is regular, which is equivalent to (c) by Lemma~\ref{Lemma:Muckenhoupt} with $\a=2$.
\hfill$\Box$

\subsection{Self-improving conditions}

The following lemma together with Theorem~\ref{theorem:projections3} shows that the equivalent conditions appearing in Theorem~\ref{theorem:projections3} are self-improving in the sense that if one of them is satisfied for some $p>1$, then all of them are satisfied for $p-\d$ in place of $p$ if $\d>0$ is sufficiently small.

\begin{lemma}\label{Lemma:self-improving}
Let $0<p<\infty$ and $\om,v\in\R$ such that
    \begin{equation}\label{eq:(d)}
   \sup_{0<r<1} \frac{\widehat{\om}(r)^p}{\widehat{v}(r)}\int_0^r\frac{\widehat{v}(s)}{\widehat{\om}(s)^p(1-s)}\,ds<\infty.
    \end{equation}
Define
   \begin{equation*}
    m=\sup\left\{\d\ge0: \sup_{0<r<1}\frac{\widehat{\om}(r)^{p-\d}}{\widehat{v}(r)}\int_0^r\frac{\widehat{v}(s)}{\widehat{\om}(s)^{p-\d}(1-s)}\,ds<\infty
    \right\}
    \end{equation*}
and
    $$
    M=\inf\left\{\d\in\mathbb{R}\,:\int_0^1\frac{\widehat{v}(s)}{\widehat{\om}(s)^{p-\d}(1-s)}\,ds<\infty 
    \right\}.
    $$
Then $0<m\le M<p$. Moreover, if $\kappa_\om=\lim_{r\to1^-}\frac{\psi_\om(r)}{1-r}$ and $\kappa_v=\lim_{r\to1^-}\frac{\psi_v(r)}{1-r}$ exist, then $m=M=p-\frac{\kappa_\om}{\kappa_v}$. In particular, $\frac{\kappa_\om}{\kappa_v}<p$.
\end{lemma}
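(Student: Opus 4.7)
The plan is to establish the four claims in order: (1) $m>0$; (2) $m\le M$; (3) $M<p$; and (4), under the $\kappa$-hypothesis, $m=M=p-\kappa_\om/\kappa_v$. Throughout I abbreviate $H_\d(r)=\widehat v(r)/\widehat\om(r)^{p-\d}$ and $\Phi_\d(r)=\int_0^r H_\d(s)(1-s)^{-1}\,ds$, so the hypothesis \eqref{eq:(d)} reads $\Phi_0(r)\le CH_0(r)$ on $(0,1)$.

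The heart of the argument, and the main obstacle, is the self-improvement step $m>0$, which I approach by double integration by parts. Writing $H_\d=\widehat\om^\d H_0$ and $\Phi_0'(s)=H_0(s)/(1-s)$, integration by parts in $\Phi_\d(r)=\int_0^r\widehat\om(s)^\d\Phi_0'(s)\,ds$ gives
\[
\Phi_\d(r)=\widehat\om(r)^\d\Phi_0(r)+\d\int_0^r\widehat\om(s)^{\d-1}\om(s)\Phi_0(s)\,ds.
\]
Applying $\Phi_0\le CH_0$ in both terms and using $\widehat\om^{\d-p-1}\om=(p-\d)^{-1}(\widehat\om^{\d-p})'$ (from $\om=-\widehat\om'$), I reduce to
\[
\Phi_\d(r)\le CH_\d(r)+\frac{C\d}{p-\d}\int_0^r\widehat v(s)(\widehat\om^{\d-p})'(s)\,ds.
\]
A second integration by parts on the last integral, combined with $v(s)\le K\widehat v(s)/(1-s)$ (valid since $v\in\R$) and discarding the negative boundary term at $s=0$, gives $\int_0^r\widehat v(\widehat\om^{\d-p})'\,ds\le H_\d(r)+K\Phi_\d(r)$. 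Assembling the pieces yields the absorbable inequality
\[
\Phi_\d(r)\le \frac{Cp}{p-\d}H_\d(r)+\frac{CK\d}{p-\d}\Phi_\d(r),
\]
which rearranges to $\Phi_\d(r)\le C'H_\d(r)$ whenever $\d<p/(1+CK)$. Hence \eqref{eq:(d)} holds with $p$ replaced by $p-\d$ for every sufficiently small $\d>0$, and so $m>0$.

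For $m\le M$ I argue contrapositively. Let $\d\in[0,m)$, so $\Phi_\d\le CH_\d$ on $(0,1)$. If $\int_0^1 H_\d(s)(1-s)^{-1}\,ds<\infty$, then $L:=\lim_{r\to1^-}\Phi_\d(r)$ would be a finite positive number, forcing $H_\d(r)\ge L/C$ for $r$ near $1$; this lower bound together with the non-integrability of $(1-s)^{-1}$ yields $\int_0^1 H_\d/(1-s)\,ds=\infty$, a contradiction. Hence the integral in the definition of $M$ diverges for every $\d\in[0,m)$, i.e., $m\le M$. For $M<p$ I use Lemma~\ref{Lemma:replacement-Lemmas-Memoirs}(ii) with $t=0$ to obtain $\widehat v(s)\le C(1-s)^{\b_0^v}$ for some $\b_0^v>0$, and a standard iteration of the doubling of $\widehat\om$ to produce $\widehat\om(s)\ge c(1-s)^{\g_\om}$ for some $\g_\om>0$. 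For any $\eta\in(0,\b_0^v/\g_\om)$,
\[
\int_0^1\frac{\widehat v(s)}{\widehat\om(s)^\eta(1-s)}\,ds\lesssim \int_0^1(1-s)^{\b_0^v-\g_\om\eta-1}\,ds<\infty,
\]
so $\d=p-\eta<p$ belongs to the defining set of $M$; thus $M\le p-\b_0^v/\g_\om<p$.

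Finally, assume the $\kappa$-limits exist. Starting from $(\log\widehat\om)'(r)=-1/\psi_\om(r)$ and its $v$-counterpart, and integrating on $[r_0,r]$ using $\psi_\om(s)\sim\kappa_\om(1-s)$, $\psi_v(s)\sim\kappa_v(1-s)$, I obtain
\[
\widehat\om(r)\asymp (1-r)^{1/\kappa_\om+o(1)},\qquad \widehat v(r)\asymp (1-r)^{1/\kappa_v+o(1)}\qquad\text{as }r\to1^-.
\]
Set $G_q:=\widehat v/\widehat\om^q$ and $\mu:=q/\kappa_\om-1/\kappa_v$; then $(\log G_q)'(r)=q/\psi_\om(r)-1/\psi_v(r)\to \mu/(1-r)$. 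If $q<\kappa_\om/\kappa_v$ (so $\mu<0$) the integrand $G_q(s)/(1-s)$ behaves like $(1-s)^{|\mu|-1+o(1)}$ near $1$ and is integrable, giving $p-q\ge M$. If $q>\kappa_\om/\kappa_v$ (so $\mu>0$) both $\Phi_{p-q}$ and $G_q$ tend to $+\infty$, and Bernoulli--L'H\^opital with $\Phi_{p-q}'(r)=G_q(r)/(1-r)$ and $G_q'(r)\sim\mu G_q(r)/(1-r)$ gives $\lim_{r\to 1^-}\Phi_{p-q}(r)/G_q(r)=1/\mu$, so condition \eqref{eq:(d)} at exponent $q$ holds and $p-q\le m$. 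Letting $q\to(\kappa_\om/\kappa_v)^\pm$ produces $M\le p-\kappa_\om/\kappa_v\le m$, which together with $m\le M$ from the previous paragraph forces $m=M=p-\kappa_\om/\kappa_v$; the inequality $\kappa_\om/\kappa_v<p$ then follows from $m>0$.
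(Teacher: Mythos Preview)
Your argument is correct and follows essentially the same route as the paper: integration by parts with absorption for $m>0$, a contradiction argument for $m\le M$, polynomial bounds on $\widehat\om,\widehat v$ for $M<p$, and an asymptotic analysis under the $\kappa$-hypothesis.

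One correction is needed in your justification of $M<p$. Lemma~\ref{Lemma:replacement-Lemmas-Memoirs}(ii) applied with $r=0$ (not $t=0$) yields only the \emph{lower} bound $\widehat v(s)\gtrsim(1-s)^{\beta}$; it does not give the upper bound $\widehat v(s)\lesssim(1-s)^{\alpha}$ you invoke, and indeed no such bound holds for a general doubling weight. The upper bound is a consequence of the stronger hypothesis $v\in\R$: from $-(\log\widehat v)'(s)=1/\psi_v(s)\ge c/(1-s)$ one integrates to obtain $\widehat v(s)\lesssim(1-s)^{c}$. With this fix your estimate goes through; the paper reaches the same conclusion by quoting that $v/\widehat\om^\alpha$ is a regular weight for small $\alpha>0$.

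In the $\kappa$-case you apply Bernoulli--L'H\^opital directly to the ratio $\Phi_{p-q}/G_q$, which is slightly slicker than the paper's approach of tracking the essential monotonicity of $\widehat v(r)(1-r)^{-1/\kappa_v-\e}$ and $\widehat\om(r)^{-1}(1-r)^{1/\kappa_\om-\e}$; both arguments are equivalent in substance.
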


\begin{proof}
Without loss of generality we may assume that $\om$ and $v$ are continuous. If the integral condition in the definition of $m$ is satisfied for some $\d_0>0$, then it is satisfied for all $\d\le\d_0$; and similarly, if it fails for $\d_0>0$, then it fails for all $\d\ge\d_0$. Further, the condition obviously fails for $\d=p$, and also for $\d=\d(v,\om)<p$ sufficiently large because then $\widehat{v}(r)\widehat{\om}(r)^{\d-p}\to0$, $r\to1^-$, by \cite[p.~10]{PelRat} since $\om,v\in\R$ by the hypothesis. This implies $m<p$.
Furthermore, an integration by parts and the hypothesis \eqref{eq:(d)} show that
    $$
    \int_0^r\frac{\widehat{v}(s)}{\widehat{\om}(s)^{p-\d}(1-s)}\,ds\lesssim\frac{\widehat{v}(r)}{\widehat{\om}(r)^{p-\d}}
    +\d\int_0^r\frac{\widehat{v}(s)}{\widehat{\om}(s)^{p-\d}(1-s)}\,ds,\quad \d>0,
    $$
and hence
    \begin{equation*}
    \frac{\widehat{\om}(r)^{p-\d}}{\widehat{v}(r)}\int_0^r\frac{\widehat{v}(s)}{\widehat{\om}(s)^{p-\d}(1-s)}\,ds\lesssim1,\quad r\to1^-,
    \end{equation*}
for all $\d>0$ sufficiently small. Thus $m\in(0,p)$.
The integral in the definition of $M$ is bounded for $\d=p$ because $v\in\R$. If $\int_0^1\frac{\widehat{v}(s)\,ds}{\widehat{\om}(s)^{p-\d_0}(1-s)}$ converges, so does the same integral with $\d\ge\d_0$ in place of~$\d_0$.
Moreover, since $\om,v\in\R$ by the hypothesis, by \cite[p. $10$ (ii)]{PelRat},  $v/\widehat{\om}^\a$ is a (regular) weight for $\a>0$ sufficiently small, and thus $M<p$. To see that $m\le M$, assume on the contrary that there exists $\d>0$ such that
    $$
    \frac{\widehat{\om}(r)^{p-\d}}{\widehat{v}(r)}\int_0^r\frac{\widehat{v}(s)}{\widehat{\om}(s)^{p-\d}(1-s)}\,ds\lesssim1,\quad r\to1^-,\quad \textrm{and}\quad \int_0^1\frac{\widehat{v}(s)}{\widehat{\om}(s)^{p-\d}(1-s)}\,ds<\infty.
    $$
Then we deduce $\widehat{\om}(r)^{p-\d}/\widehat{v}(r)\lesssim1$, as $r\to1^-$, implying that $v/\widehat{\om}^{p-\d}$ is not a weight. This is obviously a contradiction, and thus $0<m\le M<p$.

Assume now that $\kappa_\om=\lim_{r\to1^-}\frac{\psi_\om(r)}{1-r}$ and $\kappa_v=\lim_{r\to1^-}\frac{\psi_v(r)}{1-r}$ exist. Then a direct calculation shows that for a given $\e>0$,
    \begin{equation*}
    \begin{split}
    (1-r)^{\frac{1}{\kappa_\om}+\e}\lesssim\widehat{\om}(r)\lesssim(1-r)^{\frac{1}{\kappa_\om}-\e}\quad \textrm{and} \quad
    (1-r)^{\frac{1}{\kappa_v}+\e}\lesssim\widehat{v}(r)\lesssim(1-r)^{\frac{1}{\kappa_v}-\e}
    \end{split}
    \end{equation*}
for all $r\in[0,1)$, and further, $\widehat{v}(r)(1-r)^{-\frac1{\kappa_v}-\e}$ and $\widehat{\om}(r)^{-1}(1-r)^{\frac1{\kappa_\om}-\e}$ are essentially increasing on $[0,1)$, see \cite[(ii) p.~10]{PelRat} for details. To prove $m=M$, let $K<p-\frac{\kappa_\om}{\kappa_v}$ be fixed. Then, for $\e>0$ sufficiently small,
    \begin{equation}
    \begin{split}\label{pkwkv}
    \int_0^r\frac{\widehat{v}(s)}{\widehat{\om}(s)^{p-K}(1-s)}\,ds
    &\asymp
    \int_0^r\left(\frac{\widehat{v}(s)}{(1-s)^{\frac{1}{\kappa_v}+\e}}\right)
    \left(\frac{(1-s)^{\frac1{\kappa_\om}-\e}}{\widehat{\om}(s)}\right)^{p-K}\frac{ds}{(1-s)^{\frac{p-K}{\kappa_\om}-\frac1{\kappa_v}+1-\e(p-K+1)}}\\
    &\lesssim\frac{\widehat{v}(r)}{\widehat{\om}(r)^{p-K}}(1-r)^{\frac{p-K}{\kappa_\om}-\frac1{\kappa_v}-\e(p-K+1)}
    \int_0^r\frac{ds}{(1-s)^{\frac{p-K}{\kappa_\om}-\frac1{\kappa_v}+1-\e(p-K+1)}}\\
    &\lesssim \frac{\widehat{v}(r)}{\widehat{\om}(r)^{p-K}},\quad r\to1^-,
    \end{split}
    \end{equation}
and hence, $m\ge p-\frac{\kappa_\om}{\kappa_v}$. Similarly, for a fixed $K\in(p-\frac{\kappa_\om}{\kappa_v},p)$ and $\e>0$ sufficiently small,
    \begin{equation}
    \begin{split}\label{pkwkv2}
    \int_0^1\frac{\widehat{v}(s)}{\widehat{\om}(s)^{p-K}(1-s)}\,ds
    &\asymp
    \int_0^1\left(\frac{\widehat{v}(s)}{(1-s)^{\frac{1}{\kappa_v}-\e}}\right)
    \left(\frac{(1-s)^{\frac1{\kappa_\om}+\e}}{\widehat{\om}(s)}\right)^{p-K}\frac{ds}{(1-s)^{\frac{p-K}{\kappa_\om}-\frac1{\kappa_v}+1+\e(p-K+1)}}\\
    &\lesssim
    \int_0^1\frac{ds}{(1-s)^{\frac{p-K}{\kappa_\om}-\frac1{\kappa_v}+1+\e(p-K+1)}}
    \lesssim1,
    \end{split}
    \end{equation}
and consequently, $M\le p-\frac{\kappa_\om}{\kappa_v}\le m$. It follows that $m=M=p-\frac{\kappa_\om}{\kappa_v}$ as claimed. Moreover, since $m>0$, we deduce $\frac{\kappa_\om}{\kappa_v}<p$.
\end{proof}

\begin{theorem}\label{theorem:projections3special}
Let $1\le p<\infty$ and $\om,v\in\R$ such that $\kappa_\om$ and $\kappa_v$ exist. Then the following conditions are equivalent:
\begin{itemize}
\item[\rm(a)] $P^+_\om:L^p_v\to L^p_v$ is bounded;
\item[\rm(b)] $P_\om:L^p_v\to L^p_v$ is bounded;
\item[\rm(c)] $\displaystyle \sup_{0<r<1}\frac{\om(r)^p(1-r)^{p-1}}{v(r)}\int_0^r\frac{v(s)}{\om(s)^p(1-s)^p}\,ds<\infty$;
\item[\rm(d)] $\displaystyle\frac{\kappa_\om}{\kappa_v}<p$.
\end{itemize}
\end{theorem}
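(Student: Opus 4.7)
The plan is to reduce the equivalence of (a), (b), and (c) to the already established Theorems~\ref{theorem:projections3} and~\ref{theorem:projections5}, and then to deduce the equivalence with (d) from Lemma~\ref{Lemma:self-improving}. The only real content added by this statement is the translation of a Muckenhoupt-type integral condition into the strikingly clean inequality $\kappa_\om/\kappa_v<p$, so the proof should be short.

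For $1<p<\infty$, the equivalence (a)$\Leftrightarrow$(b)$\Leftrightarrow$(c) is literally the equivalence (a)$\Leftrightarrow$(b)$\Leftrightarrow$(d) of Theorem~\ref{theorem:projections3}, as the conditions coincide. For $p=1$, using $\om,v\in\R$ one has $\om(s)(1-s)\asymp\widehat\om(s)$ and $v(s)(1-s)\asymp\widehat v(s)$, so
\[
\frac{\om(r)}{v(r)}\int_0^r\frac{v(s)}{\om(s)(1-s)}\,ds\asymp \frac{\om(r)}{v(r)}\int_0^r\frac{\widehat v(s)}{\widehat\om(s)(1-s)}\,ds,
\]
which is Theorem~\ref{theorem:projections5}(c). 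Hence (a)$\Leftrightarrow$(b)$\Leftrightarrow$(c) in the $p=1$ case follows from Theorem~\ref{theorem:projections5}.

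To connect (c) and (d), I first rewrite (c) in the form
\[
\sup_{0<r<1}\frac{\widehat\om(r)^p}{\widehat v(r)}\int_0^r\frac{\widehat v(s)}{\widehat\om(s)^p(1-s)}\,ds<\infty,
\]
that is, as condition \eqref{eq:(d)}. This step is immediate from $\om,v\in\R$: both the outer factor $\om(r)^p(1-r)^{p-1}/v(r)$ and the integrand $v(s)/(\om(s)^p(1-s)^p)$ are asymptotically equivalent to $\widehat\om(r)^p/\widehat v(r)$ and $\widehat v(s)/(\widehat\om(s)^p(1-s))$, respectively, by the elementary identities $\om(r)(1-r)\asymp\widehat\om(r)$ and $v(r)(1-r)\asymp\widehat v(r)$. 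Now Lemma~\ref{Lemma:self-improving} directly yields (c)$\Rightarrow$(d): its conclusion $m=M=p-\kappa_\om/\kappa_v\in(0,p)$ forces $\kappa_\om/\kappa_v<p$. For the reverse implication (d)$\Rightarrow$(c), if $\kappa_\om/\kappa_v<p$, then the computation carried out in \eqref{pkwkv}, applied with $K=0$ and $\e>0$ so small that $\frac{p}{\kappa_\om}-\frac{1}{\kappa_v}-\e(p+1)>0$, gives
\[
\int_0^r\frac{\widehat v(s)}{\widehat\om(s)^p(1-s)}\,ds\lesssim\frac{\widehat v(r)}{\widehat\om(r)^p},\quad r\to 1^-,
\]
which is exactly \eqref{eq:(d)} and hence (c).

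No new analytic difficulty arises: all of the work lives in Theorems~\ref{theorem:projections3} and~\ref{theorem:projections5} and in Lemma~\ref{Lemma:self-improving}. The only place requiring mild care is the $p=1$ step, where one must match the two equivalent forms of the Muckenhoupt condition (with $\om,v$ versus with $\widehat\om,\widehat v$); this is routine under the regularity hypothesis. I therefore expect the write-up to amount to little more than a reference to the three results and the two short comparisons above.
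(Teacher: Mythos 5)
Your proposal is correct and follows essentially the same route as the paper: (a)--(c) are reduced to Theorems~\ref{theorem:projections5} and~\ref{theorem:projections3} after the routine identifications $\om(r)(1-r)\asymp\widehat\om(r)$, $v(r)(1-r)\asymp\widehat v(r)$, (c)$\Rightarrow$(d) comes from Lemma~\ref{Lemma:self-improving}, and (d)$\Rightarrow$(c) from the computation \eqref{pkwkv}. Your choice $K=0$ in \eqref{pkwkv} is admissible precisely because (d) gives $0<p-\kappa_\om/\kappa_v$, and is marginally more direct than the paper's passage through $m\ge p-\kappa_\om/\kappa_v$ and the monotonicity in $\d$; otherwise the two arguments coincide.
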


\begin{proof}
The conditions (a)-(c) are equivalent by Theorems~\ref{theorem:projections5} and~\ref{theorem:projections3}. Further, (c)$\Rightarrow$(d) follows by Lemma~\ref{Lemma:self-improving}.

Finally, assume that (d) is satisfied, that is, $\frac{\kappa_\om}{\kappa_v}<p$, and let us prove (c).
The reasoning in \eqref{pkwkv} yields $m\ge p-\frac{\kappa_\om}{\kappa_v}>0$,
so that
    \begin{equation}\label{xxxx}
    \frac{\widehat{\om}(r)^{p-\e}}{\widehat{v}(r)}\int_0^r\frac{\widehat{v}(s)}{\widehat{\om}(s)^{p-\e}(1-s)}\,ds\lesssim1,\quad r\to1^-,
    \end{equation}
for all
$\e<p-\frac{\kappa_\om}{\kappa_v}$, which in particular implies (c). This finishes the proof.
\end{proof}

\subsection{One weight inequality}

In this section we prove Theorem~\ref{theorem:projections} and deduce Theorem~\ref{th:L1pI} from a result of Shields and Williams.

\medskip

\noindent\emph{Proof of Theorem~\ref{theorem:projections}.}
(i). This follows from Theorem~\ref{theorem:projections3}.

(ii). Let $\phi\in L^\infty(\D)$. Then
    $$
    (P_\om(\phi))'(z)=\int_{\D} \phi(\z)\frac{\partial B^\om(z,\z)}{\partial z}\,\om(\z)dA(\z),
    $$
and hence Theorem~\ref{th:kernelstimate}(ii) gives
    $$
    |(P_\om(\phi))'(z)|\le \|\phi\|_{L^\infty(\D)}\int_{\D} \left|\frac{\partial B^\om(z,\z)}{\partial z}\right|\,\om(\z)dA(\z)\asymp \frac{\|\phi\|_{L^\infty(\D)}}{1-|z|}.
    $$
Since $|P_\om(\phi)(0)|\le C\|\phi\|_{L^\infty(\D)}$ for some constant $C=C(\om)>0$, it follows that $P_\om(\phi)\in \mathcal{B}$ and $\|P_\om(\phi)\|_{\mathcal{B}}\le C\|\phi\|_{L^\infty(\D)}$.

(iii). Let first $p>1$. We assume that $P^+_\om:L^p_\om\to L^p_\om$ is bounded and aim for a contradiction. Write  $K(r)=\int_0^{r}\frac{dt}{\widehat{\om}(t)(1-t)}$ for short, and let $\phi$ be a radial function. Then Theorem~\ref{th:kernelstimate}(i)
 together with Lemma~\ref{Lemma:replacement-Lemmas-Memoirs} show that
    \begin{equation*}
    \begin{split}
    P_\om^+(\phi)(z) &\asymp \int_{0}^1 K(|z|s)\phi(s)\om(s)\,ds\ge
    K(|z|^2)\int_{|z|}^1 \phi(s)\om(s)\,ds\\
    &\asymp K(|z|)\int_{|z|}^1 \phi(s)\om(s) \,ds,\quad |z|\ge\frac12.
    \end{split}
    \end{equation*}
Therefore
    $$
    \|P^+_\om(\phi)\|^p_{L^p_\om}\gtrsim  \int_0^1 \left(K(r)\int_r^1 \phi(s)\om(s)\,ds\right)^p\om(r)\,dr,
    $$
and since we assumed that $P^+_\om:L^p_\om\to L^p_\om$ is bounded, we deduce
    \begin{equation}\label{muck1xxxn}
    \int_0^1 \left(K(r)\int_r^1 \phi(s)\om(s)\,ds\right)^p\om(r)\,dr \lesssim\|\phi\|^p_{L^p_\om},\quad \phi\in L^p_\om.
    \end{equation}
By choosing $\phi_t=\chi_{[t,1)}$, we obtain
\begin{equation*}\begin{split}
\widehat{\om}(t) &\gtrsim \int_0^1 \left(K(r)\int_{\max\{r,t\}}^1\om(s)\,ds\right)^p\om(r)\,dr
\ge \widehat{\om}(t)^p\int_0^t K(r)^p\om(r)\,dr,
\end{split}\end{equation*}
that is,
    \begin{equation}\label{muck7}
    \sup_{0<r<1}\left(\int_0^rK^p(s)\om(s)\,ds\right)
    \widehat{\om}(r)^{\frac{p}{p'}}<\infty.
    \end{equation}
Since $K(r)\gtrsim\widehat{\om}(r)^{-1}$,
we deduce
    $$
    \int_0^rK^p(s)\om(s)\,ds\gtrsim \int_0^r\frac{\om(s)}{\widehat{\om}(s)^p}\,ds\to \infty,\quad r\to 1^-.
    $$
Two applications of the Bernoulli-l'H\^{o}pital
theorem now give
    \begin{equation}\label{pillu}
    \begin{split}
    \liminf_{r\to 1^-}\frac{\int_0^rK^p(s)\om(s)\,ds}{\widehat{\om}(r)^{-\frac{p}{p'}}}
    &\ge \frac{1}{p-1}\liminf_{r\to 1^-} \frac{K^p(r)}{\widehat{\om}(r)^{-p}}\\
    &= \frac{1}{p-1}\left(\liminf_{r\to 1^-} \frac{K(r)}{\widehat{\om}(r)^{-1}}\right)^p\\
    &\ge \frac{1}{p-1}\liminf_{r\to 1^-} \left(\frac{\psi_{\om}(r)}{1-r}\right)^p=\infty.
    \end{split}
    \end{equation}
Therefore \eqref{muck7} is false and consequently, $P^+_\om:L^p_\om\to L^p_\om$ is not bounded.\hfill$\Box$

\medskip

Before proving Theorem~\ref{th:L1pI}, it is worth noticing that the hypotheses of the theorem are satisfied for example, if $\om(r)=(1-r)^{-1}\left(\log\frac{e}{1-r}\right)^{-\a}$, $\a>1$, or more generally,
    \begin{equation*}
    \om(r)=\left((1-r)\prod_{n=1}^{N}\log_n\frac{\exp_{n}0}{1-r}\left(\log_{N+1}\frac{\exp_{N+1}0}{1-r}\right)^\a\right)^{-1},
    \end{equation*}
where $1<\a<\infty$ and $N\in\N$. Here, as usual,
$\log_nx=\log(\log_{n-1}x)$, $\log_1x=\log x$,
$\exp_n x=\exp(\exp_{n-1}x)$ and $\exp_1x=e^x$.

\medskip

\noindent\emph{Proof of Theorem~\ref{th:L1pI}.} By \cite[Lemmas~1.1 and~1.3]{PelRat}, the function
$\Psi$ satisfies
    $$
    \Psi(x)\asymp\frac{1}{\int_0^1 s^x\om(s)\,ds},\quad x\in[0,1).
    $$
Since $\om\in\I$ by the hypothesis, for a given $a>0$, the function $h(r)=\frac{\widehat\om(r)}{(1-r)^a}$ is increasing on $[\rho,1)$ for some $\rho=\rho(a)\in(0,1)$. So $\Psi$ satisfies condition (U) in \cite[p. 5]{ShiWiMich82}.
Therefore we may apply \cite[Lemma~2 and Theorem~3]{ShiWiMich82} with $d\eta(r)=r\om(r)\,dr$ to deduce
that if there were a bounded projection from $L^1_\om$ to $A^1_\om$, then the function
    $$
    x\mapsto\widehat\om\left(1-\frac{1}{x+1}\right)\int_{1/2}^x\frac{dt}{\widehat\om\left(1-\frac{1}{t+1}\right)t}
    $$
would be bounded. But this is impossible as is seen by the change of variable $1-\frac{1}{x+1}=r$ and an application of the Bernoulli-l'H\^{o}pital
theorem similar to that in the last step in \eqref{pillu}.
Thus there are no bounded projections from $L^1_\om$ to $A^1_\om$. \hfill$\Box$

\section{Duality}

Recall that $V_{p'}=V_{p'}(\om,v)=\left(\frac{\om}{v}\right)^{p'}v$.

\begin{proposition}\label{pr:duality}
Let $1<p<\infty$ and $\om\in\R$, and let $v$ be a radial weight. Then the following assertions are equivalent:
\begin{itemize}
\item[\rm(a)] $v\in\R$ and $P_\om: L^p_v\to L^p_v$ is bounded;
\item[\rm(b)] $V_{p'}\in\R$  and $P_\om: L^{p'}_{V_{p'}}\to L^{p'}_{V_{p'}}$ is bounded.
    \end{itemize}
\end{proposition}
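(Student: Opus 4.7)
The strategy is to apply Theorem~\ref{theorem:projections3} twice, exploiting an algebraic involution that links $v$ to $V_{p'}$. The key identity is that with $W=V_{p'}(\om,v)=(\om/v)^{p'}v=\om^{p'}v^{1-p'}$, one has
\begin{equation*}
V_p(\om,W)=(\om/W)^pW=\om^pW^{1-p}=\om^p\om^{p'(1-p)}v^{(1-p')(1-p)}=v,
\end{equation*}
using $p'(1-p)=-p$ and $(1-p')(1-p)=1$, both consequences of $\tfrac{1}{p}+\tfrac{1}{p'}=1$. Consequently, the data $(p,v)$ and $(p',V_{p'})$ play perfectly symmetric roles, which gives the two directions of the proposition an identical structure.

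For (a)$\Rightarrow$(b), assume $v\in\R$ and $P_\om:L^p_v\to L^p_v$ is bounded. The equivalence (b)$\Leftrightarrow$(c) in Theorem~\ref{theorem:projections3}, combined with the observation recorded in its proof that under $\om,v\in\R$ condition~(c) is equivalent to $V_{p'}\in\R$, immediately yields $V_{p'}\in\R$. Now $\om$ and $V_{p'}$ are both regular, so Theorem~\ref{theorem:projections3} applies with $p$ replaced by $p'$ and $v$ replaced by $V_{p'}$. The companion weight appearing in its condition~(c) is $V_p(\om,V_{p'})=v$ by the involution above, and $v\in\R$ by hypothesis. Hence $P_\om:L^{p'}_{V_{p'}}\to L^{p'}_{V_{p'}}$ is bounded, proving (b).

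The reverse implication (b)$\Rightarrow$(a) is the mirror image: starting from $V_{p'}\in\R$ and boundedness of $P_\om$ on $L^{p'}_{V_{p'}}$, Theorem~\ref{theorem:projections3} applied in the dual setup $(p\to p',\,v\to V_{p'})$ first forces $V_p(\om,V_{p'})=v\in\R$, and a second application in the original setup then delivers the boundedness of $P_\om$ on $L^p_v$. I do not anticipate any substantive obstacle; the argument is essentially a bookkeeping exercise, in which the only delicate point is verifying that the regularity hypotheses of Theorem~\ref{theorem:projections3} are met at each invocation---precisely what the involutive identity $V_p(\om,V_{p'}(\om,v))=v$ guarantees, together with the hypothesis $\om\in\R$.
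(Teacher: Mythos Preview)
Your proof is correct and follows essentially the same approach as the paper: both rely on the involutive identity $V_p(\om,V_{p'}(\om,v))=v$ (the paper writes this as $v=(\om/V_{p'})^{p}V_{p'}$) together with two applications of Theorem~\ref{theorem:projections3}, first to extract regularity of the companion weight and then to transfer boundedness of $P_\om$ to the dual setup. Your version is simply more explicit about the algebra of the involution and the exponent identities $p'(1-p)=-p$, $(1-p')(1-p)=1$.
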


\begin{proof} (a)$\Rightarrow$(b). If $P_\om: L^p_v\to L^p_v$ is bounded, then $V_{p'}\in\R$ by Theorem~\ref{theorem:projections3}. Moreover, $v=\left(\frac{\om}{V_{p'}}\right)^{p}V_{p'}\in\R$, so $P_\om:L^{p'}_{V_{p'}}\to L^{p'}_{V_{p'}}$ is bounded by Theorem~\ref{theorem:projections3}.

A reasoning analogous to that above gives (b)$\Rightarrow$(a).
\end{proof}

\noindent\emph{Proof of Theorem~\ref{th:duality3}.} (a)$\Rightarrow$(b). Denote $\Lambda_g(f)=\langle f,g\rangle_{A^2_\om}$ for all $f\in A^{p'}_{V_{p'}}$ and $g\in A^p_v$. Then
    \begin{equation*}
    |\Lambda_g(f)|=\left|\langle f,g\rangle_{A^2_\om} \right|\le \int_{\D}|f(z)| |g(z)| \frac{\om(z)}{v(z)}\,v(z)dA(z)\le \|f\|_{A^{p'}_{V_{p'}}}\|g\|_{A^p_v},
    \end{equation*}
so $\Lambda_g\in(A^{p'}_{V_{p'}})^\star$ and $\|\Lambda_g\|\le\|g\|_{A^p_v}$.

Conversely, if $T\in(A^{p'}_{V_{p'}})^\star$, then $T$
can be extended to a bounded linear functional $\widetilde{T}$ on $L^{p'}_{V_{p'}}$ with $\|T\|=\|\widetilde T\|$ by the Hahn-Banach theorem. Now that $(L^{p'}_{V_{p'}})^\star$ is isometrically isomorfic to $L^{p}_{v}$, and hence there exists $h\in L^{p}_v$ such that $T(f)=\langle f,h\rangle_{L^2_\om}$ for all $f\in L^{p'}_{V_{p'}}$, and $\|\widetilde T\|=\|h\|_{L^{p}_v}$.
Since $P^+_\om$ is bounded on $L^{p'}_{V_{p'}}$ by the hypothesis (a), Theorem~\ref{theorem:projections2} and Proposition~\ref{pr:duality}, Fubini's theorem yields
    \begin{equation*}
    T(f)=T(P_\om(f))= \langle f, P_\om(h)\rangle_{A^2_\om}=\Lambda_g(f),\quad f\in A^{p'}_{V_{p'}},
    \end{equation*}
where $g=P_\om(h)\in A^{p}_v$ satisfies $\|g\|_{A^p_v}\le\|P_\om\|\|h\|_{L^{p}_{v}}=\|P_\om\|\|T\|$ by the hypothesis (a). Therefore we have proved (b).

(b)$\Rightarrow$(a). Assume $\left(A^{p'}_{V_{p'}}\right)^\star \simeq A^p_v$ up to an equivalence of norms under the pairing \eqref{pairingom}. Let $h\in L^p_v$, and consider the bounded linear functional $T_h(f)=\langle f,h\rangle_{L^2_\om}$ on $L^{p'}_{V_{p'}}$, with $\|T_h\|=\|h\|_{L^p_v}$.
By Fubini's theorem $T_h(f)=\langle f, P_\om(h)\rangle_{A^2_\om}$ for every polynomial~$f$.
Further, by the hypothesis, there exists $g\in A^{p}_{v}$ such that $T_h(f)=\langle f, P_\om(h)\rangle_{A^2_\om}=\langle f,g\rangle_{A^2_\om}$ for all $f\in A^{p'}_{V_{p'}}$ and $\|T_h\|\asymp\|g\|_{A^{p}_v}$.
So testing on the monomials $\{z^n\}_{n\in\N\cup\{0\}}$, we deduce $P_\om(h)=g$, and hence $\|P_\om(h)\|_{A^{p}_{v}}\asymp\|h\|_{L^{p}_{v}}$.
Therefore $P_\om: L^{p}_v\to L^{p}_{v}$ is bounded.
\hfill$\Box$

\smallskip

\noindent\emph{Proof of Corollary~\ref{th:duality2}.} (i) follows by Theorem~\ref{th:duality3}.

(ii). We begin with showing that each $g\in\B$ induces a bounded linear functional on~$A^1_\om$.
By the polarization of the identity
\eqref{LP1}, Lemma~\ref{Lemma:replacement-Lemmas-Memoirs} and \eqref{LPformula}, we deduce
    \begin{equation*}
    \begin{split}\label{eq:dual1}
    |\langle f,g\rangle_{A^2_\om}|& \lesssim |f(0)||g(0)|+|\langle f',g'\rangle_{A^2_{\om^\star}}|\\
    &\lesssim \|f\|_{A^1_\om}\|g\|_{\B}+\int_{\D}|f'(z)||g'(z)|(1-|z|)^2\om(z)\,dA(z)
   \lesssim \|g\|_{\B}\|f\|_{A^1_\om}.
    \end{split}
    \end{equation*}

Assume next that $L$ is a bounded linear functional on $A^{1}_\om$. By the Hahn-Banach theorem $L$ can be extended
to a bounded linear functional $\widetilde L$ on $L^{1}_\om$ with $\|L\|=\|\widetilde L\|$.
So,
there exists a unique function $h\in L^{\infty}(\D)$ such that
    $$
    \widetilde{L}f=\int_\D  f(z)\overline{h(z)}\om(z)\,dA(z), \quad f\in L^{1}_\om,
    $$
and $\|\widetilde{L}\|=\|h\|_{L^{\infty}(\D)}$. By using the restriction of this identity to functions in $A^1_\om$, and Fubini's theorem we deduce
    \begin{equation*}
    \begin{split}
    L f&=\widetilde{L}f= \lim_{r\to 1^-}\int_\D  f(rz) \overline{h(z)}\om(z)\, dA(z)\\
    &=\lim_{r\to 1^-} \int_\D \left(\int_\D f(r\z) B^\om(z,\z)\om(\z)\, dA(\z) \right)\overline{h(z)}\om(z)\, dA(z)\\
    &=\lim_{r\to 1^-} \int_\D  f(r\z)   \overline{P_\om(h) (\z)}\om(\z)\,dA(\z).
    \end{split}
    \end{equation*}
The first part of the proof implies that this last limit equals to $\langle f,P_\om(h)\rangle_{A^2_\om}$, because $P_\om:L^\infty(\D)\to\B$ is bounded by Theorem~\ref{theorem:projections}(ii). Thus $\|P_\om (h) \|_{\B}\lesssim \|h\|_{L^{\infty}(\D)}=\|\widetilde{L}\|=\|L\|$, and the assertion is proved.\hfill$\Box$


\begin{thebibliography}{SK}

\bibitem{AlCo}          A.~Aleman and O.~Constantin,
                        Spectra of integration operators on weighted Bergman spaces,
                        \textit{J. Anal. Math.} {\bf{109}} (2009), 199--231.

\bibitem{AlPoRe}        A.~Aleman, S.~Pott and M.C. Reguera, Sarason conjecture on the Bergman space,
                        preprint,   \textit{available on http://arxiv.org/abs/1304.1750.}



\bibitem{ArPau}         A.~Arouchi and J.~Pau,
                       Reproducing kernel estimates, bounded projections and duality on large weighted Bergman spaces,
                       to appear in J. Geom. Anal., DOI 10.1007/s12220-014-9513-2.


\bibitem{Asserda-Hichame2014} S.~Asserda and A.~Hichame,
                              Pointwise estimate for the Bergman kernel of the weighted
                              Bergman spaces with exponential type weights,
                              \textit{C. R. Acad. Sci. Paris, Ser. I}, {\textbf{352}} (2014), 13--16.

\bibitem{BB}            D.~Bekoll\'e and A.~Bonami,
                         In\'egalit\'es \'a poids pour le noyau de
                        Bergman,
                        (French) \textit{C. R. Acad. Sci. Paris Sér. A-B} {\bf{286}} (1978), no.~18, 775--778.

\bibitem{B}             D.~Bekoll\'e, In\'egalit\'es \'a poids pour pour le projecteur de Bergman dans la boule unit\'e de $C^n$, \textit{Studia
                        Math.} {\bf{71}} (1981/82), 305--323.

\bibitem{BoMathAnn04}   A.~Borichev, On the Bekoll\'e-Bonami condition, \textit{Math. Ann.} {\bf{328}} (2004), no.~3, 389--398.


\bibitem{CP2}           O. Constantin and J. A. Pel\'aez,
                        Boundedness of the Bergman projection on $L^p$ spaces with exponential weights,
                        to appear in Bull. Sci. Math., DOI: 10.1016/j.bulsci.2014.08.012.

\bibitem{D1}            M. Dostanic, Unboundedness of the Bergman projections on $L^p$ spaces with
                        exponential weights, \textit{Proc. Edinb. Math. Soc.} {\bf{47}} (2004),
                        111--117.

\bibitem{D3}            M. Dostanic, Boundedness of the Bergman projections on $L^p$ spaces with
                        radial weights, \textit{Pub. Inst. Math.}  {\bf{86}} (2009), 5--20.

\bibitem{Duren1970}     P.~Duren, Theory of $H^p$ Spaces, Academic Press, New York-London 1970.


\bibitem{ForRud74Ind}   F.~Forelli and W.~Rudin, Projections on spaces of holomorphic functions on balls,
                        \textit{Indiana Univ. Math. J.}  {\bf{24}} (1974/75), 593--602.



\bibitem{HKZ}           H.~Hedenmalm, B.~Korenblum and K.~Zhu,
                        Theory of Bergman Spaces, Graduate Texts in Mathematics, Vol. 199,
                        Springer, New York, Berlin, etc. 2000.

\bibitem{Muckenhoupt1972} B.~Muckenhoupt, Hardy's inequality with weights, \textit{Studia Math.} {\bf{64}} (1972), 31--38.

\bibitem{Pabook}        M.~Pavlovi\'c, Introduction to function spaces on the
                        Disk, Posebna Izdanja [Special Editions] \textbf{20}, Matemati\v cki
                        Institut SANU, Beograd, 2004.

\bibitem{PavP}          M. Pavlovi\'{c} and J.~A. Pel\'{a}ez,
                       An equivalence for weighted integrals of an analytic function and its derivative.
                        \textit{ Math. Nachr.} {\bf{281}} (2008), no.~11, 1612--1623.

\bibitem{PelRat}        J.~A.~ Pel\'aez and J. R\"atty\"a,
                        Weighted Bergman spaces induced by rapidly increasing weights,
                        \textit{ Mem. Amer. Math. Soc. } {\bf{227}} (2014), no.~1066.

\bibitem{PelRathg}      J.~A.~ Pel\'aez and J. R\"atty\"a,
                        Generalized Hilbert operators on weighted Bergman spaces,
                        \textit{ Adv. Math.}  {\bf{240}} (2013), 227-267.




\bibitem{Scuster-Varolin2013}     A.~Schuster and D.Varolin,
                                  New estimates for the minimal $L^2$ solution of $\bar{\partial}$ and application to geometric
                                  function theory in weighted Bergman spaces,
                                  \textit{J. Reine Angew. Math.} \textbf{2014} n. 691,   (2014), 173–201.


\bibitem{ShiWiMich82}        A.~Shields and D.~Williams,
                             Bounded projections and the growth of harmonic conjugates in the unit disc,
                             \textit{ Michigan Math. J.} {\bf{29}} (1982), no.~1, 3--25.





\bibitem{ZeyTams2012}      Y.~E.~Zeytuncu,  $L^p$ regularity of weighted Bergman projections,
                           \textit{Trans. Amer. Math. Soc.} {\textbf{365}} (2013), no.~6, 2959--2976.






\bibitem{Zhu}           K. Zhu, Operator Theory in Function Spaces,
                        Second Edition, Math. Surveys and Monographs, Vol. 138, American Mathematical
                        Society: Providence, Rhode Island, 2007.

\end{thebibliography}
\end{document}